\pdfoutput=1 

\documentclass[a4paper,oneside,leqno,10pt,final]{article}

\usepackage{a4wide}
\usepackage[utf8]{inputenc}
\usepackage[a4paper]{geometry}

\usepackage{titlesec}

\usepackage{bm}

\usepackage{enumitem}
\setlist{itemsep=3pt,parsep=1pt,topsep=3pt,partopsep=1pt}
\setlist[enumerate,1]{label=(\arabic*)}
\setlist[enumerate,2]{label=(\alph*)}

\usepackage[dvipsnames]{xcolor}

\usepackage{graphicx}

\usepackage{mathtools}
\mathtoolsset{showonlyrefs}

\usepackage[only,llbracket,rrbracket]{stmaryrd}

\usepackage[T1]{fontenc}

\usepackage[p,osf]{scholax}
\usepackage{amsmath,amsthm}
\usepackage[scaled=1.075,ncf,vvarbb]{newtxmath}

\usepackage{chngcntr}

\usepackage{url}
\usepackage[colorlinks=true,linkcolor=blue,citecolor=Violet,linktocpage=false]{hyperref}

\usepackage{authoraftertitle}

\swapnumbers

\theoremstyle{plain}
\newtheorem{theorem}{Theorem}[section]
\newtheorem{lemma}[theorem]{Lemma}
\newtheorem{corollary}[theorem]{Corollary}

\newtheorem*{theorem*}{Theorem}
\newtheorem*{corollary*}{Corollary}

\theoremstyle{definition}
\newtheorem{definition}[theorem]{Definition}

\newtheorem{miniremark}[theorem]{}

\newtheorem*{notation*}{Notation}

\theoremstyle{remark}

\swapnumbers

\newtheoremstyle{newremark}  
{}                           
{}                           
{}                           
{}                           
{}                           
{.}                          
{ }                          
{{\bfseries \thmnumber{#2}}{\itshape \thmname{ #1}}\thmnote{ (#3)}} 

\theoremstyle{newremark}
\newtheorem{remark}[theorem]{Remark}

\newcommand{\End}[1]{ \mathrm{End}({#1}) }

\newcommand{\ccspace}[1]{\mathscr{K}(#1)}

\newcommand{\transpose}[1]{{#1}^{*}}

\newcommand{\project}[1]{{#1}_\natural}

\newcommand{\eqproject}[1]{({#1})_\natural}

\newcommand{\perpproject}[1]{{#1}_{\natural}^{\perp}}

\newcommand{\pgrass}[2]{\mathbf{G}_{\natural}(#1,#2)}
\newcommand{\grass}[2]{\mathbf{G}(#1,#2)}

\newcommand{\measureball}[2]{{#1}\,{#2}}

\newcommand{\Var}[1]{\mathbf{V}_{#1}}     
\newcommand{\RVar}[1]{\mathbf{RV}_{#1}}   
\newcommand{\IVar}[1]{\mathbf{IV}_{#1}}   
\DeclareMathOperator{\VarTan}{VarTan} 
     
\newcommand{\var}[1]{\mathbf{v}_{#1}}     

\newcommand{\adim}{n}
\newcommand{\vdim}{k}
\newcommand{\codim}{n-k}

\newcommand{\oball}[2]{\mathbf{U}(#1,#2)}

\newcommand{\cball}[2]{\mathbf{B}(#1,#2)}

\newcommand{\sphere}[1]{\mathbb{S}^{#1}}

\ifdefined\textint
    \renewcommand{\textint}[2]{{\textstyle\int_{#1}^{#2}}}
\else    
    \newcommand{\textint}[2]{{\textstyle\int_{#1}^{#2}}}
\fi

\ifdefined\textfint
    \renewcommand{\textfint}[2]{{\textstyle\fint_{#1}^{#2}}}
\else    
    \newcommand{\textfint}[2]{{\textstyle\fint_{#1}^{#2}}}
\fi

\ifdefined\textsum
    \renewcommand{\textsum}[2]{{\textstyle\sum_{#1}^{#2}}}
\else  
    \newcommand{\textsum}[2]{{\textstyle\sum_{#1}^{#2}}}
\fi

\ifdefined\textprod
  \renewcommand{\textprod}[2]{{\textstyle\prod_{#1}^{#2}}}
\else  
  \newcommand{\textprod}[2]{{\textstyle\prod_{#1}^{#2}}}
\fi

\newcommand{\natp}{\mathscr{P}}
\newcommand{\nat}{\natp \cup \{0\}}

\newcommand{\integers}{\mathbf{Z}}

\newcommand{\R}{\mathbf{R}}

\newcommand{\CF}[1]{ \raisebox{\depth}{$\boldsymbol{\chi}$}_{#1} }

\newcommand{\Lploc}[1]{\mathbf{L}_{#1}^{\mathrm{loc}}}

\DeclareMathOperator{\dist}{dist}

\newcommand{\mcv}[1]{ \mathbf{h}_{{#1}} }

\newcommand{\wmcv}[1]{ \overline{\mathbf{h}}_{{#1}} }

\newcommand{\HM}{\mathscr{H}}

\newcommand{\density}{\boldsymbol{\Theta}}

\newcommand{\unitmeasure}[1]{\boldsymbol{\alpha}(#1)}

\newcommand{\restrict}{ \mathop{ \rule[1pt]{.5pt}{6pt} \rule[1pt]{4pt}{0.5pt} }\nolimits }

\newcommand{\ud}{\ensuremath{\,\mathrm{d}}}

\newcommand{\uD}{\ensuremath{\mathrm{D}}}
\newcommand{\Der}{\uD}

\DeclareRobustCommand{\rchi}{{\mathpalette\irchi\relax}}
\newcommand{\irchi}[2]{\raisebox{\depth}{$#1\chi$}}

\newcommand{\id}[1]{\bm{1}_{#1}}

\newcommand{\lIm}{ [ }
\newcommand{\rIm}{ ] }

\newcommand{\scale}[1]{\boldsymbol{\mu}_{#1}}
\newcommand{\trans}[1]{\boldsymbol{\tau}_{#1}}

\newcommand{\tbwedge}{{\textstyle \boldsymbol{\bigwedge}}}

\newcommand{\tbcup}{{{\textstyle \bigcup}}}

\newcommand{\Clos}[1]{\mathop{\mathrm{Clos}}#1}

\newcommand{\VF}{\mathscr{X}}

\DeclareMathOperator{\trace}{trace}

\DeclareMathOperator{\Hom}{Hom}

\newcommand{\Bdry}{\partial}

\DeclareMathOperator{\lin}{span}

\newcommand{\cnt}[1]{\mathscr{C}^{#1}}

\newcommand{\orthproj}[2]{\mathbf{O}^\ast({#1},{#2})}

\DeclareMathOperator{\Int}{Int}

\newcommand{\grad}{\nabla}

\DeclareMathOperator{\spt}{spt}

\DeclareMathOperator{\Tan}{Tan}

\DeclareMathOperator{\Nor}{Nor}

\DeclareMathOperator{\Lip}{Lip}

\DeclareMathOperator{\sgn}{sgn}

\newcommand{\without}{\mathbin{\smallsetminus}}

\DeclareMathOperator{\im}{im}

\renewcommand{\adim}{{n+1}}
\renewcommand{\vdim}{n}
\renewcommand{\codim}{1}

\makeatletter
\newcommand*{\nsubsection}[1]{%
  \subsection*{#1}%
  \addcontentsline{toc}{subsection}{#1}
  \NR@gettitle{#1}%
}
\makeatother

\makeatletter
\let\c@equation\c@enumi

\makeatother

\counterwithin*{equation}{theorem}
\counterwithin*{enumi}{theorem}

\title{Perpendicularity and Locality \\for Codimension-One 
  Varifolds\\ with Bounded Anisotropic Mean Curvature}

\author{Sławomir Kolasiński \and Mario Santilli}

\hypersetup{
  unicode=true,
  pdfauthor={\MyAuthor},
  pdftitle={\MyTitle},
  pdfsubject={},
  pdfkeywords={},
  pdfproducer={},
  pdfcreator={}
  pdfinfo={
    orcid_SK={0000-0002-7122-7275},
    orcid_MS={0000-0002-9774-9776},
  }
}

\begin{document}

\maketitle
\tableofcontents

\begin{abstract}
 Suppose $ F $ is an integrand associated with a uniformly convex $ \cnt{3} $-norm, and $ V $ is a $ n $-dimensional varifold in an open subset of $ \R^{n+1} $ such that  $ \HM^n \restrict \spt \| V \| $ is absolutely
continuous with respect to $ \| V \| $ and  the  mean $ F $-curvature  $ \mcv{F}(V, \cdot) $ is bounded in $ L^\infty $. In our previous result \cite{KolSan2} we prove that $ \spt \| V \| $ is $ \cnt{2} $-rectifiable and the $ \cnt{1} $-regular part $ M $ of $ \spt \| V \| $ coincides $ \HM^n $ almost everywhere with the unit-density stratum of $ V $. In this paper we prove that $ \mcv{F}(V,a) \in \Nor(M,a) $ for $ \HM^n $ a.e.\ $ a \in M $ and that $ \mcv{F}(V, \cdot) $ agrees with the approximate  mean $ F $-curvature coming from the $ \cnt{2} $-rectifiable covering of $ M $. These results provide anisotropic extensions of well known theorems in the Euclidean setting by Brakke, Sch\"atzle and Ambrosio-Masnou. 
\end{abstract}

\paragraph*{\small MSC-classes 2020:}{53A35, 49Q20}

\section{Introduction}
\label{sec:intro}

In our previous paper~\cite{KolSan2} we prove quadratic flatness and $ \cnt{2} $-rectifiability for
the support of $ n $-dimensional varifolds on open subsets of $ \R^{\adim} $ with bounded
anisotropic mean curvature, under the additional hypothesis that the support has locally finite
$\HM^{\vdim}$~measure. In case the varifold $ V $ is integral and satisfies the \emph{absolute
  continuity hypothesis}, i.e., $ \HM^{\vdim} \restrict \spt \| V \| \ll \| V \| $, this allows to
conclude, in combination with Allard's anisotropic regularity theorem \cite{Allard1986}, that the
unit density layer of~$V$ is a $ \cnt{1, \alpha} $ manifold; cf.~\cite[Theorem~1.2]{KolSan2}. The
present paper explores a fundamental consequence of this result, by establishing perpendicularity of
the anisotropic mean curvature vector on this unit-density layer, and compatibility with the
approximate second order structures (i.e.\ locality of the mean curvature vector).

\begin{theorem}
    \label{main:locality}
    Let $ \phi $ be a uniformly convex $ \cnt{3} $-norm on $ \R^{\adim} $ and let
    $ F : \grass{n+1}{n} \rightarrow \mathbf{R} $ be the elliptic integrand naturally associated
    with $ \phi $ (cf.~\ref{mr:anisotropic_mean_curvature_vector}). Suppose
    $\Omega \subseteq \R^{\adim} $ is open, $ 0 \leq H < \infty $ and $V $ is a
    $ \vdim $-dimensional integral varifold in $ \Omega $ such that
    \begin{gather}
        \text{$ \| \delta_F V \| \leq H \| V \| $}\\
        	\text{and $ \HM^{\vdim} \restrict \spt \| V \| $ is absolutely continuous with respect to $ \| V \| $} \,.
         \end{gather}
     Define $ Q = \spt \| V \| \cap \bigl\{ a  : \density^{\vdim}(\| V \|, a) = 1 \bigr\} $.
    
  Then  there holds
    \begin{displaymath}
        \mcv{F}(M,a) = \mcv{F}(V,a) \in \Nor(M,a)
        \quad \text{for $\HM^{\vdim}$~almost all $a \in Q \cap M$}\,
    \end{displaymath}
whenever $ M \subseteq \Omega$ is an embedded $\cnt{2}$-hypersurface with $ \HM^{\vdim}(M) < \infty $.
\end{theorem}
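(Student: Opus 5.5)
The plan is to combine the regularity result of \cite{KolSan2} with a blow-up argument at points of $Q \cap M$, and then use the structure of the first variation of $V$ restricted to $M$.

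\textbf{Step 1 (regularity).} By \cite[Theorem~1.2]{KolSan2}, which rests on Allard's anisotropic regularity theorem, there is a relatively open set $R \subseteq \spt\|V\|$ that is a $\cnt{1,\alpha}$ hypersurface with $\HM^{\vdim}(Q \without R)=0$. On $R$, the varifold $V$ is the unit-density varifold $\var{\vdim}(R)$.

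\textbf{Step 2 (local graphs).} Fix $a \in Q \cap M \cap R$ at which $M$ and $R$ share the tangent plane $T$. This holds for $\HM^{\vdim}$ almost every such point, because the approximate tangent planes of two rectifiable sets coincide $\HM^{\vdim}$ almost everywhere on their intersection. Near $a$, write $R$ as the graph of a $\cnt{1,\alpha}$ function $u$ over $T$, and $M$ as the graph of a $\cnt{2}$ function $v$. The distributional equation $\delta_F V = -\mcv{F}(V,\cdot)\|V\|$ with $\mcv{F}(V,\cdot)\in L^\infty$ becomes an elliptic divergence-form equation
\[
\operatorname{div}\bigl(A(\Der u)\bigr) = f,
\]
where $A$ is smooth because $\phi$ is $\cnt{3}$ and uniformly convex, and $f \in L^\infty$. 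The tangential part of $\delta_F V$ must also be accounted for, since for anisotropic integrands the first variation is not purely normal a priori. I would decompose the test vector fields into normal and tangential parts. The tangential part gives an equation that, combined with $W^{2,p}$ estimates ($\Der u \in \cnt{\alpha}$, so the linearised coefficients are H\"older), yields $u \in W^{2,p}$ for all $p<\infty$.

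\textbf{Step 3 (locality).} On the set $\{u=v\}$, for $\HM^{\vdim}$ almost every point (density points of the coincidence set), the second derivatives satisfy $\Der^2 u=\Der^2 v$ almost everywhere. This is the standard locality of Sobolev derivatives: $\Der(u-v)=0$ almost everywhere on $\{u=v\}$, applied twice since $\Der u-\Der v \in W^{1,p}$. Consequently the pointwise expression of $\mcv{F}(V,\cdot)$, computed from $\Der u$ and $\Der^2 u$ through the anisotropic first variation formula, agrees with the classical $\mcv{F}(M,\cdot)$ computed from $v$. The classical one lies in $\Nor(M,a)$ because the anisotropic mean curvature of a $\cnt{2}$ hypersurface is normal. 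This gives both the equality and the perpendicularity.

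\textbf{Main obstacle.} The hardest step is Step 2: showing that the $W^{2,p}$ representation of $\delta_F V$ is genuinely the pointwise normal formula. In particular, the tangential component of the anisotropic first variation must vanish, because it equals the divergence of a term that depends on $\Der u$. I would first try the integration by parts identity $\delta_F\var{\vdim}(R)(g)=\int \langle \mcv{F}(R,\cdot), g\rangle \ud \HM^{\vdim}$, valid for $W^{2,p}$ graphs by approximation with smooth functions.
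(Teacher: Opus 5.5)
Your proposal is correct, but it takes a genuinely different route from the paper.

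\textbf{The paper's route.} The paper never improves the regularity of the unit-density layer beyond $\cnt{1,\alpha}$.
\begin{enumerate}
\item It uses the two-sided touching balls of \cite[Theorem~1.1]{KolSan2} to get height decay of order $r^{n+4}$.
\item It turns this into quadratic tilt decay by the Caccioppoli inequality (Lemma~\ref{lem:tilt-height}, Corollary~\ref{cor:tilt-decay}).
\item It uses the $\cnt{1,\alpha}$ regularity of \cite[Theorem~1.2]{KolSan2} only to make the bad set in Theorem~\ref{thm:mcv-orto} empty.
\item It proves perpendicularity by a blow-up of $\delta_F V$, after straightening the anisotropy at the point with the linear map $\varphi_T$ of~\ref{mr:def_varphi_T}.
\item It obtains $\mcv{F}(V,a)=\mcv{F}(M,a)$ by comparing $\delta_F V$ with $\delta_F(d\,\var{n}(M))$ on the test fields $\zeta(|z-a|/r)\grad\phi(\nu)$, with the error controlled by tilt decay (Theorem~\ref{thm:locality}).
\end{enumerate}

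\textbf{Your route.} You use vertical variations to get the Euler--Lagrange equation $\operatorname{div}\grad\Phi(\Der u)=f$, with $f\in L^\infty$, for the nonparametric integrand $\Phi(p)=\phi(-p,1)$ (coordinates with $T=\R^n\times\{0\}$). You then upgrade $u$ to $W^{2,p}_{\mathrm{loc}}$ for all $p<\infty$, and everything becomes pointwise. Approximating $u$ by smooth functions in $W^{2,p}\cap\cnt{1}$ gives $\delta_F V(g)=-\int h_u\bullet g\ud\|V_F\|$, where $h_u$ is normal and linear in $\Der^2u$ with coefficients continuous in $\Der u$. Perpendicularity then follows from uniqueness of the density of $\delta_F V$. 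Locality follows from $\Der^2u=\Der^2v$ a.e.\ on $\{u=v\}$.

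\textbf{What each buys.} Your argument is shorter and needs only \cite[Theorem~1.2]{KolSan2}; it uses neither quadratic flatness nor the Caccioppoli inequality. The paper's argument is built from Theorems~\ref{thm:mcv-orto} and~\ref{thm:locality}, which hold for general integral varifolds at points of arbitrary integer density. Theorem~\ref{thm:mcv-orto} moreover holds for general $\cnt{1}$ integrands in any codimension. This isolates the bad-set decay as the only missing ingredient at higher-density points, where a single-graph PDE argument does not apply.

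\textbf{Corrections to your write-up.}
\begin{enumerate}
\item The scalar equation, and hence the regularity, comes from the vertical (normal) variations, not from ``the tangential part''. The tangential component is handled only afterwards, by the approximation identity.
\item Reaching $W^{2,p}$ needs an intermediate step. Difference quotients first give $u\in W^{2,2}_{\mathrm{loc}}$. Only then can you differentiate the equation and apply $L^p$ theory to $\operatorname{div}\bigl(\Der^2\Phi(\Der u)\,\Der\partial_k u\bigr)=\partial_k f$. Its coefficients are H\"older because $\phi\in\cnt{3}$; note that $\grad\Phi$ is $\cnt{2}$, not smooth.
\item In the paper's normalization, $\delta_F V=-\mcv{F}(V,\cdot)\,\|V_F\|$, not $-\mcv{F}(V,\cdot)\,\|V\|$.
\end{enumerate}
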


If $ M \subseteq \Omega $ is a $ n $-dimensional $ \cnt{2} $-hypersurface, then an explicit formula
for the mean $ F $-curvature $ \mcv{F}(M, \cdot) $ of $ M $ (which in particular implies that
$ \mcv{F}(M, \cdot) $ is normal to $ M $) has been computed in \cite[Proposition
2.1]{DePhilippis2019} employing a differential-geometric approach that relies on the
$ \cnt{2} $-regularity. Under the hypothesis of Theorem \ref{main:locality}, the set $ Q $ is proved
to be only $ \cnt{1, \alpha} $-regular almost everywhere, hence no differential-geometric approach
is applicable here.

The main and most subtle part of Theorem \ref{main:locality} is contained in the assertion about the
perpendicularity of the mean curvature vector $ \mcv{F}(V, \cdot) $. If $ F $ is the integrand
associated with the Euclidean norm, perpendicularity of the mean curvature vector, for $ \| V\| $
almost all points, was proved by Brakke in~\cite[5.8]{Brakke1978} for a general integral varifold
$ V $ whose Euclidean first variation $ \delta V $ is representable by integration (see also
\cite[Remark 4.9]{Menne13}). One ingredient of Brakke's proof is the height decay proved in
\cite[5.7]{Brakke1978}. In the anisotropic setting, the quadratic flatness proved \cite{KolSan2}
provides an optimal quadratic height decay for varifolds satisfying the hypothesis of Theorem
\ref{main:locality}.  On the other hand, Brakke's proof employs the Euclidean structure in a crucial
way, in particular the Pythagorean theorem, to carry on some of the fundamental estimates (cf.\
Remark \ref{rem:perp:comparison_to_Brakke}). Obviously, the latter is an important missing
ingredient in the anisotropic setting. To circumvent this issue, our~proof involves the new idea
that one needs to \emph{locally} straighten the anisotropic structure around a point $ b $ through
a~linear map $ \varphi_T $, depending on the approximate tangent plane $ T = \Tan^k(\| V \|,b) $ of
$ \| V \| $; see~\ref{mr:def_varphi_T}. Concretely, this idea translates to studying the first
variation with respect to the integrand $ G $ of the pushforward $ (\varphi_T^{-1})_\# V $ of $ V $,
where $ G = \varphi_T^\# F $ is given by the pull-back of $ F $ by the linear map $ \varphi_T $. A
subtle technical part of this argument requires a special care to deal with the Lebesgue points of
the mean curvature vector of the pulled-back varifold $ (\varphi_T^{-1})_\# V $; see
Lemma~\ref{lem:pull_back:Lebesgue_points}. We remark that part of this argument actually holds for a
general $ \cnt{1} $ integrand of arbitrary codimension; cf.\ Theorem \ref{thm:mcv-orto}.

Relying on perpendicularity and quadratic flatness, we can prove that the distributional mean curvature $ \mcv{F}(V, \cdot) $ agrees $ \HM^n $ almost everywhere with the mean $ F $-curvature vectors of the $ \cnt{2} $-rectifiable cover of $ M $. This kind of result is known as \emph{locality of the mean curvature}
(see~\ref{thm:locality}), and it is proved by adapting the ideas from \cite{Ambrosio2003}, where locality in the Euclidean setting is proved by exploiting in a crucial way the quadratic tilt-excess decay by Sch\"atzle \cite{Schatzle04}. See also \cite{Schatzle09}  and \cite{Menne13} for more general locality theorems in the Euclidean setting. It might be interesting to remark that the anisotropic extension of the locality theorem still exploits a few peculiarities of
the anisotropic first variation in an essential way;
in~particular~\ref{rem:P_projection}.

Concerning the restriction to the unit-density layer in Theorem~\ref{main:locality} we remark that
perpendicularity of the \emph{Euclidean} mean curvature vector also works at higher density
points. This is a fundamental consequence of the Euclidean multivalued Lipschitz approximation,
where fast decay estimates are guaranteed by the monotonicity formula. However, in the anisotropic
case the situation becomes far more difficult, since monotonicity formulas are notoriously not
available, and the only available Lipschitz approximation theorem~\cite[\S{2.6}]{Allard1986} seems
to provide too weak estimates on the set not covered by the graph of the constructed Lipschitzian
map; see~\ref{rem:perp:bad_set_estimates}. Such a set is obviously empty if we restrict on the
unit-density layer as, by the aforementioned regularity theorem, this layer is almost covered by one
single $ \cnt{1, \alpha} $ manifold; see \ref{rem:perp:bad_set_estimates}. It remains an open problem to prove perpendicularity at the higher-density points.

\section{Preliminaries}
\label{sec:prelim}

\nsubsection{Notation}
\label{sec:notation}

\begin{miniremark}
    \label{mr:basic_notation}
    In~principle, but with some exceptions, we follow the notation of Federer;
    see~\cite[pp.~669--671]{Federer1969}. For varifolds we follow the notation of \cite{Allard1972}.
    
    We write $\natp$ for the set of positive integers but $A \without B$ for the set-theoretic
    difference.
    
    Whenever $L \in \Hom(X,Y)$ we write $L(x)$, $L x$, or $\langle x ,\, L \rangle$ to denote the
    value of~$L$ on~$x$. In~case $S \subseteq \R^{\adim}$ the closure in~$\R^{\adim}$ of~$S$ is
    $\Clos{S}$, the interior is denoted $\Int S$, and its topological boundary in~$\R^{\adim}$
    is~$\Bdry S$. The~characteristic function of $S \subseteq \R^{\adim}$ is $\CF{S} $. Given any
    set $X$ the identity map on~$X$ is the function $\id{X} : X \to X$.  We~write $\cball ar$ and
    $\oball ar$ for the closed and open ball of radius $0 < r < \infty$ and centre~$a$ (in the
    metric space that~$a$ belongs to, which should be clear from the context). If $X$ is a real
    vectorspace, $v \in X$, and $r \in \R$, the translation $\trans{v} : X \to X$ and dilation
    $\scale{r} : X \to X$ are defined by $\trans{v}(x) = v+x$ and $\scale{r}(x) = rx$ for $x \in X$.
    Whenever $X$ is a locally compact Hausdorff space we write $\ccspace{X}$ for the space
    consisting of all compactly supported continuous functions mapping $X$ to $\R$. Whenever $M$ is
    a~smooth submanifold of $\R^{\adim}$ we define
    \begin{displaymath}
        \VF(M) = \cnt{\infty}(M,\R^{\adim}) \cap \bigl\{
        g : \spt g \text{ is compact} ,\,
        g(x) \in \Tan(M,x) \text{ for } x \in M
        \bigr\} \,.
    \end{displaymath}
    Let $k,l \in \natp$ and $k \le \adim$. A set $\Sigma \subseteq \R^{\adim}$ is called
    \emph{countably $(\HM^k,k)$~rectifiable of class $\cnt{l}$} if there exists a~countable family
    $\mathcal{A}$ of~submanifolds of~$\R^{\adim}$ of~dimension~$k$ and class~$\cnt{l}$ such that
    $\HM^k(\Sigma \without \tbcup \mathcal{A}) = 0$. If, additionally, $\HM^{k}(\Sigma) < \infty$,
    we say that $\Sigma$ is~\emph{$(\HM^k,k)$~rectifiable of class $\cnt{l}$}. In~case $l=1$, we
    omit ``of class $\cnt{l}$''.

    Given $ \mu_1 $ and $ \mu_2 $ Radon measures, the derivative of $ \mu_1 $ with respect to
    $ \mu_2 $ is the $ \R $-valued $ \mu_2 $-measurable function defined by the formula
    \begin{displaymath}
        \mathbf{D}(\mu_1, \mu_2, x) = \lim_{r \to 0} \frac{\mu_1(\cball{x}{r})}{\mu_2(\cball{x}{r})}; 
    \end{displaymath}
    cf.~\cite[2.8 and 2.9]{Federer1969}. If $ \mu $ is a measure over a normed vectorspace $ X $
    and $ k \in \natp $ then
    \begin{displaymath}
        \Tan^k(\mu,a) 
    \end{displaymath}
    denotes the cone of $ (\mu, k) $ approximate tangent vectors at $ a $;
    cf. \cite[3.2.16]{Federer1969}. If $ X = \R^{\adim} $ and $ \mu = \HM^k \restrict \Sigma $,
    where $ \Sigma $ is $(\HM^k, k) $-rectifiable, then $ \Tan^k(\HM^k \restrict \Sigma, a) $ is a
    $ k $-dimensional vectorsubspace of $ \R^{\adim} $ for $ \HM^k $ a.e.\ $ a \in \Sigma $; cf.\
    \cite[3.2.19]{Federer1969}.
\end{miniremark}

\begin{miniremark}
    \label{mr: inner products and norms}
    Whenever $ X $ is a finite dimensional vector space endowed with an inner product $ \bullet $,
    then we denote by $|\cdot|$ the associated Euclidean norm. Given a map $A \in \End{X}$ we~define
    \begin{displaymath}
        |A| = \trace (A^* \circ A)^{1/2}\quad \textrm{and} \quad \|A\| = \sup \{|Ax| : x \in X ,\, |x| \le 1 \}. 
    \end{displaymath}
    Note that if $\dim \im A = 1$, then $|A| = |Ax| = \|A\|$ whenever $ x \in (\ker A)^\perp $ and
    $ |x|=1 $.
    
    We recall that an inner product $ \bullet $ on a finite dimensional vector space $ V $ induces
    an inner product $ \bullet $ on $ \tbwedge_{k}V $ for every $ k \geq 1 $; cf.~\cite[1.7.5 and
    1.7.6]{Federer1969}.
\end{miniremark}

\begin{miniremark}  
    For $k,m \in \natp$ with $0 \le k \le m$ we~denote by $ \grass mk $ the compact \emph{Grassmann
      manifold} of $ k $-dimensional planes in $ \R^m $. Following Almgren's convention, for
    $ T \in \grass mk $ we write $ \project{T} : \R^m \rightarrow \R^m $ to denote the orthogonal
    projection onto $ T $. The map
    \begin{displaymath}
        \bigl[ T \mapsto \project{T} \bigr] : \grass{m}{k} \to \End{\R^{m}}
    \end{displaymath}
    is a smooth embedding of $ \grass{m}{k} $ into $ \End{\R^m} $, whose image is the compact
    submanifold $ \pgrass{m}{k} $ of $ \End{\R^m} $,
    \begin{displaymath}
        \pgrass{m}{k} =	\End{\R^{m}} \cap \bigl\{ P : P \circ P = P ,\, P^* = P ,\, \trace P = k \bigr\} \,.
    \end{displaymath}
    As customary, we shall frequently identify $\grass mk$ with $\pgrass{m}{k}$. Recall
    (e.g.~\cite[Appendix A]{Philippis2018}) that
    \begin{multline}
        \Tan(\grass{m}{k}, T) = \End{\R^m} \cap \bigl\{
        L : L^\ast = L, \;
        \project{T} \circ L \circ \project{T} = \perpproject{T} \circ L \circ \perpproject{T} = 0
        \bigr\}
        \\ \text{for $ T \in \grass{m}{k} $} \,. 
    \end{multline}
\end{miniremark}

\begin{lemma}
    \label{mr:jacobian_multiplicative}
    Suppose $ V $ is an inner product space, $\dim V = n < \infty$, $ M, L \in \End{V} $,
    $ \dim \im L \leq k $ and $ S = \project{(\im L)} $.

    Then $ 	|\tbwedge_{k}( M \circ L)| = | \tbwedge_{k} (M \circ S) | \cdot | \tbwedge_{k} L | $.
\end{lemma}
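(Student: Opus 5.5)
The plan is to factor $L$ through the projection $S$ onto its image, and then read both sides through the functoriality of $\tbwedge_k$ and the norm it induces on $\tbwedge_k V$. Since $S$ is the orthogonal projection onto $\im L$, we have $S \circ L = L$. Hence $M \circ L = (M \circ S)\circ L$, and therefore
\begin{displaymath}
    \tbwedge_k(M\circ L) = \tbwedge_k(M\circ S)\circ \tbwedge_k L .
\end{displaymath}
The claim thus reduces to showing that composing with $\tbwedge_k(M\circ S)$ multiplies the norm by exactly $|\tbwedge_k(M\circ S)|$ on the relevant range. Throughout, $|\cdot|$ denotes the Hilbert--Schmidt norm on $\End{\tbwedge_k V}$ induced by the inner product of~\ref{mr: inner products and norms}.

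First I dispose of the degenerate case. If $\dim \im L < k$, then $\tbwedge_k L = 0$, so both sides vanish. So assume $\dim \im L = k$, and put $W = \im L$, so that $S = \project{W}$. Then $\tbwedge_k W$ is one-dimensional, spanned by a unit $k$-vector $\xi$, and every $\tbwedge_k L(\eta)$ lies in $\lin\{\xi\}$. It follows that $\tbwedge_k L$ has rank at most one, with $\tbwedge_k L(\eta) = \lambda(\eta)\,\xi$ for a linear functional $\lambda$. By the remark in~\ref{mr: inner products and norms} for rank-one maps, $|\tbwedge_k L| = |\lambda|$, the norm of $\lambda$ as a covector.

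Next I treat $\tbwedge_k S = \tbwedge_k \project{W}$. It is the orthogonal projection of $\tbwedge_k V$ onto $\lin\{\xi\}$. To see this, take an orthonormal basis of $V$ adapted to $W \oplus W^\perp$. Then $\tbwedge_k\project{W}$ maps the induced orthonormal basis of $\tbwedge_k V$ to itself on the single element $\xi$ and kills every other basis vector. Consequently $\tbwedge_k(M\circ S) = \tbwedge_k M\circ \tbwedge_k S$ is the rank-one map $\eta \mapsto (\eta\bullet\xi)\,\tbwedge_k M(\xi)$, so $|\tbwedge_k(M\circ S)| = |\tbwedge_k M(\xi)|$, again by the rank-one remark.

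Finally I compose:
\begin{displaymath}
    \tbwedge_k(M\circ L)(\eta) = \lambda(\eta)\,\tbwedge_k M(\xi).
\end{displaymath}
This is a rank-one map (or zero) of the form $u\otimes \lambda$ with $u = \tbwedge_k M(\xi)$. Its Hilbert--Schmidt norm is $|u|\,|\lambda| = |\tbwedge_k M(\xi)|\,|\tbwedge_k L|$, which is the claimed identity.

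The only point needing care is that the norms in the statement are the trace norms $|\cdot|$ on $\End{\tbwedge_k V}$, and that the rank-one factorisation of $\tbwedge_k S$ holds with respect to the induced inner product on $\tbwedge_k V$. Once orthonormality of the induced basis from~\cite[1.7.5]{Federer1969} is invoked, everything else is routine, so I expect no real obstacle.
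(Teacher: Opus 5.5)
Your proof is correct and takes essentially the paper's route: both arguments use that $\tbwedge_k(\im L)$ is a line, so every exterior power involved has rank at most one, and then read off the norms through the rank-one remark in~\ref{mr: inner products and norms}. The only differences are cosmetic. The paper splits off the case $\dim \im (M\circ L) < k$ and evaluates at the unit $k$-vector built from an orthonormal basis of $(\ker L)^\perp$. You instead split off $\dim \im L < k$ and write the explicit factorisations $\tbwedge_k L = \xi \otimes \lambda$ and $\tbwedge_k(M\circ S) = \tbwedge_k M(\xi) \otimes \xi$.
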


\begin{proof}
    If $ \dim \im (M \circ L) < k $, then $ \dim \im (M \circ S) < k $ and both sides of the
    asserted equality equal zero. Henceforth, we assume $ \dim \im (M \circ L) = k $, in which case
    $ \dim \im (M \circ S) = \dim \im L = k $.  We choose an orthonormal basis $ v_1, \ldots, v_n $
    of $ V $ such that $ v_{1}, \ldots, v_k $ spans $ (\ker L)^\perp $ and set
    $ \xi = v_1 \wedge \ldots \wedge v_k $. Then \begin{displaymath} | \xi | = 1, \quad \langle
        \tbwedge_{k} L \xi, \tbwedge_{k} S \rangle = \tbwedge_{k} L \xi \quad \text{and} \quad |
        \tbwedge_{k} L | = | \tbwedge_{k} L \xi |.
    \end{displaymath}
    Defining $\eta = \tbwedge_{k} L \xi / |\tbwedge_{k} L \xi|$, we notice that 
    \begin{displaymath}
        \tbwedge_{k} (M \circ L)(\xi) = | \tbwedge_{k} L \xi |\,\tbwedge_{k} (M \circ S)(\eta)
    \end{displaymath} 
    and, since $\dim \im \tbwedge_{k} (M \circ L) = 1$, the conclusion follows from \ref{mr: inner
      products and norms}
\end{proof}

\begin{miniremark}
    \label{mr:taking_image_of_an_endomorphism}
    Let $R = \End{\R^{m}} \cap \bigl\{ A : \dim \im A = k \bigr\}$ and $f : R \to \pgrass{m}{k}$ be
    given by
    \begin{displaymath}
        f(A) = \eqproject{\im A} \quad \text{for $A \in R$} \,.
    \end{displaymath}
    Let $A \in R$ and choose $p \in \orthproj{m}{k}$ such that $\im p^* = \im A$. Note that
    \begin{displaymath}
        U = R \cap \bigl\{ B : \ker (B \circ p^*) = \{0\} \bigr\}
        = R \cap \bigl\{ B : \tbwedge_{k} (B \circ p^*) \ne 0 \bigr\}
    \end{displaymath}
    is an open neighbourhood of $A$ in~$R$. Observe
    \begin{displaymath}
        f(B) = B \circ p^* \circ \bigl( p \circ B^* \circ B \circ p^* \bigr)^{-1} \circ p \circ B^*
        \quad \text{for $B \in U$} \,.
    \end{displaymath}
    If follows that $ f | U $ is a smooth function. Hence,  \emph{$f$ is smooth and locally Lipschitzian}.
\end{miniremark}

\begin{miniremark}
    \label{mr:Lebesgue_points_and_taking_image}
    Let $X$ and $Y$ be finite dimensional normed vectorspaces, $U \subseteq \R^m$ be open, $\mu$ be
    a~Radon measure over~$U$, $g : U \to X$ be locally $\mu$-summable, and $f : \im g \to Y$ be
    locally Lipschitzian. Assume $g$ maps bounded sets into bounded sets and $x \in U$ is a
    $\mu$-Lebesgue point of $g$. One readily verifies that under these assumptions $x$ is also
    a~$\mu$-Lebesgue point of $f \circ g$.

    In~particular, suppose $R$ and $f$ are as in~\ref{mr:taking_image_of_an_endomorphism},
    $Y = X = \End{\R^{m}}$, $\tau : U \to \pgrass{m}{k}$ is $\mu$-measurable, $x \in U$ is a
    $\mu$-Lebesgue point of $\tau$, and $h : U \to X \cap \{ A : \ker A = \{0\} \}$ is
    continuous. Define $g : U \to X$ by $g(y) = h(y) \circ \tau(y)$ for $y \in U$, and notice that
    $ g $ is locally $\mu$-summable and has bounded image (since $\pgrass{m}{k}$ is compact)
    in~$R$. Then we conclude that $x$ is a~$\mu$-Lebesgue point of~$f \circ g$. Notice that
    \begin{displaymath}
        f(g(y)) = \project{h(y)[\im \tau(y)]} \quad \textrm{for $ y \in U $}  \,.
    \end{displaymath}
\end{miniremark}

\begin{miniremark}
    In the whole paper we assume $\phi$~is a~uniformly convex~$\cnt{2}$-norm on~$
    \R^{\adim}$. Uniform convexity implies that there exists an~\emph{ellipticity constant}
    $ \gamma(\phi) > 0 $ such that
    \begin{displaymath}
        \Der^2 \phi(u)(v,v) \ge \gamma(\phi) |v|^2
        \quad \text{for $ u \in \sphere{\vdim}$ and $ v \in \lin \{ u \}^{\perp} $} \,.
    \end{displaymath}
    For $l \in \nat$ we also define 
    \begin{gather}
        c_{l}(\phi) = \sup \bigl\{
        \|\Der^k \phi(\nu) \| : \nu \in \sphere{\vdim} ,\; k \in \integers ,\; 0 \le k \le l
        \bigr\} \,.
    \end{gather}
    Note that since $\phi$ is $\cnt{2}$-regular away from the origin the constant~$\gamma(\phi)$
    coincides with the constant named ``$\gamma$'' in~\cite[3.1(4)]{Allard1986}.
\end{miniremark}

\nsubsection{Varifolds and their mean curvature}
\label{sec:varifolds}

\renewcommand{\adim}{m}
\renewcommand{\vdim}{k}
\renewcommand{\codim}{m-k}

\begin{miniremark}
    \label{mr:setup_for_integrands}
    Let $0 \le \vdim \le \adim$ be integers and $ \Omega \subseteq \R^{\adim} $ is open. If
    $j \in \natp \cup \{0\}$, then a~\emph{$k$-dimensional $ \cnt{j} $-integrand on $ \Omega $} is a
    non-negative function $F : \Omega \times \grass{\adim}{k} \to \R$ of class $ \cnt{j} $. A
    $ k $-dimensional integrand $ F $ is called \emph{autonomous} if it does not depend on the space
    variable; i.e.\ $ F(x,T) = F(T) $ for $ (x,T) \in \Omega \times \grass{\adim}{k} $.
    
    In this section we suppose
    \begin{gather}
        U, \Omega \subseteq \R^{\adim} \quad \text{are open} \,,
        \quad
        0 \le k \le \adim \quad \text{integer} \,,
        \\
        \pi : \R^{\adim} \times \End{\R^{\adim}} \to \R^{\adim} \,,
        \quad
        \sigma : \R^{\adim} \times \End{\R^{\adim}} \to \End{\R^{\adim}} \,,
        \\
        \pi(x,A) = x 
        \quad \text{and} \quad
        \sigma(x,A) = A
        \quad \text{for $x \in \R^{\adim}$ and $A \in \End{\R^{\adim}}$} \,.
    \end{gather}
\end{miniremark}

\begin{definition}
    \label{def:pull-back-integrand}
    Suppose $\psi : U \to \Omega$ is a smooth diffeomorphism, and
    $ \alpha : \Omega \times \grass{\adim}{k} \rightarrow \R $ is continuous. We define the
    \emph{pull-back of $ \alpha $} by
    \begin{displaymath}
        \psi^{\#}\alpha(y,T) =
        \alpha \bigl( \psi(y), \uD \psi(y) \lIm T \rIm \bigr) \,|\tbwedge_{\vdim}( \uD \psi(y) \circ \project{T})|
        \quad \textrm{for $(y,T) \in U \times \grass{\adim}{k}$} \,.
    \end{displaymath}
\end{definition}

\begin{remark}
    If $ \alpha : \Omega \times \grass{\adim}{k} \rightarrow \R $ is a function of class $ \cnt{j} $
    then $ \varphi^\# \alpha $ is of class $ \cnt{j} $.
\end{remark}

\begin{remark}
    Suppose $ \varphi : U \rightarrow W $ and $ \psi : W \rightarrow \Omega $ are smooth
    diffeomorphisms. Given $ (y, T) \in U \times \grass{\adim}{k} $, we employ
    Lemma~\ref{mr:jacobian_multiplicative} with $ M = \Der \psi(\varphi(y)) $ and
    $ L = \Der \varphi(y) \circ \project{T} $ to verify that
    $ \varphi^\# (\psi^\#\alpha)(y,T) = (\psi \circ \varphi)^\# \alpha (y,T) $.
\end{remark}

\begin{miniremark}[\protect{\cite{Allard1972}}]
    \label{def:varifold}
    A~$k$-dimensional \emph{varifold} in $\Omega$ is a Radon measure over
    $\Omega \times \grass{\adim}{k}$. The space of all $k$-dimensional varifolds in~$\Omega$ is
    denoted $\Var{k}(\Omega)$. This space is endowed with the weak topology;
    cf.~\cite[2.5.19]{Federer1969}. \emph{Rectifiable} and \emph{integral} varifolds are defined as
    in~\cite[3.5]{Allard1972} and corresponding spaces are denoted $\RVar{k}(\Omega)$ and
    $\IVar{k}(\Omega)$ respectively. Whenever $M$ is a $k$-dimensional properly embedded submanifold
    of~$\Omega$ of class~$\cnt{1}$ we associate to it the varifold $\var{k}(M) \in \IVar{k}(\Omega)$
    defined by the formula
    \begin{displaymath}
    	\var{k}(M)(A) = \HM^k \bigl( M \cap \{ x : (x,\Tan(M,x)) \in A \} \bigr)
    	\quad \text{for $A \subseteq \Omega \times \grass{\adim}{k}$} \,.
    \end{displaymath}
    We also recall that if $ V \in \Var{k}(\Omega) $ then for $ \| V \| $ almost all
    $ x \in \Omega $ there exists a probability measure $ V^{(x)} $ on $ \grass{\adim}{k} $ such that
    \begin{displaymath}
        \textint{}{} \alpha(x,S) \ud V(x,S) = \textint{}{} \textint{}{} \alpha (x,S) \ud V^{(x)}(S) \ud \| V \|(x) 
    \end{displaymath}
    whenever $ \alpha \in \ccspace{\Omega \times \grass{\adim}{k}} $. 
\end{miniremark}

\begin{miniremark}
    If $ F $ is a continuous $ k $-dimensional integrand and $ V \in \Var{k}(\Omega) $, it is
    convenient to introduce the following functional
    \begin{displaymath}
        V_F(\alpha) = \textint{}{} \alpha(x,T)F(x,T) \ud V(x,T)
        \quad \text{for $ \alpha \in \ccspace{\Omega \times \grass{\adim}{k}} $} \,.
    \end{displaymath}
\end{miniremark}

\begin{definition}
    Suppose $ \psi : U \rightarrow \Omega $ is a smooth diffeomorphism and $V$ is
    a~$k$-di\-men\-sio\-nal varifold in~$U$. We define the \emph{push-forward, $ \psi_\# V $, of
      $ V $} to be the $ k $-dimensional varifold in $ \Omega $ defined by
    \begin{displaymath}
        \psi_\# V(\alpha) = V(\psi^\#\alpha)
        \quad \text{for every $ \alpha \in \ccspace{U \times \grass{\adim}{k}}$} \,.
    \end{displaymath}
\end{definition}

\begin{miniremark}[\protect{cf.~\cite[2.1(4)]{Allard1986} and~\cite[Appendix~A]{Philippis2018}}]
    \label{rem:BF_as_derivative}
    Suppose $ F $ is a $\vdim$-dimensional $ \cnt{1} $-integrand on $ \Omega $. 
  
    For $L \in \End{\R^{\adim}}$ and define $L_{t} \in \End{\R^{\adim}}$ by
    $L_{t} = \id{\R^{\adim}} + t L$ for $t \in \R$. Referring to~\cite[2.1]{Allard1986}
    and~\cite[Appendix~A]{Philippis2018} we~recall that
    \begin{equation}
        B_{F}(x,T) \bullet L
        = \left. \tfrac{\ud}{\ud t} \right|_{t=0}
        F(x, L_{t} \lIm T \rIm) | \tbwedge_{\vdim} (L_{t} \circ \project{T}) |
    \end{equation}
    for $x \in \Omega$ and $T \in \grass{\adim}{\vdim}$, where $B_{F}(x,T) \in \End{\R^{\adim}}$ is
    characterized by
    \begin{equation*}
        B_{F}(x,T) \bullet L
        = F(x,T) \project{T} \bullet L
        + \bigl\langle
        \perpproject{T} \circ L \circ \project{T}
        + \transpose{(\perpproject{T} \circ L \circ \project{T})} ,\, \uD F(x,T) \circ \transpose{\sigma}
        \bigr\rangle
    \end{equation*}
    for $L \in \End{\R^{\adim}}$. Henceforth, if $V$ is a~$k$-di\-men\-sio\-nal varifold in~$\Omega$,
    $ g \in \VF(\Omega) $, $ h $ is the flow of $ g $ and $ G \subseteq \Omega $ such that
    $ \spt g \subseteq G $ and $ \| V \|(G) < \infty $, then
    \begin{multline}
        \left. \tfrac{\ud}{\ud t} \right|_{t=0} h_{t\#}\bigl(V \restrict G \times \grass{\adim}{\vdim}\bigr)(F)
        \\
        =  \left.\tfrac{\ud}{\ud t} \right|_{t=0} \textint{G \times \grass{\adim}{\vdim}}{}
        F\bigl(h_t(x), \uD h_t(x) \lIm T \rIm \bigr)
        \,|\tbwedge_{\vdim} \uD h_t(x) \circ \project{T}| \ud V(x,T)
        \\
        = \textint{}{} \bigl\langle g(x) ,\, \uD F(x,T) \circ \transpose{\pi} \bigr\rangle
        + B_{F}(x,T) \bullet \uD g(x) \ud V(x,T) \,.
    \end{multline}
    
    As customary we define \emph{first variation of $ V $ with respect to $ F $} as the linear map
    $ \delta_F V : \VF(\Omega) \rightarrow \R $ defined by
    \begin{multline}
        \delta_F V(g)
        = \left. \tfrac{\ud}{\ud t} \right|_{t=0} h_{t\#}\bigl(V \restrict G \times \grass{\adim}{\vdim}\bigr)(F)
        \\ 
        = \textint{}{} \bigl\langle g(x) ,\, \uD F(x,T) \circ \transpose{\pi} \bigr\rangle
        + B_{F}(x,T) \bullet \uD g(x) \ud V(x,T)
    \end{multline}
    whenever $ g $, $ h $ and $ G $ are as above. The \emph{total
   	variation} of~$\delta_FV$ is the largest Borel regular measure
   $\|\delta_F V\|$ over $\Omega$ determined by the requirement that
   \begin{multline}
   	\|\delta_FV\|(U) = \sup \bigl\{
   	\delta_FV(g)
   	: g \in \VF(\Omega) ,\,
   	\spt g \subseteq U ,\,
   	|g| \le 1
   	\bigr\}
   	\\
   	\text{whenever $U \subseteq \Omega$ is open} \,.
   \end{multline}
\end{miniremark}

\begin{miniremark}
    \label{rem:perp:first_variation_pull_back}
    Suppose $\psi : U \rightarrow \Omega $ is a smooth~diffeomorphism, $ \varphi = \psi^{-1} $,
    $ G = \varphi^\# F $ and $ V \in \Var{\vdim}(\Omega)$. For $g \in \VF(\Omega)$, let $h$ be the
    flow of~$g$ and $ \Omega^{\prime}$ is an open set with $ \spt g \subseteq \Omega^{\prime}$, and
    whose closure is a compact subset of $ \Omega $.
    \begin{multline}
        h_{t\#} \bigl( (\psi_\# V) \restrict \Omega' \times \grass{\adim}{k} \bigr) ( G )
        = \bigl( (\psi_\# V) \restrict \Omega' \times \grass{\adim}{k}\bigr) \bigl( (\varphi \circ h_t)^\# F \bigr)
        \\
        = (\varphi \circ h_t \circ \psi)_\#\bigl(
        V \restrict \bigl( \psi^{-1} \lIm \Omega' \rIm \times \grass{\adim}{k}\bigr)
        \bigr)(F) 
    \end{multline}
    so, since $\varphi \circ h_0 \circ \psi = \id{U}$, differentiating in~$t$ at~$0$ yields
    \begin{displaymath}
        \delta_{G} (\psi_\# V)(g)
        = \delta_F (V) ( \langle g \circ \psi ,\, \uD \varphi \circ \psi \rangle) \,.
    \end{displaymath}
\end{miniremark}

\begin{definition}
    \label{def:related_projections}
    For $x \in \Omega$ and $T \in \grass{\adim}{\vdim}$ set
    \begin{displaymath}
        P_{F}(x,T) = B_{F}(x,T)^* / F(x,T)
        \quad \text{and} \quad
        Q_{F}(x,T) = \id{\R^{\adim}} - P_{F}(x,T) \,.
    \end{displaymath}
\end{definition}

\begin{remark}
    \label{rem:P_projection}
    Let $x \in \Omega$ and $T \in \grass{\adim}{\vdim}$. Straightforward computations show that
    \begin{gather}
        \project{T} \circ B_{F}(x,T) = F(x,T) \project{T}
        \quad \text{and} \quad
        \perpproject{T} \circ B_{F}(x,T) = \perpproject{T} \circ B_{F}(x,T) \circ \project{T} \,;
        \\
        \text{hence,} \quad
        P_{F}(x,T) = \project{T} + \project{T} \circ P_{F}(x,T) \circ \perpproject{T}
        = \project{T} +   P_{F}(x,T) \circ \perpproject{T} \,,
        \\
        P_{F}(x,T) \circ P_{F}(x,T) = P_{F}(x,T) \,,
        \quad 
        \im P_{F}(x,T) = T \,,
        \quad \text{and} \quad
        T \cap \ker P_F(x,T) = \{0\}\,.
    \end{gather}
    In~particular, we obtain that $P_F(T)$ and $Q_F(T)$ are projections which are orthogonal only if
    $\uD F(x,T) \circ \sigma^{\ast} = 0$.
\end{remark}

\begin{miniremark}
    \label{rem:integral_representation_of_first_variation}
    As in~\cite[4.3]{Allard1972} if $\|\delta_FV\|$ is Radon, we can represent $\delta_FV$ as
    \begin{multline}
        \delta_F V(g)
        = - \textint{}{} h(x) \bullet g(x) \ud \| V_F \|(x)
        + \textint{}{} \eta(x) \bullet g(x) \ud \| \delta_F V \|_{\mathrm{sing}} (x)
        \\
        = \textint{}{} \eta(x)\bullet g(x) \ud \| \delta_F V \|(x)
        \quad
        \text{for $g \in \VF(\Omega)$}\,,
    \end{multline}
    where $h$ is a~$\|V\|$~measurable $\R^{\adim}$-valued function, $\eta$ is
    some~$\| \delta_F V \|$~measurable $\sphere{\adim-1}$-valued function and
    $\| \delta_F V \|_{\mathrm{sing}}$ is the singular part of~$\| \delta_F V \|$ with respect to
    $\|V\|$.
    
    Employing the theory of symmetric derivation (see \cite[2.8.18, 2.9]{Federer1969} or
    \cite[Theorem 2.22]{AmbrosioFuscoPallara}) we see that the formula
    \begin{displaymath}
        \mathbf{D} \bigl( \| \delta_F V \|, \| V \|, x\bigr)
        = \lim_{r \to 0}\tfrac{ \| \delta_F V \|(\cball{x}{r})}{\| V \|(\cball{x}{r})}
    \end{displaymath}
    defines a real-valued $ \| V \| $-measurable function
    $\mathbf{D} \bigl( \| \delta_F V \|, \| V \|, \cdot \bigr)$ such that
    \begin{gather}
        \| \delta_F V \|_{\mathrm{sing}}
        = \| \delta_F V \| \restrict \bigl\{x :  \mathbf{D} \bigl( \| \delta_F V \|, \| V \|, x\bigr) = \infty \bigr\}
        \\
        \text{and} \quad
        \| \delta_F V \| = \mathbf{D} \bigl( \| \delta_F V \|, \| V \|, \cdot\bigr)\cdot \| V \|
        + \| \delta_F V \|_{\mathrm{sing}} \,.
    \end{gather}
\end{miniremark}

\begin{definition}
    \label{def:wmcv}
    Suppose $V$ is a~$k$-dimensional varifold in~$\Omega$ and $ F $ is a $ \cnt{1} $-integrand on
    $ \Omega $ such that $ \| \delta_F V \| $ is a Radon measure. We denote any
    $ \| V \| $-measurable function $ h $ as in \ref{rem:integral_representation_of_first_variation}
    by $ \mcv{F}(V, \cdot) $. Moreover, we denote any $ \| \delta_F V \| $-measurable function
    $ \eta $ as in \ref{rem:integral_representation_of_first_variation} by
    $ \boldsymbol{\eta}(V, \cdot) $, we set
    \begin{displaymath}
        \wmcv{F}(V,\cdot) =  \mcv{F}(V, \cdot)\, \textint{}{} F(\cdot, T) \ud V^{(\cdot)}(T)
    \end{displaymath}
    and we refer to $ \wmcv{F}(V, \cdot) $ as \emph{mean $ F $-curvature vector of $ V $}.
\end{definition}

\renewcommand{\adim}{{n+1}}
\renewcommand{\vdim}{n}
\renewcommand{\codim}{1}

\begin{miniremark}
    \label{mr:anisotropic_mean_curvature_vector}
    Let $ \nu : \grass{\adim}{\vdim} \to \sphere{\vdim} $ be a Borel map such that
    $ \nu(S) \in S^\perp \cap \sphere{\vdim} $ for each $ S \in \grass{\adim}{\vdim} $.
    As in~\cite[3.1]{Allard1986} we~associate an integrand~$F : \grass{\adim}{\vdim} \to \R$ with
    a~norm~$\phi$ by requiring that
    \begin{displaymath}
        F(T) = \phi(\nu(T))
        \quad \text{whenever $ T \in \grass{\adim}{\vdim}$} \,.
    \end{displaymath}
    Recalling~\cite[2.16]{DRKS2020ARMA} we~get
    \begin{equation}
        \langle u,\, B_{F}(T) \rangle
        = \phi(\nu(T)) u - \bigl( \grad \phi(\nu) \bullet u \bigr)\, \nu(T)
    \end{equation}
    whenever $T \in \grass{\adim}{\vdim}$ and $ u \in \R^{\adim} $.
\end{miniremark}

\section{Controlling tilt with height}

\renewcommand{\adim}{{n+1}}
\renewcommand{\vdim}{n}
\renewcommand{\codim}{1}

In this section $F$ is the integrand associated to the uniformly convex norm $\phi$ as
in~\ref{mr:anisotropic_mean_curvature_vector}.

\begin{miniremark}
    \label{mr:basic_relation_in_codimension_one}
    Recall~\cite[8.9(3)]{Allard1972} and observe
    \begin{multline}
        \tfrac{1}{\sqrt{2}} |u - \sgn(u \bullet v) v| 
        = \bigl( 1 - |u \bullet v| \bigr)^{1/2}
        = \| S \circ T^{\perp} \|
        = \| S - T \|
        \\
        \text{whenever
          $S,T \in \pgrass{\adim}{\vdim}$,
          $u,v \in \sphere{\vdim}$,
          $u \in \ker S$,
          and $v \in \ker T$}\,.
    \end{multline}
\end{miniremark}

\begin{lemma}
    \label{lem:PQ_estimates}
    Let $S,T \in \grass{\adim}{\vdim}$. There exists $1 \le \Gamma < \infty$ depending only
    on~$\gamma(\phi)$ and $c_{2}(\phi)$ such that
    \begin{gather}
        \|Q_{F}(T)\| \le \Gamma \,,
        \quad
        \| P_{F}(S) \circ Q_{F}(T) \|
        + \| P_{F}(S) - P_{F}(T) \|
        \le \Gamma \| \project{S} - \project{T} \| \,,
        \\
        \label{eq:sac}
        \text{and} \quad
        P_{F}(S)^* \bullet Q_{F}(T) \ge \Gamma^{-1} \| \project{S} - \project{T} \|^2 \,.
    \end{gather}
\end{lemma}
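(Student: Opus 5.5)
We will derive an explicit formula for $P_F(T)$ from~\ref{mr:anisotropic_mean_curvature_vector} and reduce all three estimates to properties of $\phi$ near the sphere. Write $\nu = \nu(T)$ and $\mu = \nu(S)$. By~\ref{mr:anisotropic_mean_curvature_vector}, $\langle u, B_F(T)\rangle = \phi(\nu)u - (\grad\phi(\nu)\bullet u)\nu$. Hence
\begin{displaymath}
  B_F(T) = \phi(\nu)\id{\R^{\adim}} - \nu \otimes \grad\phi(\nu),
\end{displaymath}
that is, $B_F(T)u = \phi(\nu)u - (\grad\phi(\nu)\bullet u)\nu$. Taking the adjoint and dividing by $F(T) = \phi(\nu)$ gives
\begin{displaymath}
  P_F(T)u = u - \frac{\nu\bullet u}{\phi(\nu)}\grad\phi(\nu), \qquad Q_F(T)u = \frac{\nu \bullet u}{\phi(\nu)}\grad\phi(\nu).
\end{displaymath}
So $Q_F(T)$ is the rank-one projection onto $\lin\{\grad\phi(\nu)\}$ along $T$; it is a projection because $\grad\phi(\nu)\bullet\nu = \phi(\nu)$ by Euler's identity. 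The formula is invariant under $\nu \mapsto -\nu$, since $\phi$ is even and $\grad\phi$ is odd. We may therefore choose the sign of $\mu$ freely and assume $\mu\bullet\nu \ge 0$. Then \ref{mr:basic_relation_in_codimension_one} gives $|\mu - \nu| \le \sqrt2\,\|\project{S}-\project{T}\| \le 2|\mu-\nu|$.

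\textbf{First bound.} We have $\|Q_F(T)\| = |\grad\phi(\nu)|/\phi(\nu) \le c_1(\phi)/\min_{\sphere{\vdim}}\phi$. The minimum of $\phi$ on the sphere is controlled by $c_0(\phi)$ and $\gamma(\phi)$. Alternatively, note that $\phi(\nu) = \grad\phi(\nu)\bullet\nu$, and uniform convexity bounds $\phi$ below on the sphere in terms of $\gamma(\phi)$, because $\phi$ restricted to any great circle through $\nu$ has second derivative at least $\gamma$ transversally.

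\textbf{Middle bound.} Since $P_F(S)Q_F(T)u = \frac{\nu\bullet u}{\phi(\nu)}P_F(S)\grad\phi(\nu)$ and $P_F(S)\grad\phi(\mu) = 0$, we get
\begin{displaymath}
  \|P_F(S)Q_F(T)\| \le \frac{\|P_F(S)\|}{\phi(\nu)}\,|\grad\phi(\nu)-\grad\phi(\mu)| \le C\,c_2(\phi)\,|\nu-\mu|.
\end{displaymath}
Similarly, $P_F(S)-P_F(T) = Q_F(T)-Q_F(S)$ is a difference of the smooth maps $\nu \mapsto \nu\otimes\grad\phi(\nu)/\phi(\nu)$, whose Lipschitz constant on the sphere is bounded by $c_2(\phi)$ and the lower bound for $\phi$.

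\textbf{Coercivity \eqref{eq:sac}, the main step.} Compute $P_F(S)^*\bullet Q_F(T) = \trace(P_F(S)Q_F(T))$. Since $Q_F(T)$ has rank one, with image spanned by $\grad\phi(\nu)$ and kernel $T$, this trace equals
\begin{displaymath}
  \frac{\nu\bullet P_F(S)\grad\phi(\nu)}{\phi(\nu)} = \frac{1}{\phi(\nu)}\Bigl(\phi(\nu) - \frac{(\mu\bullet\grad\phi(\nu))\,(\nu\bullet\grad\phi(\mu))}{\phi(\mu)}\Bigr).
\end{displaymath}
Using Euler's identity and the convexity inequality $\grad\phi(\mu)\bullet\nu \le \phi(\nu)$ (equivalently $\phi(\nu) - \grad\phi(\mu)\bullet\nu \ge 0$), we can write
\begin{displaymath}
  \phi(\nu)\phi(\mu) - (\mu\bullet\grad\phi(\nu))(\nu\bullet\grad\phi(\mu))
\end{displaymath}
as a positive combination of the two Bregman-type quantities $\phi(\nu) - \grad\phi(\mu)\bullet\nu$ and $\phi(\mu) - \grad\phi(\nu)\bullet\mu$. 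Concretely, setting $a = \grad\phi(\mu)\bullet\nu \le \phi(\nu)$ and $b = \grad\phi(\nu)\bullet\mu \le \phi(\mu)$:
\begin{itemize}
  \item if $a, b \ge 0$, then $\phi(\nu)\phi(\mu) - ab \ge \phi(\mu)(\phi(\nu)-a)$;
  \item if $a$ and $b$ have opposite signs, the difference is at least $\phi(\nu)\phi(\mu)$, which is bounded below, while $|\mu-\nu|$ is bounded above, so the bound is trivial.
\end{itemize}
The case $a,b<0$ needs no separate treatment: it cannot occur when $\mu\bullet\nu \ge 0$ and $|\mu-\nu|$ is small, and for large $|\mu-\nu|$ we again use compactness.

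It therefore remains to show $\phi(\nu) - \grad\phi(\mu)\bullet\nu \ge c\,|\nu-\mu|^2$ with $c$ depending on $\gamma(\phi)$ and $c_2(\phi)$. By $1$-homogeneity, $\phi(\nu) - \phi(\mu) - \grad\phi(\mu)\bullet(\nu-\mu)$ equals the left-hand side. Taylor's formula along the segment from $\mu$ to $\nu$ and uniform convexity in directions transverse to the radial one give the bound. The segment stays away from the origin since $\mu\bullet\nu \ge 0$, and $\Der^2\phi$ at a point $w$ is $|w|^{-1}$ times its value at $w/|w|$. The transverse component of $\nu-\mu$ relative to points on the segment has size comparable to $|\nu-\mu|$.

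I expect the main obstacle to be this last quantitative convexity estimate with constants depending only on $\gamma$ and $c_2$. Here one must handle the radial degeneracy of $\Der^2\phi$ carefully, and also manage the case distinctions on the signs of $a$ and $b$. If those become awkward, I would instead prove the estimate for $\|\project{S}-\project{T}\|$ small by Taylor expansion, and for large distances by compactness together with positivity of the trace when $S\ne T$. Positivity holds because $Q_F(T)\grad\phi(\nu)=\grad\phi(\nu)\notin T$ and $P_F(S)\grad\phi(\nu)\ne0$ when $S\ne T$. That compactness argument, however, yields a constant that is uniform only over the relevant class of norms, which needs a separate check.
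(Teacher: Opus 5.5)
Your explicit formulas $P_F(T)u=u-\phi(\nu)^{-1}(\nu\bullet u)\grad\phi(\nu)$ and $Q_F(T)u=\phi(\nu)^{-1}(\nu\bullet u)\grad\phi(\nu)$ are correct. Your middle estimate via $P_F(S)\grad\phi(\mu)=0$ is a valid and more direct alternative to the paper's Lipschitz-along-a-geodesic argument.

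For the coercivity estimate (SAC) the paper makes no case distinction. It imports from \cite[3.2(6)]{Allard1986} the inequality $\phi(\nu)\phi(\mu)-(\nu\bullet\grad\phi(\mu))(\mu\bullet\grad\phi(\nu))\ge\gamma(1-|\mu\bullet\nu|)$, which is exactly what you are trying to reprove. Your reproof has a genuine gap in the case $a,b<0$.
\begin{itemize}
\item This case is not vacuous for strongly anisotropic norms. It only forces $|\mu-\nu|$ to be bounded below, since $a\ge\phi(\mu)-c_1(\phi)|\nu-\mu|$.
\item The compactness argument you invoke gives a constant depending on $\phi$ itself, not only on $\gamma(\phi)$ and $c_2(\phi)$ as the lemma asserts. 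You note this but leave it open.
\item Your fallback does not close it either. $P_F(S)\grad\phi(\nu)\neq0$ does not imply $\nu\bullet P_F(S)\grad\phi(\nu)>0$, because a nonzero vector of $S$ can be orthogonal to $\nu$.
\end{itemize}
The justification of the lower bound for $\phi$ in your first estimate is also imprecise. Along a unit-speed great circle $w(\theta)$ the correct identity is $(\phi\circ w)''+\phi\circ w=\Der^2\phi(w)(w',w')\ge\gamma$, and this does not bound $(\phi\circ w)''$ from below.

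The missing ingredient is a global Bregman estimate on the sphere, which removes all sign distinctions:
\[
\phi(w)-\grad\phi(\mu)\bullet w\ \ge\ \tfrac{\gamma}{2}\,|w-\mu|^2\qquad\text{for }\mu,w\in\sphere{n}.
\]
To prove it, restrict $f=\phi-\grad\phi(\mu)\bullet(\cdot)$ to a unit-speed great circle from $\mu$ through $w$. Linear functions $\ell$ satisfy $\ell''+\ell=0$ there, so $f''+f\ge\gamma$. Also $f(0)=0$ by Euler's identity and $f'(0)=0$ since $\grad f(\mu)=0$. Hence $y=f-\gamma(1-\cos\theta)$ satisfies $y(\theta)=\int_0^\theta\sin(\theta-s)(y''+y)(s)\,ds\ge0$ for $0\le\theta\le\pi$. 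That is, $f\ge\gamma(1-\cos\theta)=\frac{\gamma}{2}|w-\mu|^2$.

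Taking $w=-\mu$ gives $\phi\ge\gamma$ on $\sphere{n}$, which settles your first bound.

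For (SAC), convexity at $\pm\mu$ gives $|b|\le\phi(\mu)$. So $ab\le|a|\,\phi(\mu)$ whatever the signs, and $\phi(\nu)\phi(\mu)-ab\ge\phi(\mu)\bigl(\phi(\nu)-|a|\bigr)$. Since $\mu\bullet\nu\ge0$,
\[
\phi(\nu)-|a|=\min\bigl\{\phi(\nu)-\grad\phi(\mu)\bullet\nu,\ \phi(-\nu)-\grad\phi(\mu)\bullet(-\nu)\bigr\}\ge\tfrac{\gamma}{2}\min\bigl\{|\nu-\mu|^2,\,|\nu+\mu|^2\bigr\}=\tfrac{\gamma}{2}|\nu-\mu|^2 .
\]
With $\phi(\mu)\ge\gamma$ this gives $\phi(\nu)\phi(\mu)-ab\ge\frac{\gamma^2}{2}|\nu-\mu|^2$ in all cases.

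Your segment-Taylor argument only handles the first term in the minimum. The second needs the great-circle version, because the segment from $\mu$ to $-\nu$ passes through $(\mu-\nu)/2$, which may be arbitrarily close to the origin. With this lemma your route becomes a complete, self-contained replacement for the citation of Allard, with constants depending only on $\gamma(\phi)$ and $c_2(\phi)$.
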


\begin{proof}
    Let $\nu,\eta \in \sphere{\vdim}$ be such that $\nu \bullet \eta \ge 0$, $\nu \in T^{\perp}$,
    and $\eta \in S^{\perp}$. Note that the first estimate is trivially true because $Q_{F}(T)$ is
    defined using $\phi(\nu)$ and $\uD \phi(\nu)$, the sphere $\sphere{\vdim}$ is compact, and
    $\phi$ is of class~$\cnt{2}$ away from the origin. Let $\varphi : \R \to \sphere{\vdim}$ be a
    geodesic such that $\varphi(0) = \nu$ and $\varphi(l) = \eta$, where
    $l = \arccos(\eta \bullet \nu)$. Recall~\ref{mr:basic_relation_in_codimension_one} and note that
    since $\eta \bullet \nu \ge 0$ there holds
    \begin{displaymath}
        \sqrt{2} \| \project{S} - \project{T} \|
        = |\eta - \nu|
        \le l
        \le \tfrac{\pi \sqrt{2}}{4} |\eta - \nu|
        = \tfrac{\pi}{2} \| \project{S} - \project{T} \| \,.
    \end{displaymath}
    Recall the definition of $\beta$ from~\ref{mr:basic_notation} and define
    $f,g : \R \to \End{\R^{\adim}}$ by
    \begin{gather}
        f(t) = P_{F}( \ker \beta(\varphi(t)) ) \circ Q_{F}(T)
        \quad \text{and} \quad
        g(t) = P_{F}(\ker \beta(\varphi(t))) - P_{F}(T)
        \quad \text{for $t \in \R$} \,.
    \end{gather}
    Clearly $f(0) = 0 = g(0)$ so
    \begin{gather}
        \| P_{F}(S) \circ Q_{F}(T) \|
        = \| f(l) - f(0) \|
        \le \tfrac{\pi \sqrt{2}}{4} \Lip f\, |\eta - \nu|
        \le \tfrac{\pi}{2} \Lip f\, \| \project{S} - \project{T} \|
        \\
        \text{and similarly} \quad
        \| P_{F}(S) - P_{F}(T) \|
        \le \tfrac{\pi}{2} \Lip g\, \| \project{S} - \project{T} \| \,.
    \end{gather}
    Since $\Lip f$ and $\Lip g$ are both controlled by $c_2(\phi)$ we obtain the second estimate.
    To~get the third estimate use~\cite[3.2(6)]{Allard1986} together
    with~\ref{mr:basic_relation_in_codimension_one} and write
    \begin{multline}
        \phi(\nu) \phi(\eta) P_{F}(S)^* \bullet Q_{F}(T)
        = \phi(\nu) \phi(\eta) P_{F}^*(S) \nu \bullet Q_{F}(T) \nu
        \\
        = \phi(\nu) \phi(\eta) - \bigl( \nu \bullet \grad \phi(\eta) \bigr) \bigl( \eta \bullet \grad \phi(\nu) \bigr)
        \ge \gamma ( 1 - |\eta \bullet \nu| ) = \gamma \| \project{S} - \project{T} \|^2 \,.
    \end{multline}
    This gives a lower bound with constant depending on $c_0(\phi)$ and $\gamma(\phi)$.
\end{proof}

\begin{remark}
    Lemma~\ref{lem:PQ_estimates} combines~\cite[3.2]{Allard1986}
    and~\cite[Theorem~1.3]{Philippis2018}. Observe that
    condition~\ref{lem:PQ_estimates}\eqref{eq:sac} says exactly that $F$ satisfies the \emph{scalar
      atomic condition} (SAC) introduced by De~Rosa and Tione;
    see~\cite[Definition~3.3]{DeRosaTione2022}.
\end{remark}

\begin{lemma}[\protect{Caccioppoli-type inequality}]
    \label{lem:tilt-height}
    Assume
    \begin{gather}
        \Omega \subseteq \R^{\adim} \quad \text{is open} \,,
        \quad
        c \in \Omega \,,
        \quad
        T \in \grass{\adim}{\vdim} \,,
        \quad
        V \in \Var{\vdim}(\Omega) \,,
        \\
        \text{$F$ is associated to the norm~$\phi$ as in~\ref{mr:anisotropic_mean_curvature_vector}} \,,
        \quad
        \|\delta_F V\| \quad \text{is Radon} \,,
        \\
        \zeta : \Omega \to \R \cap \{ t : 0 \le t \le 1 \} \quad \text{is~smooth} \,,
        \quad
        \spt \zeta \quad \text{is compact} \,.
    \end{gather}
    Set $\kappa = \sup\{ | \grad \zeta(x) | : x \in \Omega \}$. There exists $0
    < \Gamma < \infty$ depending only on $\gamma(\phi)$ and~$c(\phi)$ such that
    \begin{multline}
        \textint{}{}
        \zeta^2(z)
        \| \project{S} - \project{T} \|^2 \ud V_F(z,S)
        \le \Gamma \Bigl(
        \kappa^2 \textint{\spt \zeta}{}
        |\perpproject{T}(z-c)|^2 \ud \|V_F\|(z)
        \\
        + \textint{}{} \zeta^2(z)
        |\perpproject{T}(z-c)| \ud \|\delta_F V\|(z)
        \Bigr) \,.
    \end{multline}
\end{lemma}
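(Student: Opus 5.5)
We test the first variation with the vector field
\[
g(z) = \zeta^2(z)\, Q_F(T)\,(z-c), \qquad z \in \Omega ,
\]
which is smooth with compact support. In the Euclidean proof one would use $\zeta^2 \perpproject{T}(z-c)$. Here the oblique projection $Q_F(T)$ is the natural replacement, because it makes the zeroth-order term reproduce the quantity controlled by the scalar atomic condition \ref{lem:PQ_estimates}\eqref{eq:sac}. Since $F$ is autonomous, the term involving $\uD F \circ \transpose{\pi}$ vanishes, and \ref{rem:BF_as_derivative} gives
\[
\delta_F V(g) = \textint{}{} B_F(S) \bullet \uD g(z) \ud V(z,S) .
\]
We have $\uD g(z) = \zeta^2(z) Q_F(T) + 2\zeta(z)\, Q_F(T)(z-c) \otimes \grad\zeta(z)$. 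Since $B_F(S) = F(S) P_F(S)^*$, this splits into two pieces:
\[
\textint{}{} \zeta^2(z)\, P_F(S)^* \bullet Q_F(T) \ud V_F(z,S)
\quad\text{and}\quad
2\textint{}{} \zeta(z)\, \bigl\langle \grad\zeta(z),\, P_F(S) Q_F(T)(z-c) \bigr\rangle \ud V_F(z,S) .
\]
We should double-check the transpose convention in the cross term; only its norm matters below. By \eqref{eq:sac}, the first piece is at least $\Gamma^{-1}$ times the left-hand side of the lemma.

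To estimate the cross term, write $Q_F(T)(z-c) = Q_F(T)\perpproject{T}(z-c)$. This holds because $\ker Q_F(T) = \im P_F(T) = T$, as $P_F(T)$ is a projection onto $T$ by \ref{rem:P_projection}. Lemma~\ref{lem:PQ_estimates} gives $\|P_F(S)\circ Q_F(T)\| \le \Gamma \|\project{S}-\project{T}\|$. Hence the cross term is bounded by
\[
2\Gamma\,\kappa \textint{}{} \zeta(z)\, \|\project{S}-\project{T}\|\, |\perpproject{T}(z-c)| \ud V_F(z,S) .
\]
By Young's inequality this is at most $(2\Gamma)^{-1}\textint{}{}\zeta^2\|\project{S}-\project{T}\|^2 \ud V_F$ plus $C\kappa^2 \textint{\spt\zeta}{} |\perpproject{T}(z-c)|^2 \ud\|V_F\|$. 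The first of these is absorbed into the left-hand side.

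For the first-variation side, use $|\delta_F V(g)| \le \textint{}{} |g| \ud\|\delta_F V\|$. Also $|g(z)| \le \zeta^2(z)\, \|Q_F(T)\|\, |\perpproject{T}(z-c)| \le \Gamma \zeta^2(z)\, |\perpproject{T}(z-c)|$, again using $Q_F(T) = Q_F(T)\perpproject{T}$ and $\|Q_F(T)\| \le \Gamma$. Rearranging gives the stated inequality, with a constant depending only on $\gamma(\phi)$ and $c_2(\phi)$ (through $c_0(\phi)$).

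The main obstacle is the algebra of the oblique projections. We must confirm that $Q_F(T)$ annihilates $T$, so that only the height $\perpproject{T}(z-c)$ appears and never the full displacement $z-c$. We must also verify that the pairing $B_F(S)\bullet Q_F(T)$ equals exactly $F(S)\,P_F(S)^*\bullet Q_F(T)$, so that \eqref{eq:sac} applies and yields a lower bound with no error term. Both facts follow from \ref{rem:P_projection} and the definition $P_F = B_F^*/F$, but the transposes have to be tracked carefully.
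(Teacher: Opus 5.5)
Your proposal is correct and is essentially the paper's proof: the paper uses the same test field $g(z)=\zeta(z)^2Q_F(T)(z-c)$. It makes the same reduction $Q_F(T)=Q_F(T)\circ\perpproject{T}$ and bounds the zeroth-order and cross terms with the same two estimates of Lemma~\ref{lem:PQ_estimates}. The only cosmetic difference is that you absorb the cross term via Young's inequality, whereas the paper applies H\"older's inequality and then splits into two cases, depending on which term of the right-hand side dominates.
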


\begin{proof}
    Let
    $\Delta = \Gamma_{\text{\ref{lem:PQ_estimates}}}(\gamma(\phi), c_{2}(\phi))$ be given
    by~\ref{lem:PQ_estimates}. Define $g \in \VF(U)$ by~setting
    \begin{displaymath}
        g(z) = \zeta(z)^2 Q_{F}(T) (z-c)
        \quad \text{for $x \in U$} \,.
    \end{displaymath}
    Note that, since $Q_{F}(T) = Q_{F}(T) \circ \perpproject{T}$ (by~\ref{rem:P_projection}), we~get
    for $x \in U$ and $S \in \grass{\adim}{\vdim}$
    \begin{multline}
        \bigl| P_{F}(S)^* \grad \zeta(x) \bullet Q_{F}(T)(z-c) \bigr|
        \\
        = \bigl| \grad \zeta(x) \bullet P_{F}(S) \circ Q_{F}(T) \circ \perpproject{T} (z-c) \bigr|
        \\
        \le |\grad \zeta(x)| \cdot \| P_{F}(S) \circ Q_{F}(T) \| \cdot |\perpproject{T} (z-c)|
        \,;
    \end{multline}
    hence, employing H{\"o}lder's inequality and recalling~\ref{lem:PQ_estimates} we get
    \begin{multline}
        \left| \textint{}{} \zeta(x)
            P_{F}(S)^* \grad \zeta(x) \bullet Q_{F}(T)(z-c)
            \ud V_{F}(x,S)
        \right|
        \\
        \le \kappa \Delta \left( \textint{\spt \zeta}{} |\perpproject{T}(z-c)|^2 \ud \| V_{F} \| (x) \right)^{1/2}
        \left( \textint{}{} \zeta(x)^2 \| \project{S} -\project{T} \|^2 \ud V_F(x,S) \right)^{1/2} 
        \,.
    \end{multline}
    We have
    \begin{multline}
        \textint{}{} \zeta(x)^2 \boldsymbol{\eta}_{F}(V;x) \bullet Q_{F}(T) \circ \perpproject{T}(z-c) \ud \|\delta_F V\|(x)
        = \delta_FV(g)
        \\
        = 2 \textint{}{} \zeta(x) P_{F}(S)^* \grad \zeta(x) \bullet Q_{F}(T)(z-c) \ud V_{F}(x,S)
        \\
        + \textint{}{} \zeta(x)^2 P_{F}(S)^* \bullet Q_{F}(T) \ud V_{F}(x,S) \,;
    \end{multline}
    thus,
    \begin{multline}
        \label{eq:th:after-holder}
        \Delta^{-1} \textint{}{} \zeta(x)^2 \| \project{S} - \project{T} \|^2 \ud V_{F}(x,S)
        \le \textint{}{} \zeta(x)^2 P_{F}(S)^* \bullet Q_{F}(T) \ud V_{F}(x,S)
        \\
        \le 2 \kappa \Delta \left( \textint{\spt \zeta}{} |\perpproject{T}(z-c)|^2 \ud \| V_F \| (x) \right)^{1/2}
        \left( \textint{}{}
            \zeta(x)^2 \| \project{S} - \project{T} \|^2 \ud V_{F}(x,S)
        \right)^{1/2}
        \\
        + \Delta \textint{}{} \zeta(x)^2 |\perpproject{T}(z-c)| \ud \| \delta_F V \| (x) 
        \,.
    \end{multline}
    Now, there are two possibilities. Either
    \begin{displaymath}
        \Delta \textint{}{} \zeta(x)^2 |\perpproject{T}(z-c)| \ud \| \delta_F V \| (x)
        \le \tfrac 12 \Delta^{-1}\textint{}{} \zeta(x)^2 \| \project{S} - \project{T} \|^2 \ud V_{F}(x,S) \,,
    \end{displaymath}
    which yields
    \begin{multline}
        \textint{}{} \zeta(x)^2 \| \project{S} - \project{T} \|^2 \ud V_{F}(x,S)
        \\
        \le 4 \kappa \Delta^2
        \left(
            \textint{\spt \zeta}{} |\perpproject{T}(z-c)|^2 \ud \| V_F \| (x)
        \right)^{1/2}
        \left( 
            \textint{}{} \zeta(x)^2  \| \project{S} - \project{T} \|^2 \ud V_{F}(x,S)
        \right)^{1/2}
    \end{multline}
    and, consequently,
    \begin{displaymath}
        \textint{}{} \zeta(x)^2 \| \project{S} - \project{T} \|^2 \ud V_{F}(x,S)
        \le 16 \kappa^2 \Delta^4
        \textint{\spt \zeta}{} |\perpproject{T}(z-c)|^2 \ud \| V_F \| (x) \,;
    \end{displaymath}
    or
    \begin{displaymath}
        \Delta \textint{}{} \zeta(x)^2 |\perpproject{T}(z-c)| \ud \| \delta_F V \| (x)
        \ge \tfrac 12 \Delta^{-1}\textint{}{} \zeta(x)^2 \| \project{S} - \project{T} \|^2 \ud V_{F}(x,S) \,.
        \qedhere
    \end{displaymath}
\end{proof}

\begin{remark}
    The proof of an analogous fact to~\ref{lem:tilt-height} in~\cite[3.3]{Allard1986} deals with
    infinite cylinders. Above we proved the statement more in the spirit of~\cite[8.13]{Allard1972}.
    Compare also~\cite[Proposition~4.3]{DeRosaTione2022} for the anisotropic case in arbitrary
    codimension.
\end{remark}

\begin{corollary}
    \label{cor:tilt-decay}
    Suppose
    \begin{gather}
        \Omega \subseteq \R^{\adim} \text{ is open} \,,
        \quad
        c \in \Omega \,,
        \quad
        T \in \grass{\adim}{\vdim} \,,
        \quad
        V \in \Var{\vdim}(\Omega) \,,
        \\
        \text{$F$ is associated to the norm~$\phi$ as in~\ref{mr:anisotropic_mean_curvature_vector}} \,,
        \\
        \label{eq:ccpp:singular_part}
        \density^{\vdim}(\|V\|, c) <  \infty \,,
        \quad
        \|\delta_F V\|_{\mathrm{sing}} = 0 \,,
        \quad
        \mcv{F}(V,\cdot) \in \Lploc{2}(\|V\|,\R^{\adim}) \,,
        \\
        \label{eq:ccpp:mean_curvature}
        \limsup_{r \to 0^{+}} r^{-\vdim}
        \textint{\cball cr}{}
        |\mcv{F}(V,z)|^2 \ud \|V_F\|(z) < \infty \,,
        \\
        \label{eq:ccpp:height_decay}
        \limsup_{r \to 0^{+}} r^{-\vdim-4}
        \textint{\cball{c}{r}}{} |\perpproject{T}(x-c)|^2 \ud \|V\|(x) < \infty
        \,.
    \end{gather}
    Then
    \begin{displaymath}
        \limsup_{r \to 0^{+}} r^{-\vdim-2} \textint{\cball{c}{r} \times \pgrass{\adim}{\vdim}}{}
        \| \project{S} - \project{T} \|^2 \ud V(z,S) < \infty \,.
    \end{displaymath}
\end{corollary}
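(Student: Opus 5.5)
The plan is to apply the Caccioppoli-type inequality of Lemma~\ref{lem:tilt-height} with a standard cutoff adapted to the ball $\cball{c}{2r}$, and then bound each term on the right using the hypotheses. Fix $r>0$ small so that $\cball{c}{2r} \subseteq \Omega$. Choose a smooth $\zeta : \Omega \to [0,1]$ with $\zeta = 1$ on $\cball{c}{r}$, $\spt \zeta \subseteq \cball{c}{2r}$, and $\kappa \le 2/r$.

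First, we pass from $V$ to $V_F$ on the left-hand side. Since $F$ is bounded below by $\inf_{\sphere{\vdim}} \phi > 0$ and above by $c_0(\phi)$, we have $V \le C\,V_F$ and $\|V_F\| \le C\|V\|$, with $C$ depending only on $\phi$. Hence
\begin{displaymath}
  \textint{\cball{c}{r} \times \pgrass{\adim}{\vdim}}{} \| \project{S} - \project{T} \|^2 \ud V(z,S)
  \le C \textint{}{} \zeta^2(z) \| \project{S} - \project{T} \|^2 \ud V_F(z,S) \,.
\end{displaymath}

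Second, we estimate the right-hand side of Lemma~\ref{lem:tilt-height}.
\begin{itemize}
  \item The first term is at most $C r^{-2} \textint{\cball{c}{2r}}{} |\perpproject{T}(z-c)|^2 \ud \|V\|(z)$. By \eqref{eq:ccpp:height_decay}, this is $\le C r^{-2} (2r)^{\vdim+4} = O(r^{\vdim+2})$.
  \item For the second term, we use $\|\delta_F V\|_{\mathrm{sing}} = 0$, so that $\|\delta_F V\| = |\mcv{F}(V,\cdot)|\,\|V_F\|$ by the representation in \ref{rem:integral_representation_of_first_variation}. Cauchy--Schwarz then gives
  \begin{displaymath}
    \textint{}{} \zeta^2 |\perpproject{T}(z-c)| \ud \|\delta_F V\|
    \le \Bigl( \textint{\cball{c}{2r}}{} |\mcv{F}(V,z)|^2 \ud \|V_F\| \Bigr)^{1/2}
    \Bigl( \textint{\cball{c}{2r}}{} |\perpproject{T}(z-c)|^2 \ud \|V_F\| \Bigr)^{1/2} \,.
  \end{displaymath}
  By \eqref{eq:ccpp:mean_curvature} and \eqref{eq:ccpp:height_decay}, the product is $O(r^{\vdim/2}) \cdot O(r^{(\vdim+4)/2}) = O(r^{\vdim+2})$.
\end{itemize}
Dividing by $r^{\vdim+2}$ and taking $\limsup$ gives the conclusion.

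The main care point is bookkeeping rather than any deep step. We must make sure the $\limsup$ hypotheses translate into uniform bounds for all sufficiently small $r$; this holds since a finite $\limsup$ yields a bound for $r<r_0$. We must also check that the measures $\|V\|$ and $\|V_F\|$ are interchangeable up to constants. The density hypothesis $\density^{\vdim}(\|V\|,c)<\infty$ is not even needed beyond guaranteeing local finiteness near $c$; if needed, it would bound $\|V\|(\cball{c}{2r}) \le C r^{\vdim}$. One must also confirm the normalization of $\mcv{F}$ relative to $\|V_F\|$ versus $\|V\|$, which again only affects constants.
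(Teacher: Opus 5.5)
Your proposal is correct and follows the paper's proof. Both apply Lemma~\ref{lem:tilt-height} with a cutoff equal to $1$ on a ball and gradient $O(1/r)$, bound the first term by the height decay, and bound the second term using $\|\delta_F V\| = |\mcv{F}(V,\cdot)|\,\|V_F\|$ and Cauchy--Schwarz against the mean curvature and height hypotheses; the paper's choice $\cball{c}{r/2}$, $\oball{c}{r}$ instead of your $\cball{c}{r}$, $\cball{c}{2r}$ is immaterial, and your explicit comparison of $V$ with $V_F$ via $0<\inf F\le F\le c_0(\phi)$ makes precise a step the paper leaves implicit.
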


\begin{proof}
    For $0 < r < \dist(c,\R^{\adim} \without \Omega)$ we
    apply~\ref{lem:tilt-height} with $\zeta$ satisfying
    \begin{gather}
        \spt \zeta \subseteq \oball cr \,,
        \quad
        \zeta(z) = 1 \quad \text{for $z \in \cball{c}{r/2}$} \,,
        \\
        \text{and} \quad
        \|\uD \zeta(z)\| \le 3/r \quad \text{for $z \in \Omega$} \,;
    \end{gather}
    this yields the estimate
    \begin{multline}
        r^{-\vdim-2} \textint{\cball{c}{r/2} \times \grass{\adim}{\vdim}}{}
        \| \project{S} - \project{T} \|^2 \ud V(z,S)
        \\
        \le \Gamma_{\text{\ref{lem:tilt-height}}}(\phi) \Bigl(
        9 r^{-\vdim-4} \textint{\cball cr}{}
        |\perpproject{T}(z-c)|^2 \ud \|V\|(z)
        \\
        + r^{-\vdim-2} \textint{\cball cr}{}
        |\perpproject{T}(z-c)| \ud \|\delta_{F}V\|(z)
        \Bigr)
        \,.
    \end{multline}
    The first term on the right-hand side stays bounded as~$r \to 0^{+}$
    by~\eqref{eq:ccpp:height_decay}. We~shall deal with the second one.
    Recalling~\eqref{eq:ccpp:singular_part} we get
    \begin{displaymath}
        \textint{\cball cr}{}
        |\perpproject{T}(z-c)| \ud \|\delta_{F}V\|(z)
        = \textint{\cball cr}{}
        |\perpproject{T}(z-c)| \cdot | \mcv{F}(V,z)| \ud \|V_F\|(z) \,.
    \end{displaymath}
    Applying H{\"o}lder's inequality
    \begin{multline}
        r^{-\vdim-2} \textint{\cball cr}{}
        |\perpproject{T}(z-c)| \cdot | \mcv{F}(V,z)| \ud \|V_F\|(z)
        \\
        \le \bigl( r^{-\vdim-4} \textint{\cball cr}{}
        |\perpproject{T}(z-c)|^2 \ud \|V_F\|(z) \bigr)^{1/2}
        \bigl( r^{-\vdim} \textint{\cball cr}{}
        | \mcv{F}(V,z)|^2 \ud \|V_F\|(z) \bigr)^{1/2} \,.
    \end{multline}
    By~\eqref{eq:ccpp:mean_curvature} and~\eqref{eq:ccpp:height_decay} this also stays bounded
    as~$r \to 0^{+}$.
\end{proof}

\section{First variation of a push-forward of a varifold}

\renewcommand{\adim}{m}
\renewcommand{\vdim}{k}
\renewcommand{\codim}{m-k}

In this section we prove some basic facts on the first variation of the push-forward of a~varifold.

\begin{miniremark}\label{push-forward basic}
    We assume 
    \begin{enumerate}
    \item $ U, \Omega \subseteq \R^m $ open subsets, 
    \item $V \in \RVar{\vdim}(U)$ with $ \density^{\vdim}(\| V \|,y) \geq 1 $ for $ \| V \| $ a.e.\ $ y \in U $, 
    \item $ \psi : U \rightarrow \Omega $ is a smooth diffeomorphism, $ \varphi = \psi^{-1} $ and $ W = \psi_\# V $,
    \item $F$ is a~$\vdim$-dimensional $ \cnt{1} $ integrand in~$ U $ and $ G = \varphi^\# F $, 
    \item $ \Sigma = U \cap \bigl\{ y : 0< \density^{\vdim}(\| V \|,y) < \infty \bigr\} $.
    \end{enumerate}

    Firstly, we observe that $ \Sigma $ is a Borel countably $(\HM^{\vdim}, {\vdim}) $-rectifiable set that
    meets each compact subset of $ U $ on a set of finite $ \HM^{\vdim} $ measure by \cite[2.8(5),
    3.5(1)]{Allard1972}. We can apply the area formula \cite[3.2.20]{Federer1969} to see that
    \begin{multline}
        \| W \|(S) = \textint{\psi^{-1} \lIm S \rIm}{}
        |\tbwedge_{\vdim} \uD \psi(y) \circ \Tan^{\vdim}(\| V \|,y)|
        \ud \| V \|(y)
        \\
        = \textint{\psi^{-1} \lIm S \rIm \cap \Sigma}{}
        |\tbwedge_{\vdim} \uD \psi(y) \circ \Tan^{\vdim}(\| V \|,y)| \, \density^{\vdim}(\| V \|,y)
        \ud \HM^{\vdim}(y)
        \\
        = \textint{S \cap \psi \lIm \Sigma \rIm}{}
        \density^{\vdim}(\| V \|,\psi^{-1}(x))
        \ud \HM^{\vdim}(x)
        \quad \text{whenever $ S \subseteq \Omega $ is a Borel set} \,.
    \end{multline}
    In particular,  
    \begin{gather}
        \| W \| = \bigl(\density^{\vdim}(\| V \|, \cdot) \circ \psi^{-1}\bigr)\, \HM^{\vdim} \restrict \psi \lIm \Sigma \rIm
        \\
        \text{and} \quad
        \density^{\vdim}(\| V \|, \psi^{-1}(x))
        = \density^{\vdim}(\| W \|, x)
        \quad \textrm{for  $ \HM^{\vdim} $ a.e.\ $ x \in \psi \lIm \Sigma \rIm $} \,.
    \end{gather}
    Moreover, recalling~\cite[Lemma B.2]{Santilli20a} we infer that
    \begin{equation}
        \label{push-forward basic eq}
        \Der \psi(b)\bigl\lIm \Tan^{\vdim}(\| V \|,b) \bigr\rIm
        =  \Tan^{\vdim}(\| W \|, \psi(b))
        \quad \textrm{for every $ b \in U $} \,.
    \end{equation}

    Finally, if $\alpha \in \ccspace{\grass{m}{\vdim}} $ then, employing again the area formula, we
    obtain
    \begin{multline}
        \textint{\cball{a}{r} \times \grass{m}{\vdim}}{}\alpha(S)  \ud W(x, S)
        \\
        = \textint{\psi^{-1} \lIm \cball{a}{r} \rIm}{}
        \alpha\bigl(\uD \psi(y) \lIm \Tan^{\vdim}(\|V \|, y) \rIm \bigr) \,
        \bigl| \tbwedge_{\vdim} \uD \psi(y) \circ \project{\Tan^{\vdim}(\| V \|, y)} \bigr|
        \ud \| V \|(y)
        \\
        = \textint{\cball{a}{r}}{}
        \alpha\bigl(\uD \psi(\psi^{-1}(x)) \lIm \Tan^{\vdim}(\|V \|, \psi^{-1}(x)) \rIm \bigr)\,
        \density^{\vdim}(\| V \|, \psi^{-1}(x))
        \ud \HM^{\vdim}(x)
        \\
        = \textint{\cball{a}{r}}{}
        \alpha\bigl( \Tan^{\vdim}(\|W\|, x) \bigr)
        \ud \| W \|(x)
    \end{multline}
    and it follows from \cite[2.9.8]{Federer1969} that
    \begin{displaymath}
        W^{(a)}(\alpha) = \lim_{r \to 0^{+}}
        \frac{\textint{\cball{a}{r} \times \grass{m}{\vdim}}{}\alpha(S) \ud W(x, S)}
        {\measureball{\| W \|}{\cball{a}{r}}}
        = \alpha(\Tan^{\vdim}(\|W\|, a))
    \end{displaymath}
    for $ \HM^{\vdim} $ a.e.\ $ a \in \psi \lIm \Sigma\rIm $.  
    
    By \cite[3.5(1)]{Allard1972} we infer that $ \psi_\# V \in \RVar{\vdim}(\Omega)$; if
    $ V \in \IVar{\vdim}(U) $, then $ \psi_\# V \in \IVar{\vdim}(\Omega) $.
\end{miniremark}

For the next  lemma recall Definition \ref{def:wmcv}.

\begin{lemma}
    \label{lem:image_mcv}
    Suppose \ref{push-forward basic}(1)-(5) and $ \| \delta_F V \| $ is a Radon measure.

    Then $\| \delta_{G} W \|$ is Radon and for each open set $ A \subseteq \R^m $ whose closure is
    a~compact subset of $ \Omega $ there holds
    \begin{equation}
        \label{lem:image_mcv eq1}
        M(A)^{-1} \psi_{\#} \| \delta_{F}V \|( A )
        \le \| \delta_{G} W \|(A)
        \le M(A) \psi_{\#} \| \delta_{F}V \|( A ) \,,
    \end{equation}
    where $M(A) = \sup \{ \| \uD \varphi(x) \| + \| (\uD \varphi(x))^{-1} \| : x \in A \} <
    \infty$.  Moreover, 
    \begin{equation}
        \label{lem:image_mcv eq2}
        \wmcv{G}(W,x) = \bigl| \tbwedge_{\vdim} \uD \varphi(x) \circ \project{\Tan^{\vdim}(\|W\|,x)} \bigr|\,
        \uD \varphi(x)^{*} \wmcv{F}(V,\varphi(x))
        \quad \textrm{for $\|W\|$ almost all $x \in \Omega $} \,.
    \end{equation}
\end{lemma}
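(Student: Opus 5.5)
The whole argument rests on the transformation rule from \ref{rem:perp:first_variation_pull_back},
\begin{displaymath}
    \delta_G W(g) = \delta_F V\bigl( \langle g \circ \psi ,\, \uD\varphi \circ \psi \rangle \bigr)
    \quad \text{for $g \in \VF(\Omega)$}\,,
\end{displaymath}
combined with the area formula of \ref{push-forward basic}. Here $\langle g\circ\psi, \uD\varphi\circ\psi\rangle(y) = \uD\varphi(\psi(y))\, g(\psi(y))$.

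\emph{Step 1: the comparison of total variations.} Fix an open set $A$ with compact closure in $\Omega$, and let $g \in \VF(\Omega)$ with $\spt g \subseteq A$ and $|g| \le 1$. Then the vector field $\tilde g = \uD\varphi(\psi)\, g\circ\psi$ is supported in $\psi^{-1}\lIm A \rIm$ and satisfies $|\tilde g| \le M(A)$. Hence
\begin{displaymath}
    \delta_G W(g) \le M(A)\, \|\delta_F V\|(\psi^{-1}\lIm A\rIm) = M(A)\, \psi_\#\|\delta_F V\|(A)\,.
\end{displaymath}
Taking the supremum over $g$ gives the upper bound, and therefore also that $\|\delta_G W\|$ is Radon.

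For the lower bound, I write any admissible $\tilde g$ on $\psi^{-1}\lIm A\rIm$ with $|\tilde g|\le 1$ as $\tilde g = \uD\varphi(\psi)\, g\circ\psi$, where $g = (\uD\varphi)^{-1}\tilde g\circ\varphi$. This $g$ satisfies $|g| \le M(A)$. Consequently
\begin{displaymath}
    \|\delta_F V\|(\psi^{-1}\lIm A\rIm) \le M(A)\, \|\delta_G W\|(A)\,.
\end{displaymath}
Alternatively, this follows by symmetry, applying the upper bound with the roles of $\psi$ and $\varphi$ exchanged and $F = \psi^\# G$ (using the composition remark after Definition~\ref{def:pull-back-integrand}). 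Smooth fields are enough for both suprema.

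\emph{Step 2: the curvature formula.} A consequence of Step 1 is that $\psi_\#$ carries $\|V\|$-null sets to $\|W\|$-null sets and back, and it carries singular parts to singular parts. So it suffices to identify the absolutely continuous parts. By \ref{rem:integral_representation_of_first_variation}, for $g$ supported in the absolutely continuous region,
\begin{displaymath}
    \delta_G W(g) = -\textint{}{} \wmcv{F}(V,y) \bullet \uD\varphi(\psi(y))\, g(\psi(y)) \ud\|V\|(y)\,.
\end{displaymath}
Here I use that $\|V_F\| = \bigl(\textint{}{} F \ud V^{(\cdot)}\bigr)\|V\|$, so that $\mcv F\, d\|V_F\| = \wmcv F\, d\|V\|$.

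Next I change variables with the area formula of \ref{push-forward basic}. Recalling $\|W\| = (\density\circ\psi^{-1})\HM^k\restrict\psi\lIm\Sigma\rIm$, the same computation with the roles of $\psi$ and $\varphi$ exchanged gives
\begin{displaymath}
    \ud\|V\| = \bigl|\tbwedge_k \uD\varphi(x) \circ \project{\Tan^k(\|W\|,x)}\bigr|\, \psi_\#(\cdots)\,.
\end{displaymath}
More precisely, $\varphi_\#$ of $\|W\|$ carries this Jacobian factor. The integral then becomes
\begin{displaymath}
    -\textint{}{} \bigl|\tbwedge_k \uD\varphi(x) \circ \project{\Tan^k(\|W\|,x)}\bigr|\, \uD\varphi(x)^*\wmcv F(V,\varphi(x)) \bullet g(x) \ud\|W\|(x)\,.
\end{displaymath}
On the other hand, the same first variation equals $-\textint{}{} \wmcv G(W,x)\bullet g(x) \ud\|W\|(x)$, because $\|W_G\|$ is weighted by $\textint{}{} G \ud W^{(x)} = G(x,\Tan^k(\|W\|,x))$ by \ref{push-forward basic}. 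Since $g$ is arbitrary, the two densities agree $\|W\|$-a.e., which is \eqref{lem:image_mcv eq2}.

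\emph{Expected obstacle.} The main care is needed in Step 2. First, the Jacobian direction must be handled correctly: I will use \eqref{push-forward basic eq} to identify $\uD\varphi(x)\lIm\Tan^k(\|W\|,x)\rIm$ with $\Tan^k(\|V\|,\varphi(x))$. Second, the singular parts must be separated cleanly. For this I will use the Step 1 comparison on small balls together with the derivative characterization of $\|\delta_F V\|_{\mathrm{sing}}$ from \ref{rem:integral_representation_of_first_variation}, to see that $\psi$ maps the set where the derivative is infinite onto the corresponding set for $W$, up to null sets.
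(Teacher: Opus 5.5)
Your proposal is correct and follows essentially the paper's route. Both inequalities in \eqref{lem:image_mcv eq1} come from the transformation rule of \ref{rem:perp:first_variation_pull_back}, with the lower bound obtained by exchanging the roles of $\psi$ and $\varphi$. Formula \eqref{lem:image_mcv eq2} comes from pushing the representation of $\delta_F V$ forward via the area formula and separating the singular parts through the sets where $\mathbf{D}(\|\delta_F V\|,\|V\|,\cdot)$, respectively $\mathbf{D}(\|\delta_G W\|,\|W\|,\cdot)$, is finite.

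The one point to tighten is that you cannot test with smooth $g$ ``supported in the absolutely continuous region''. Instead, keep the singular terms for arbitrary $g \in \VF(\Omega)$ and conclude by uniqueness of the Lebesgue decomposition with respect to $\|W\|$; the paper does this by an explicit contradiction argument on a compact set $K$ of positive $\|W\|$ measure.
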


\begin{proof}
    Suppose $ A \subseteq \R^m $ is open and $ \Clos{A} \subseteq \Omega $ is compact,
    $g \in \VF(\Omega)$ satisfies $\spt g \subseteq A$ and $\sup \im |g| \le 1$. Then, by
    \ref{rem:integral_representation_of_first_variation} and
    \ref{rem:perp:first_variation_pull_back} we see that
    \begin{multline}
        |\delta_{G} W(g)|
        = |\delta_{F}V(\langle g \circ \psi ,\, \uD \varphi \circ \psi \rangle)|
        \\
        = \bigl| \textint{}{} \uD \varphi(\psi(y)) (g \circ \psi(y))
        \bullet \boldsymbol{\eta}_F (V,y) \ud \| \delta_{F} V \|(y) \bigr|
        \\
        \le M(A) \textint{}{} |g(x)| \ud \psi_{\#}\| \delta_{F} V \|(x)
        \le M(A) \psi_{\#}\| \delta_{F} V \|(A) \,.
    \end{multline}
    Taking supremum over all $g \in \VF(\Omega)$ with $\spt g \subseteq A$ and $|g| \le 1$ we get
    \begin{equation}
        \label{eq:pull_back:first_var:deltaW_deltaV}
        \| \delta_{G} W \|(A)
        \le M(A) \psi_{\#} \| \delta_{F}V \|( A ) \,.
    \end{equation}
    Replacing $ G $, $ W $, $ A $ and $ \psi $ with $ F $, $ V $, $ \varphi[A] $ and $ \varphi $, we also obtain 
    \begin{equation}
        \label{eq:pull_back:first_var:deltaV_deltaW}
        \| \delta_{F} V \|(\varphi[A])
        \le M(A) \varphi_{\#} \| \delta_{G}W \|( \varphi[A] ), \quad  \psi_{\#} \| \delta_{F}V \|( A )
        \le M(A) \| \delta_{G} W \|(A)
        \,,
    \end{equation}
    whence we conclude that \eqref{lem:image_mcv eq1} holds and that $ \| \delta_G W \| $ is a Radon
    measure over $ \Omega $.

    Employing \ref{rem:integral_representation_of_first_variation}, \ref{def:wmcv}, and
    \ref{rem:perp:first_variation_pull_back} we obtain for $ g \in \VF(\Omega) $
    \begin{multline}
        \delta_{G} W(g)
        = \delta_{F} V(\langle g \circ \psi ,\, \uD \varphi \circ \psi \rangle)
        \\
        =
        \textint{}{} \boldsymbol{\eta}_{F}(V,y) \bullet \uD \varphi(\psi(y)) g( \psi(y) )
        \ud \| \delta_{F} V \|_{\mathrm{sing}}(y)
        \\
        - \textint{}{} \wmcv{F}(V,y) \bullet \uD \varphi(\psi(y)) g( \psi(y) ) \ud \| V \|(y)
        \,
    \end{multline}
    and, since $V = \varphi_{\#}W$, we conclude
    \begin{multline}
        \label{eq:pull_back:first_variation_of_W_wrt_G}
        \delta_{G} W(g) =
        \textint{}{} \uD \varphi(x)^* \boldsymbol{\eta}_{F}(V,\varphi(x)) \bullet  g(x)
        \ud \psi_{\#} \| \delta_{F} V \|_{\mathrm{sing}}(x)
        \\
        - \textint{}{} \bigl| \tbwedge_{\vdim} \uD \varphi(x) \circ \project{\Tan^{\vdim}(\|W\|,x)} \bigr| \,
        \uD \varphi(x)^* \wmcv{F}(V,\varphi(x)) \bullet g( x ) \ud \| W \|(x) \,.
    \end{multline}
     Define Borel sets
      \begin{displaymath}
          B = U \cap \bigl\{x : \mathbf{D} \bigl( \| \delta_F V \|, \| V \|, x \bigr) < \infty \bigr\}
          \quad \text{and} \quad
          C = \psi \lIm B \rIm \,.
      \end{displaymath}
      Using~\eqref{lem:image_mcv eq1} we get
      \begin{displaymath}
          C = \Omega \cap \bigl\{ y : \mathbf{D} \bigl( \| \delta_G W \|, \| W \|, y \bigr) < \infty \bigr\} \,.
      \end{displaymath}
      Recall~\ref{rem:integral_representation_of_first_variation} to see
      \begin{gather}
        \bigl( \| \delta_G W \|_{{\rm sing}} + \psi_\# \| \delta_F V \|_{{\rm sing}}\bigr)(C) = 0 \quad \text{and} \quad 
          \| W \|(\Omega \without C) = 0 \,.
    \end{gather}
    Define
    \begin{displaymath}
        Q(x) = \bigl| \tbwedge_{\vdim} \uD \varphi(x) \circ \project{\Tan^{\vdim}(\|W\|,x)} \bigr| \,
        \uD \varphi(x)^* \wmcv{F}(V,\varphi(x))
        \quad \textrm{for $ \| W \| $ a.e.\ $ x \in \Omega $} \,.
    \end{displaymath}
    If \eqref{lem:image_mcv eq2} were not true, we could find a compact set $ K \subseteq C $ such
    that $ \| W \|(K) > 0 $, $ u \in \sphere{\adim-1}$, and $ \gamma > 0 $ such that
    \begin{displaymath}
        \bigl( Q(x) - \wmcv{G}(W,x)\bigr) \bullet u \geq \gamma \quad \textrm{for $ x \in K $} \,.
    \end{displaymath}
    Then, we would choose an open set $ A \subseteq \Omega $ such that
    \begin{multline}
        \textint{A \without K}{} \bigl| Q(x) - \wmcv{G}(W,x) \bigr| \ud \| W \|(x)  \\ +
        \textint{A}{} \bigl|\uD \varphi(x)^* \boldsymbol{\eta}_{F}(V,\varphi(x))\bigr|
        \ud \psi_\#\| \delta_{F} V \|_{\mathrm{sing}}(x) + \| \delta_{G} W \|_{\mathrm{sing}}(A)
        \leq \tfrac{\gamma}{2} \| W \|(K)
    \end{multline}
    and a smooth function $ \zeta $ with compact support in $ A $ such that $ \zeta(x) = 1 $ for
    $ x \in K $ and $ 0 \leq \zeta(x) \leq 1 $ for $ x \in \Omega $.
    Using~\eqref{eq:pull_back:first_variation_of_W_wrt_G} we get
    \begin{multline}
        \textint{}{} \bigl( Q(x)- \wmcv{G}(W, x)\bigr) \bullet g( x ) \ud \| W \|(x) \\
        = \textint{}{} \uD \varphi(x)^* \boldsymbol{\eta}_{F}(V,\varphi(x)) \bullet g(x)
        \ud \psi_\#\| \delta_{F} V \|_{\mathrm{sing}}(x) - \textint{}{} \boldsymbol{\eta}_{G}(W,x) \bullet g(x)
        \ud \| \delta_{G} W \|_{\mathrm{sing}}(x)
    \end{multline}
    whenever $ g \in \VF(\Omega) $ so choosing $ g = \zeta \cdot u $ we would infer that
    \begin{displaymath}
        \textint{K}{} \bigl( Q(x)- \wmcv{G}(W, x)\bigr) \bullet u \ud \| W \|(x)
        \leq \tfrac{\gamma}{2} \| W \|(K) \,,
    \end{displaymath}
    a contradiction.
\end{proof}

\begin{lemma}
    \label{lem:pull_back:Lebesgue_points}
    If $ U $, $ V $ and $ F $ are as in \ref{push-forward basic}, $\| \delta_F V \|$ is a Radon
    measure and $ h $ is a mean $ F $-curvature vector of $ V $ (cf.~\ref{def:wmcv}), then there
    exists a set $B \subseteq U$ with $\|V\|(U \without B) = 0$ such that the following property
    holds.
    
    If $ \Omega \subseteq \R^{m} $, $ D $ is the set of all diffeomorphisms mapping $ U $ onto
    $ \Omega $ and $(b,\psi) \in B \times D $ then, defining $ a = \psi(b) $,
    $ \varphi = \psi^{-1} $, $ G = \varphi^\# F $, $ W = \psi_\# V $,
    \begin{multline}
        \label{lem:pull_back:Lebesgue_points eq}
        \tau(x) = \project{\Tan^{\vdim}(\|W\|,x)}
        \quad \text{and} \quad
        g(x) = \bigl| \tbwedge_{\vdim} \uD \varphi(x) \circ \tau(x) \bigr| \uD \varphi(x)^{*}  h( \varphi(x) )
        \\
        \text{for every $ x \in \Omega $ such that $ \Tan^{\vdim}(\| W \|,x) \in \grass{m}{\vdim} $,}
    \end{multline}
    there holds
    \begin{gather}
        \mathbf{D}\bigl(\|\delta_{G}W\|_{\mathrm{sing}}, \| W \|,a) = 0 \,,
        \quad
        \density^{\vdim}(\|W\|,a) = \density^{\vdim}(\|V\|,b) \,,
        \\
        \Tan^{\vdim}(\|W\|,a) =  \uD \psi(b) \lIm \Tan^{\vdim}(\|V\|,b) \rIm  \in \grass{m}{\vdim}  \,,
        \\
        \text{$a$ is a~$\|W \|$~Lebesgue point of $g$ and $\tau$} \,. 
    \end{gather}
\end{lemma}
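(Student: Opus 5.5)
The key requirement is that $B$ does not depend on $\psi$. I will therefore define $B$ by conditions on $V$ alone, and then check that each conclusion transfers through an arbitrary diffeomorphism using only local bi-Lipschitz bounds at $b$.

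\textbf{Choice of $B$.} Let $B$ be the set of $b \in \Sigma$ such that:
\begin{enumerate}
\item $0<\density^{\vdim}(\|V\|,b)<\infty$;
\item $\Tan^{\vdim}(\|V\|,b)\in\grass{m}{\vdim}$, and the rescalings $(\scale{1/r}\circ\trans{-b})_\#\|V\|$ converge to $\density^{\vdim}(\|V\|,b)\,\HM^{\vdim}\restrict\Tan^{\vdim}(\|V\|,b)$ as $r\to0^+$;
\item $b$ is a $\|V\|$ Lebesgue point of $h$ and of $y\mapsto\project{\Tan^{\vdim}(\|V\|,y)}$;
\item $\mathbf{D}(\|\delta_FV\|_{\mathrm{sing}},\|V\|,b)=0$.
\end{enumerate}
Each condition holds $\|V\|$ almost everywhere. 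For (1) and (2) use rectifiability, \cite[3.5]{Allard1972} and \ref{push-forward basic}. For (3) use the Lebesgue differentiation theorem \cite[2.9]{Federer1969}, applied to $h\in\Lploc{1}(\|V\|)$ (local summability comes from the representation in \ref{rem:integral_representation_of_first_variation}). For (4) use \ref{rem:integral_representation_of_first_variation}. So $\|V\|(U\without B)=0$.

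\textbf{Comparison of balls.} Fix $b\in B$ and $\psi\in D$, and put $a=\psi(b)$. Choose $L\ge1$ such that $\psi$ and $\varphi$ are $L$-Lipschitz near $b$ and $a$. Then for small $r$
\[
\oball{b}{r/L}\subseteq\varphi\lIm\cball{a}{r}\rIm\subseteq\cball{b}{Lr}.
\]
By \ref{push-forward basic}, $\|W\|=\psi_\#(J\,\|V\|)$ with $J(y)=|\tbwedge_{\vdim}\uD\psi(y)\circ\project{\Tan^{\vdim}(\|V\|,y)}|$, and $J$ is bounded above and below near $b$. Combining this with (1) gives
\[
c^{-1}r^{\vdim}\le\|W\|(\cball{a}{r})\le c\,r^{\vdim}.
\]
Hence for any $f$ on $\Omega$,
\[
\frac{1}{\|W\|(\cball ar)}\textint{\cball ar}{}|f-f(a)|\ud\|W\| \le C\,\frac{1}{\|V\|(\cball b{Lr})}\textint{\cball b{Lr}}{}|f\circ\psi-f(a)|\ud\|V\|.
\]
So it suffices that $b$ be a $\|V\|$ Lebesgue point of $f\circ\psi$.

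\textbf{Tangent plane and $\tau$.} The tangent identity is \eqref{push-forward basic eq}. Its right side lies in $\grass m\vdim$ because $\uD\psi(b)$ is injective. We have
\[
\tau\circ\psi(y)=\project{\uD\psi(y)\lIm\Tan^{\vdim}(\|V\|,y)\rIm}.
\]
By \ref{mr:Lebesgue_points_and_taking_image}, applied with the continuous invertible map $y\mapsto\uD\psi(y)$ and condition (3), $b$ is a Lebesgue point of $\tau\circ\psi$.

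\textbf{The function $g$.} We have
\[
g\circ\psi=u\cdot\bigl(\uD\varphi\circ\psi\bigr)^{*}h, \qquad u=|\tbwedge_{\vdim}(\uD\varphi\circ\psi)\circ(\tau\circ\psi)|.
\]
Here $u$ is a locally Lipschitz function of the pair $(\uD\varphi\circ\psi,\ \tau\circ\psi)$, which is bounded with continuous first entry. By \ref{mr:Lebesgue_points_and_taking_image}, $b$ is a Lebesgue point of $u$, and $u$ is bounded near $b$. The splitting
\[
|pq-p(b)q(b)|\le|p|\,|q-q(b)|+|q(b)|\,|p-p(b)|
\]
with $p$ bounded then shows $b$ is a Lebesgue point of the product. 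So $a$ is a Lebesgue point of $g$.

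\textbf{Density at $a$.} Push the blow-up in (2) forward by $\psi$. Since $\psi$ is $\cnt{1}$, $(\scale{1/r}\circ\trans{-a}\circ\psi\circ\trans b\circ\scale r)\to\uD\psi(b)$ locally uniformly. The rescalings of $\|W\|$ at $a$ therefore converge to
\[
\density^{\vdim}(\|V\|,b)\,\HM^{\vdim}\restrict\uD\psi(b)\lIm\Tan^{\vdim}(\|V\|,b)\rIm.
\]
Here the Jacobian factor $J(b)$ exactly compensates the change of $\HM^{\vdim}$ under the linear map $\uD\psi(b)$; this is the area formula on planes. This yields $\density^{\vdim}(\|W\|,a)=\density^{\vdim}(\|V\|,b)$.

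\textbf{Singular part (main obstacle).} The step I expect to be most delicate is identifying $\|\delta_GW\|_{\mathrm{sing}}$. Formula \eqref{eq:pull_back:first_variation_of_W_wrt_G} writes $\delta_GW$ as a $\psi_\#\|\delta_FV\|_{\mathrm{sing}}$-integral plus a $\|W\|$-absolutely continuous term, and $\psi_\#\|\delta_FV\|_{\mathrm{sing}}\perp\|W\|$. From this I would read off
\[
\|\delta_GW\|_{\mathrm{sing}}\le M\,\psi_\#\|\delta_FV\|_{\mathrm{sing}} \quad\text{locally}.
\]
Then
\[
\|\delta_GW\|_{\mathrm{sing}}(\cball ar)\le M\,\|\delta_FV\|_{\mathrm{sing}}(\cball b{Lr}),
\]
and dividing by $\|W\|(\cball ar)\ge c^{-1}\|V\|(\cball b{Lr})$ gives zero in the limit by (4). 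The care needed is in this identification of singular parts, and in using the two-sided density bounds from (1) to absorb the change of radii $r\mapsto Lr$, $r\mapsto r/L$ uniformly in $\psi$.
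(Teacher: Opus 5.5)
Your proposal is correct. For the tangent plane, $\tau$ and $g$ it is essentially the paper's argument: bi-Lipschitz comparison of balls, two-sided bounds on $\|W\|(\cball ar)$, transfer of Lebesgue points through \ref{mr:Lebesgue_points_and_taking_image} with $\uD\psi$, and a splitting of $g\circ\psi$.

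The genuine difference is the density identity. The paper puts two extra conditions into $B$. The first is a $\cnt{1}$ submanifold $Z_b$ that agrees with $\Sigma$ near $b$ up to $\HM^k$-density zero. The second is that $b$ be an $\HM^k\restrict\Sigma$ Lebesgue point of $\density^k(\|V\|,\cdot)$. It then computes $\|W\|(\cball ar)$ directly by the area formula, in which the Jacobian disappears exactly, since $\int_{\varphi[\cball ar]\cap\Sigma}J\ud\HM^k=\HM^k(\cball ar\cap\psi[\Sigma])$. Finally, $\density^k(\HM^k\restrict\psi[\Sigma],a)=1$ is inherited from $\psi(Z_b)$.

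You instead use uniqueness of the tangent measure at $b$ and linearize $\psi$. This avoids $Z_b$ and gives the full blow-up of $\|W\|$ at $a$, not just its density. But the measure you push forward is $J\|V\|$, and after rescaling its weight is $J(b+rz)$, not $J(b)$. Since $J$ depends on the merely measurable map $\Tan^k(\|V\|,\cdot)$, the ``compensation by $J(b)$'' needs $b$ to be a $\|V\|$ Lebesgue point of $J$. This follows from your condition (3) exactly as you argued for $u$, but you should say so. The rescalings in (2) should also carry the factor $r^{-k}$.

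For the singular part, reading off $\|\delta_GW\|_{\mathrm{sing}}\le M\,\psi_\#\|\delta_FV\|_{\mathrm{sing}}$ from \eqref{eq:pull_back:first_variation_of_W_wrt_G} via $\psi_\#\|\delta_FV\|_{\mathrm{sing}}\perp\|W\|$ is correct. It is more explicit than the paper's one-line appeal to \ref{lem:image_mcv}\eqref{lem:image_mcv eq1}. That inequality compares total variations, and passing to singular parts tacitly uses that $\|W\|$ and $\psi_\#\|V\|$ are mutually absolutely continuous near $a$.
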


\begin{proof}
    Suppose $ \Sigma = U \cap \{y : 0< \density^{\vdim}(\| V \|,y) < \infty\} $ and $B$ is the set of
    points $b \in \Sigma$ such that
    \begin{enumerate}[label=(\alph*)]
    \item there exists a $ {\vdim} $-dimensional submanifold $ Z_b \subseteq U $ of class $ \cnt{1} $ such
        that $ b \in Z_b $ and
        \begin{displaymath}
            \density^{\vdim}(\HM^{\vdim} \restrict(Z_b \without \Sigma), b) = \density^{\vdim}(\HM^{\vdim} \restrict(\Sigma \without Z_b), b) =0 \,,
        \end{displaymath}
    \item  $\Tan^{\vdim}(\|V\|,b) \in \grass{\adim}{\vdim} $, 
    \item $b$ is a~$\|V\|$ Lebesgue point of $h $ and $\project{\Tan^{\vdim}(\|V\|,\cdot)}$,
    \item $b$ is a~$\HM^{\vdim} \restrict \Sigma$ Lebesgue point of $\density^{\vdim}(\|V\|,\cdot)$, 
    \item $\mathbf{D}\bigl(\|\delta_{F}V\|_{\mathrm{sing}}, \| V \|,b) = 0 $.
    \end{enumerate}
    Then $\|V\|(U \without B) = 0$ by \cite[2.10.19(4), 2.9.9, 2.9.10]{Federer1969}
    and~\cite[3.5]{Allard1972}. We fix $ (b, \psi) \in B \times D $ and define
    \begin{gather}
        r_0 = \tfrac{1}{2}\dist(b, \R^m \without U)\,,
        \quad K = \cball{b}{r_0}\,,
        \\
        \varphi = \psi^{-1} \,,
        \quad
        a = \psi(b) \,,
        \quad
        G = \varphi^{\#}F \,, \quad W = \psi_\# V \,,
        \\
        \lambda = \sup \| \uD \psi[K] \| + \sup \| \uD \varphi[\psi(K)] \| +
        \Lip (\uD \psi|K)
        + \Lip \bigl(\uD \varphi|\psi \lIm K \rIm \bigr) \,,
        \\
        \text{$ \tau $ and $ g $ as in \eqref{lem:pull_back:Lebesgue_points eq}} \,,
        \\
        J_V\psi(y) =  \bigl| \tbwedge_{\vdim} \uD \psi(y) \circ \project{\Tan^{\vdim}(\|V\|,y)} \bigr|
        \quad \textrm{for $ \| V \| $ a.e.\ $ y \in U $} \,,
        \\
        J_W\varphi(x) =  \bigl| \tbwedge_{\vdim} \uD \varphi(x) \circ \tau(x) \bigr|
        \quad \textrm{for $ \| W \| $ a.e.\ $ x \in \Omega $} \,.
    \end{gather}
    Note that
    \begin{equation}
        \label{eq:perp:ball_inclusions_under_diffeomorphism}
        \begin{gathered}
            \cball{a}{\lambda^{-1} r} \subseteq \psi \lIm \cball{b}{r} \rIm \subseteq \cball{a}{\lambda r}
            \quad \text{for $0 < r \le r_0$}
            \\
            \text{and} \quad
            \cball{b}{\lambda^{-1} r} \subseteq \varphi \lIm \cball{a}{r} \rIm \subseteq \cball{b}{\lambda r}
            \quad \text{for $0 < r \le \lambda^{-1} r_0$} \,.
        \end{gathered}
    \end{equation}
    We infer 
    \begin{displaymath}
        \density^{\vdim}\bigl(\HM^{\vdim} \restrict\bigl(\psi(Z_b) \without \psi \lIm \Sigma \rIm\bigr), a\bigr)
        = \density^{\vdim}\bigl(\HM^{\vdim} \restrict\bigl(\psi \lIm \Sigma \rIm \without \psi(Z_b)\bigr), a\bigr) = 0 \,.
    \end{displaymath}
    Since $ \psi(Z_b) $ is a $ {\vdim} $-dimensional submanifold of class $ \cnt{1} $ and $ \density^{\vdim}(\HM^{\vdim} \restrict \psi(Z_b), a) = 1 $,  we see 
    \begin{equation} \label{eq:perp:density 1}
        \density^{\vdim}(\HM^{\vdim} \restrict \psi \lIm \Sigma \rIm, a) = 1 \,.
    \end{equation} 
    Noting that 
    \begin{multline}
        \inf \bigl \{ | \tbwedge_{\vdim} \uD \psi(y) \circ T |
        : y \in K ,\, T \in \pgrass{\adim}{\vdim} \bigr\}^{-1}
        \\
        = \sup \bigl \{ | \tbwedge_{\vdim} \uD \varphi(x) \circ T |
        : x \in \psi \lIm K \rIm ,\, T \in \pgrass{\adim}{\vdim} \bigr\}
        \le \lambda^{\vdim} \,,
    \end{multline}
    we conclude
    \begin{equation}
        \label{eq:perp: estimates jacobian}
        \lambda^{\vdim} \geq J_V \psi(y) \ge \lambda^{-\vdim}
        \quad \textrm{for $\|V\|$~almost all~$y \in K$} 
    \end{equation}
    and we use the area formula and~\eqref{eq:perp:ball_inclusions_under_diffeomorphism} to~see that
    \begin{equation}\label{eq:perp:density bounds 1}
        \lambda^{\vdim}\, \measureball{\| V \|}{\cball{b}{\lambda r}}
        \geq  \measureball{\| W \|}{\cball{a}{r}}
        = \textint{\varphi \lIm \cball{a}{r} \rIm}{}  J_V\psi(y) \ud \| V \|(y) 
        \geq  \lambda^{-\vdim}\, \measureball{\| V \|}{\cball{b}{\lambda^{-1}\, r}}
    \end{equation}
    for $ 0 < r < \lambda^{-1}\, r_0 $. Employing again the area formula~\cite[3.2.20]{Federer1969}
    we~observe that
    \begin{multline}
        \measureball{\| W \|}{\cball{a}{r}}
        = \textint{\varphi \lIm \cball ar \rIm \cap \Sigma}{}
        J_V\psi(y)\,\density^{\vdim}(\|V\|,y) \ud \HM^{\vdim}(y)
        \\
        = \textint{\varphi \lIm \cball ar \rIm \cap \Sigma}{}
        \bigl( \density^{\vdim}(\|V\|,y) - \density^{\vdim}(\|V\|,b) \bigr)
        J_V\psi(y)
        \ud \HM^{\vdim}(y)
        \\
        \quad  +  \density^{\vdim}(\|V\|,b) \,
        \textint{\varphi \lIm \cball ar \rIm \cap \Sigma}{}
        J_V\psi(y)
        \ud \HM^{\vdim}(y)
        \\
        = \textint{\varphi \lIm \cball ar \rIm \cap \Sigma}{}
        \bigl( \density^{\vdim}(\|V\|,y) - \density^{\vdim}(\|V\|,b) \bigr)
        J_V\psi(y)
        \ud \HM^{\vdim}(y)
        \\
        \quad  +  \density^{\vdim}(\|V\|,b) \,
        \HM^{\vdim}\bigl(\cball{a}{r} \cap \psi \lIm \Sigma \rIm \bigr)
    \end{multline}
    and, since $b$ is a~$\HM^{\vdim} \restrict \Sigma$ Lebesgue point of
    $\density^{\vdim}(\|V\|,\cdot)$ and $ \density^{\vdim}(\HM^{\vdim} \restrict \Sigma, b) = 1 $, we conclude
    employing \eqref{eq:perp:ball_inclusions_under_diffeomorphism}, \eqref{eq:perp:density 1} and
    \eqref{eq:perp: estimates jacobian} that
    \begin{displaymath}
        \density^{\vdim}(\| W \|, a) = \density^{\vdim}(\| V \|, b) \,.
    \end{displaymath}
    Moreover, since $ \mathbf{D}\bigl(\| \delta_F V \|_{\mathrm{sing}}, \| V \|,b\bigr) = 0 $ and
    $ 0 <\density^{\vdim}(\| V \|, b) < \infty $, we employ~\ref{lem:image_mcv}\eqref{lem:image_mcv eq1}
    and \eqref{eq:perp:density bounds 1} to see that
    \begin{displaymath}
        \mathbf{D}\bigl( \|\delta_{G}W\|_{\mathrm{sing}}, \| W \|, a \bigr) = 0 \,.
    \end{displaymath}
    Recalling \ref{push-forward basic}\eqref{push-forward basic eq}, we see that
    \begin{displaymath}
        \Tan^{\vdim}(\| W \|, a) = \Der \psi(b)[\Tan^{\vdim}(\| V \|, b)] \in \grass{m}{\vdim}
    \end{displaymath}
    and we estimate for $0 < r \le \lambda^{-1} r_0$
    \begin{multline}
        \label{eq:perp:tangent_Lebesgue_points}
        \textint{\cball ar}{} 
        | \tau(x) - \tau(a) | 
        \ud \|W\|(x)
        = \textint{\varphi \lIm \cball ar \rIm}{}
        | \tau(\psi(y))- \tau(\psi(b)) |
        J_V \psi(y)
        \ud \|V\|(y)
        \\
        = \textint{\varphi \lIm \cball ar \rIm}{}
        \bigl| \project{\bigl( \Der \psi(y)[\Tan^{\vdim}(\|V\|,y)]\bigr)}
        - \project{\bigl( \Der \psi(y)[\Tan^{\vdim}(\|V\|,b)]\bigr)} \bigr|
        J_V \psi(y)
        \ud \|V\|(y)
        \\
        \le \lambda^{\vdim} 
        \textint{\cball{b}{\lambda r}}{}
        \bigl| \project{\bigl( \Der \psi(y)[\Tan^{\vdim}(\|V\|,y)]\bigr)}
        - \project{\bigl( \Der \psi(y)[\Tan^{\vdim}(\|V\|,b)]\bigr)} \bigr|
        \ud \|V\|(y) \,.
    \end{multline}
    
    Applying~\ref{mr:Lebesgue_points_and_taking_image} with $\uD \psi$ in place of $h$, we see that
    $ b $ is a $ \| V \| $ Lebesgue point of
    $ \bigl[ U \ni y \mapsto \project{\bigl( \Der \psi(y)[\Tan^{\vdim}(\|V\|,y)]\bigr)} \bigr] $,
    and we~conclude, employing \eqref{eq:perp:density bounds 1}, that
    \begin{displaymath}
        \lim_{r \to 0} \bigl( \measureball{\| W \|}{\cball ar} \bigr)^{-1} \textint{\cball ar}{}
        | \tau(x) - \tau(a) |
        \ud \|W\|(x) = 0 \,.
    \end{displaymath}

    Noting that $ \tau(\psi(y)) = \project{\bigl(\Der \psi(y)[\Tan^{\vdim}(\| V \|,y)] \bigr)} $ for
    $ \| V \| $ almost all $ y \in U $, we invoke Lemma~\ref{mr:jacobian_multiplicative} with
    $ \Der \varphi(\psi(y)) $ and $ \Der \psi(y) \circ \project{\Tan^{\vdim}(\| V \|,y)} $ in place of
    $ M $ and $ L $, to conclude
    \begin{displaymath}
        J_W\varphi(\psi(y)) J_V \psi(y) = \bigl| \tbwedge_{\vdim} \project{\Tan^{\vdim}(\| V \|,y)}\bigr| = 1
        \quad \text{for $\|V\|$ almost all $y \in U$} \,. 
    \end{displaymath}
    Moreover,  since $\Lip \uD \varphi|\psi \lIm K \rIm \le \lambda < \infty$, we have
    \begin{displaymath}
        \kappa = \Lip \bigl[
        \psi \lIm K \rIm \times \pgrass{\adim}{\vdim} \ni (x,T) \mapsto \bigl| \tbwedge_{\vdim} \uD \varphi(x) \circ T \bigr|
        \bigr] < \infty \,.
    \end{displaymath}
    Henceforth, we get  for $0 < r \le \lambda^{-1} r_0$  
    \begin{flalign*}
      & \textint{\cball ar}{}
        | g(x) - g(a) |
        \ud \|W\| (x)
      \\
      & \quad = 
        \textint{\varphi \lIm \cball ar \rIm}{}
        | g(\psi(y)) - g(\psi(b))|\, J_V \psi(y)
        \ud \|V\| (y)
      \\
      & \quad \le \textint{\cball{b}{\lambda r}}{}
        \bigl| \uD \varphi(\psi(y))^* (h(y) - h(b)) \bigr|
        \ud \| V \|(y)
      \\
      & \quad \quad + \textint{\cball{b}{\lambda r}}{} \bigl|
        \bigl( \uD \varphi(\psi(y))^*- \uD \varphi(\psi(b))^* \bigr )h(b)
        \bigr|  \ud \| V \|(y)
      \\
      & \quad \quad + \textint{\cball{b}{\lambda r}}{} \bigl| \uD \varphi(a)^* h(b) \bigr|\,
        \bigl| J_W\varphi(\psi(y)) - J_W \varphi(\psi(b)) \bigr) \bigr| \, J_V \psi(y)\,
        \ud \|V\| (y)
      \\
      & \quad \le 
        \lambda\, \textint{\cball{b}{\lambda r}}{}
        \bigl| h(y) - h(b) \bigr|
        \ud \| V \|(y)
      \\
      & \quad \quad 
        + \lambda^3 r\, \bigl| h(b) \bigr|\, \measureball{\| V \|}{\cball{b}{\lambda r}}
      \\
      &\quad \quad + \lambda^{\vdim+1}
        | h(b) | \kappa \textint{\cball{b}{\lambda r}}{}
        (\lambda^2 r + \| \tau(\psi(y)) - \tau(\psi(b))\|)
        \ud \|V\| (y) \,. 
    \end{flalign*}
    Dividing by $\measureball{\| W \|}{\cball ar} $ and letting $ r \to 0 $, we employ
    \eqref{eq:perp:density bounds 1} to conclude that
    \begin{displaymath}
        \lim_{r \to 0} \textfint{\cball ar}{} | g(x) - g(a) | \ud \|W\| (x) = 0 \,.
        \qedhere
    \end{displaymath}
\end{proof}

\begin{remark}
    The function $ g $ in \ref{lem:pull_back:Lebesgue_points} is a mean $ G $-curvature of $ W $ by
    \ref{lem:image_mcv}\eqref{lem:image_mcv eq2}.
\end{remark}

\begin{lemma}
    \label{lem:perp:pull_back_by_auto}
    Suppose $\Omega \subseteq \R^m$ is open, $ F $ is a $ {\vdim} $-dimensional $ \cnt{1} $-integrand on
    $ \Omega $, $A \in \End{\R^m}$ is an isomorphism and $T \in \grass mk$.
    
    Then the following statements hold.
    \begin{enumerate}
    \item
        \label{lem:perp:pull_back_by_auto_1}
        If $A \circ \project{T} = \project{T}$ then $  B_{A^{\#}F}(x,T) = A^* \circ B_{F}(Ax,T) $ for $ x \in \Omega $.
    \item
        \label{lem:perp:pull_back_by_auto_2}
        If $A \circ \project{T} = \project{T}$ and $ \im (A \circ \perpproject{T}) = \ker P_F(Ax,T)$
        then
  	\begin{displaymath}
            P_{A^{\#}F}(x,T) = \project{T}
            \quad \text{and} \quad
            \langle \sigma^*X ,\, \uD A^{\#}F (x,T) \rangle  = 0
            \quad \text{for $X \in \Tan(\grass{\adim}{\vdim},T)$} \,.
  	\end{displaymath}
  	In~particular, there exists $0 < \Gamma < \infty$ such that
  	\begin{displaymath}
            |A^{\#}F(x,S) - A^{\#}F(x,T)| \le \Gamma \| \project{S} - \project{T} \|^2
            \quad \text{for $S \in \grass mk$}\,.
  	\end{displaymath}
    \end{enumerate}
\end{lemma}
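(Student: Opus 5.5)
The plan is to compute $B_{A^\# F}(x,T)$ straight from the characterization in \ref{rem:BF_as_derivative}, as a derivative along the curve $L_t = \id{\R^m} + tL$. Write $G = A^\# F$. By Definition~\ref{def:pull-back-integrand},
\begin{displaymath}
G(x,S) = F(Ax, A\lIm S\rIm)\,|\tbwedge_{\vdim}(A\circ\project{S})| \,.
\end{displaymath}
Hence
\begin{displaymath}
G(x,L_t\lIm T\rIm)\,|\tbwedge_{\vdim}(L_t\circ\project{T})|
= F(Ax, AL_t\lIm T\rIm)\,|\tbwedge_{\vdim}(A\circ \project{L_t[T]})|\,|\tbwedge_{\vdim}(L_t\circ \project{T})| \,.
\end{displaymath}
Lemma~\ref{mr:jacobian_multiplicative}, applied with $M=A$ and $L=L_t\circ\project{T}$, combines the last two factors into $|\tbwedge_{\vdim}(A L_t \project{T})|$. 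Now write $AL_t = (\id{} + tALA^{-1})A$ and use $A\circ\project{T}=\project{T}$, which gives $A\lIm T\rIm = T$ and $AL_t\project{T} = (\id{}+tALA^{-1})\project{T}$. The expression therefore equals
\begin{displaymath}
F\bigl(Ax,(\id{}+tALA^{-1})\lIm T\rIm\bigr)\,\bigl|\tbwedge_{\vdim}\bigl((\id{}+tALA^{-1})\circ\project{T}\bigr)\bigr| \,.
\end{displaymath}
Differentiating at $t=0$ yields $B_G(x,T)\bullet L = B_F(Ax,T)\bullet ALA^{-1}$.

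This formula has to be reconciled with the claimed $A^*\circ B_F(Ax,T)$. Rewriting, $B_F\bullet ALA^{-1} = A^*B_F A^{-*}\bullet L$. By \ref{rem:P_projection}, $B_F(Ax,T) = B_F(Ax,T)\circ\project{T}$. Taking adjoints in $A\project{T}=\project{T}$ gives $\project{T}A^* = \project{T}$, so $\project{T} = \project{T}A^{-*}$, and hence $B_F A^{-*} = B_F\project{T}A^{-*} = B_F\project{T} = B_F$. This proves (1).

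For (2), take adjoints in (1) and divide by $G(x,T)=F(Ax,T)$. The factor $|\tbwedge_k(A\project{T})|=1$ since $A\project{T}=\project{T}$, so $P_G(x,T) = P_F(Ax,T)\circ A$. This map equals $\project{T}$ on $T$, because $P_F$ restricts to the identity on $T=\im P_F$. On $T^\perp$ it is $P_F(Ax,T)\circ A\circ\perpproject{T}$, which vanishes because $\im(A\perpproject{T}) = \ker P_F(Ax,T)$. Thus $P_G(x,T)=\project{T}$, i.e.\ $B_G(x,T) = G(x,T)\project{T}$. Comparing with the characterization of $B_G$ in \ref{rem:BF_as_derivative}, the term
\begin{displaymath}
\bigl\langle \perpproject{T} L\project{T}+(\perpproject{T}L\project{T})^*,\ \uD G(x,T)\circ\sigma^*\bigr\rangle
\end{displaymath}
must vanish for every $L$. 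Every $X\in\Tan(\grass mk,T)$ is of the form $\perpproject{T}L\project{T}+(\cdots)^*$ by the tangent-space description in the preliminaries (take $L=X$), so $\langle\sigma^*X,\uD G(x,T)\rangle=0$.

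The final estimate is a Taylor expansion of $S\mapsto G(x,S)$ on the compact manifold $\grass mk$ around its critical point $T$. This step is where I expect the main obstacle: a quadratic bound requires $G(x,\cdot)$ to be $\cnt{2}$, or at least to have a Lipschitz differential, whereas $F$ is only assumed $\cnt{1}$. I would handle this by assuming, as intended in the applications, that $F$ is $\cnt{2}$ (the $F$ of~\ref{mr:anisotropic_mean_curvature_vector} built from a $\cnt{2}$ norm is). Near $T$, compare $G(x,S)$ with $G(x,T)$ along a geodesic in $\grass mk$ of length comparable to $\|\project{S}-\project{T}\|$. The first-order term vanishes by what was just shown, and the second-order term is bounded by $\sup\|\uD^2 G(x,\cdot)\|$ times the squared length. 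Away from $T$, use the trivial bound $2\sup|G(x,\cdot)|$, which is dominated by $\Gamma\|\project S-\project T\|^2$ once $\|\project S-\project T\|$ is bounded below. The resulting $\Gamma$ depends on $x$, which suffices for the statement.
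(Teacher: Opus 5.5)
Your proof is correct and follows the paper's argument. Part (1) goes by differentiating along $L_t$ with Lemma~\ref{mr:jacobian_multiplicative}; the paper skips your conjugation and reconciliation step by observing directly that $A\circ L_t\circ\project{T} = (\id{\R^m}+t\,A\circ L)\circ\project{T}$. Part (2) reads off $P_{A^{\#}F}(x,T)=\project{T}$ from (1) and compares with the formula for $B_{A^{\#}F}$ in~\ref{rem:BF_as_derivative}, and the postscript comes from Taylor's formula plus compactness of $\grass{m}{k}$. Your regularity caveat is justified: the quadratic bound fails for a general $\cnt{1}$ integrand (take $A=\id{\R^m}$ and $F(S)=1+|\project{S}-\project{T}|^{3/2}$), so the paper's appeal to Taylor's formula tacitly needs $\cnt{2}$ regularity, which holds in the intended application where $F$ comes from a $\cnt{2}$ norm as in~\ref{mr:anisotropic_mean_curvature_vector}.
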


\begin{proof}
    We prove \ref{lem:perp:pull_back_by_auto_1}. Let $L \in \End{\R^{\adim}}$ and set
    $L_{t} = \id{\R^{\adim}} + t L$ for $t \in \R$. Recall~\ref{mr:jacobian_multiplicative} and note
    that since $A \circ \project{T} = \project{T}$ we~get
    \begin{gather}
        A \circ L_{t} \circ \project{T}
        = A \circ (\id{\R^{\adim}} + t L) \circ \project{T}
        = \bigl( \id{\R^{\adim}} + t (A \circ L) \bigr) \circ \project{T} \,,
        \\
        A \bigl\lIm L_t \lIm T \rIm \bigr\rIm  = \im (A \circ L_{t} \circ \project{T}) \,,
        \\
        (A \circ \eqproject{L_t \lIm T \rIm} ) \circ (L_{t} \circ \project{T})
        = A \circ L_{t} \circ \project{T} \,,
        \\
        |\tbwedge_{\vdim} (A \circ \eqproject{L_t \lIm T \rIm})|
        \cdot |\tbwedge_{\vdim} (L_{t} \circ \project{T})|
        = |\tbwedge_{\vdim} (A \circ L_{t} \circ \project{T})| \,; 
    \end{gather}
    hence,
    \begin{multline}
        A^{\#}F(x, L_{t} \lIm T \rIm) \, |\tbwedge_{\vdim} (L_{t} \circ \project{T})|
        = F \bigl( Ax, \im A \circ L_{t} \circ \project{T} \bigr) \, |\tbwedge_{\vdim} (A \circ L_{t} \circ \project{T})|
        \\
        = F\bigl( Ax, \im \bigl( (\id{\R^{\adim}} + t (A \circ L)) \circ \project{T}\bigr) \bigr) \,
          |\tbwedge_{\vdim} (\id{\R^{\adim}} + t (A \circ L)) \circ \project{T}| \,.
    \end{multline}
    Combining this with~\ref{rem:BF_as_derivative} yields
    \begin{multline}
        B_{A^{\#}F}(x,T) \bullet L
        = \left. \tfrac{\ud}{\ud t} \right|_{t=0}
        A^{\#}F(x, L_t[T])\, |\tbwedge_{\vdim} L_{t} \circ \project{T}|
        \\
        = \left. \tfrac{\ud}{\ud t} \right|_{t=0}
        F\bigl( Ax, \im \bigl( (\id{\R^{\adim}} + t (A \circ L)) \circ \project{T} \bigr) \bigr) \,
        |\tbwedge_{\vdim} (\id{\R^{\adim}} + t (A \circ L)) \circ \project{T}|
        \\
        = B_F(Ax,T) \bullet (A \circ L)
        = A^* \circ B_F(Ax,T) \bullet L
        \,.
    \end{multline}

    We prove \ref{lem:perp:pull_back_by_auto_2}.  Since
    $ \im (A \circ \perpproject{T}) = \ker P_F(Ax,T)$, we deduce from
    \ref{lem:perp:pull_back_by_auto_1} that
    \begin{displaymath}
	B_{A^{\#}F}(x,T)^* \circ \perpproject{T}
	= B_{F}(Ax,T)^* \circ A \circ \perpproject{T} = 0 \,,
        \quad T^\perp \subseteq \ker P_{A^{\#}F}(x,T)\,.
    \end{displaymath}
    On the other hand, by \ref{rem:P_projection}, we see that $ T = \im P_{A^{\#}F}(x,T) $ and
    $ T \cap \ker P_{A^{\#}F}(x,T) = \{0\} $ and we conclude that
    \begin{displaymath}
        \ker P_{A^{\#}F}(x,T) = T^\perp
        \quad \text{and} \quad P_{A^{\#}F}(x,T) = \project{T} \,.
    \end{displaymath}
    The formulas for $B_{A^{\#}F}(T)$ and $P_{A^{\#}F}(T)$ given in~\ref{rem:BF_as_derivative}
    and~\ref{def:related_projections} yield immediately that
    \begin{displaymath}
	\langle \sigma^{*} X ,\, \uD A^{\#}F (x,T) \rangle = 0 \,,
    \end{displaymath}
    whenever $X \in \Tan(\grass{\adim}{\vdim},T)$. The postscript now follows by the Taylor
    formula~\cite[3.1.11 p.~220]{Federer1969} combined with compactness of the
    Grassmannian~$\grass{\adim}{\vdim}$.
\end{proof}

\section{Perpendicularity and locality of the mean curvature}
\label{sec:locality}

\begin{miniremark}
    \label{mr:def_varphi_T}
    Suppose $F$ is a $\vdim$-dimensional autonomous integrand of class~$\cnt{1}$ on $ \R^m $.
    Recall~\ref{rem:P_projection} to define for $ T \in \grass{m}{\vdim} $,
    \begin{displaymath}
        \varphi_T \in \End{\R^{\adim}}
        \quad \text{by} \quad
        \varphi_T = \project{T} + Q_F(T) = \project{T} + \id{\R^{\adim}} - P_{F}(T)\,.
    \end{displaymath}  
    We notice that $ \varphi_T $ is invertible with $ \varphi_T^{-1} = \perpproject{T} + P_F(T) $.
\end{miniremark}

\begin{theorem}
    \label{thm:mcv-orto}
    Suppose $F$ is a $\vdim$-dimensional autonomous integrand of class~$\cnt{1}$ on $ \R^m $,
    $ U \subseteq \R^{\adim} $ is open, $ V \in \IVar{\vdim}(U) $ such that $ \| \delta_{F} V \| $
    is a~Radon measure over $ U $ and
    \begin{gather}
        \Sigma = U \cap \bigl\{ y : 0< \density^{\vdim}(\| V \|,y) < \infty \bigr\}\,.
    \end{gather}
    For $V$ almost all $(b,T)$ and $0 < r < \dist(b,\R^{\adim} \without U)$, let $\varphi_T$ be as
    in~\ref{mr:def_varphi_T} and set
    \begin{displaymath}
        X(b,T,r) = T \cap \bigl\{
        y : \textsum{z \in \Sigma \cap \varphi_T[\cball br] \cap P_F(T)^{-1}\{ y \}}{}
        \density^{\vdim}(\|V\|,z) 
        = \density^{\vdim}(\|V\|,b)
        \bigr\} \,.
    \end{displaymath}
    Define $E$ to be the set of $(b,T) \in U \times \grass{m}{\vdim}$ for which
    \begin{gather}
        \density^{\vdim}(\|V\|,b) \in \natp \,,
        \quad
        T = \Tan^{\vdim}(\|V\|,b) \in \grass{\adim}{\vdim} \,,
        \\
        \label{eq:perp:height_decay}
        \lim_{r \to 0^{+}} r^{-\vdim-3}
        \textint{\cball br}{} | \perpproject{T}(y-b) |^2 \ud \|V\|(y) = 0 \,,
        \\
        \label{eq:perp:tilt_decay}
        \lim_{r \to 0^{+}} r^{-\vdim-1}
        \textint{\cball br \times \grass{\adim}{\vdim}}{} \| \project{S} - \project{T} \|^2 \ud V(y,S) = 0 \,,
        \\
        \label{eq:perp:bad_set_decay}
        \text{and} \quad
        \begin{multlined}
            \lim_{r \to 0^{+}} r^{-\vdim-1}  \HM^{\vdim} \bigl( T \cap \cball{P_F(T)b}{r} \without X(b,T,r) \bigr) =0
            \\
            \phantom{\hspace{7em}}
            \lim_{r \to 0^{+}} r^{-\vdim-1}  \|V\|\bigl( \varphi_T[\cball br] \without P_{F}(T)^{-1} \lIm X(b,T,r) \rIm \bigr)= 0
            \,.
        \end{multlined}
    \end{gather}
    Then
    \begin{displaymath}
        \project{T}(\mcv{F}(V,b)) = 0 
        \quad \text{for $V$ almost all $(b,T) \in E$} \,.
    \end{displaymath}
\end{theorem}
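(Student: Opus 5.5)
Throughout, fix $(b,T) \in E$ with $b \in B$, where $B$ is the set from Lemma~\ref{lem:pull_back:Lebesgue_points}; this costs only a $V$-null set. The idea is to straighten the anisotropy at $T$ by a linear change of coordinates. Then the first variation of the transformed varifold, tested with a tangential vector field, behaves to second order like the Euclidean area functional on a plane, and the tilt-decay, height-decay and bad-set-decay hypotheses make all error terms $o(r^{\vdim})$.

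\emph{Step 1 (choice of coordinates).} Set $A = \varphi_T$ from~\ref{mr:def_varphi_T}, $\psi = \varphi_T^{-1}$, $G = \varphi_T^{\#}F$ and $W = \psi_{\#} V$.
\begin{itemize}
\item Since $P_F(T)\circ\project{T} = \project{T}$, we have $Q_F(T)\circ \project{T} = 0$, hence $\varphi_T\circ\project{T} = \project{T}$.
\item Since $\im Q_F(T) = \ker P_F(T)$ and $Q_F(T) = Q_F(T)\circ\perpproject{T}$, we get $\im(\varphi_T\circ\perpproject{T}) = \ker P_F(T)$.
\end{itemize}
Thus Lemma~\ref{lem:perp:pull_back_by_auto}\ref{lem:perp:pull_back_by_auto_2} applies and gives three facts:
\begin{gather}
P_G(T)=\project{T}, \qquad \langle\sigma^*X,\uD G(T)\rangle = 0 \ \text{for } X \in \Tan(\grass{m}{\vdim},T), \\
|G(S)-G(T)| \le \Gamma\|\project{S}-\project{T}\|^2 .
\end{gather}
Let $a=\psi(b)$. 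Because $\psi|T = \id{T}$, we have $\Tan^{\vdim}(\|W\|,a) = T$.

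Lemma~\ref{lem:pull_back:Lebesgue_points} says that $a$ is a $\|W\|$ Lebesgue point of the mean $G$-curvature $g$ and of $\tau$, that $\mathbf{D}(\|\delta_G W\|_{\mathrm{sing}},\|W\|,a)=0$, and that $\density^{\vdim}(\|W\|,a) = \density^{\vdim}(\|V\|,b) =: \theta \in \natp$. Moreover $g(a) = \varphi_T^* h(b)$, since the Jacobian factor equals $1$ on $T$. Now $\project{T}\circ Q_F(T)^* = (Q_F(T)\circ\project{T})^* = 0$, so $\project{T} g(a) = \project{T} h(b)$. It therefore suffices to prove $v \bullet g(a) = 0$ for every $v \in T$.

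\emph{Step 2 (testing the first variation).} Take $\zeta_r(x) = \zeta(r^{-1}\project{T}(x-a))\,\chi(r^{-1}\perpproject{T}(x-a))$ with the usual cutoffs. Test $\delta_G W$ with $\zeta_r v$.
\begin{itemize}
\item The left side, divided by $\|W\|$ of the cylinder, converges to $-\theta^{-1}c\, g(a)\bullet v$ for some $c\neq0$. This uses the Lebesgue point property and the vanishing singular density.
\item By height decay~\eqref{eq:perp:height_decay}, transported through the linear map $\psi$, the region where $\nabla\chi\ne0$ carries $\|W\|$-mass $o(r^{\vdim+3})/r^2$. So the $\chi$-derivative terms are negligible and effectively $\uD(\zeta_r v) = r^{-1} v\otimes \nabla\zeta$ with $v,\nabla\zeta \in T$.
\end{itemize}
For $L$ of this form, expand $B_G(S)\bullet L$ using~\ref{rem:BF_as_derivative}:
\begin{itemize}
\item $\uD G(S)\circ\sigma^* = O(\|\project{S}-\project{T}\|)$, because it vanishes at $T$ on tangent directions.
\item $\perpproject{S}L\project{S} = O(\|\project{S}-\project{T}\|)$, because $L$ maps into $T$.
\item $\project{T}(\project{S}-\project{T})\project{T} = -(\perpproject{S}\project{T})^*(\perpproject{S}\project{T})$ is quadratic.
\item $G(S)-G(T)$ is quadratic by Step 1.
\end{itemize}
Hence $B_G(S)\bullet L = G(T)\, v\bullet\nabla\zeta + O(\|\project{S}-\project{T}\|^2 |L|)$. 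The error integrates to $r^{-1}\,o(r^{\vdim+1}) = o(r^{\vdim})$ by tilt decay~\eqref{eq:perp:tilt_decay}, again transferred through $\psi$ with constants depending only on $\|\varphi_T\|,\|\varphi_T^{-1}\|$.

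\emph{Step 3 (the main term; expected obstacle).} It remains to show $\int G(T)\, v\bullet\nabla\zeta(r^{-1}\project{T}(x-a))\ud\|W\|(x) = o(r^{\vdim+1})$. Since $\project{T}\circ\psi = \project{T}\circ P_F(T) = P_F(T)$, the $\project{T}$-fibres of $\spt\|W\|$ correspond exactly to the $P_F(T)$-fibres appearing in $X(b,T,r)$. Split the integral as follows.
\begin{itemize}
\item Over $P_F(T)^{-1}[X(b,T,r)]$, the area formula turns the integral into $\theta\int_{X}v\bullet\nabla\zeta\ud\HM^{\vdim}$, up to Jacobian factors $|\tbwedge_{\vdim}(\project{T}\circ\project{S})|^{-1} = 1+O(\|\project{S}-\project{T}\|^2)$ whose contribution is again controlled by tilt decay.
\item Replacing $X$ by the full disc $T\cap\cball{a}{Cr}$ gives $\theta\int_T v\bullet\nabla\zeta = 0$. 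The replacement error is at most $r^{-1}\HM^{\vdim}(T\cap\cball{}{Cr}\without X) = o(r^{\vdim})$, and the complementary mass is $r^{-1}\|V\|(\varphi_T[\cball{b}{Cr}]\without P_F(T)^{-1}[X]) = o(r^{\vdim})$, both by~\eqref{eq:perp:bad_set_decay}.
\end{itemize}
The delicate point, which I expect to be the main obstacle, is the bookkeeping between the cylinder around $a$ in $W$-coordinates and the sets $\varphi_T[\cball{b}{r}]$ and $X(b,T,r)$ in the hypotheses, which are phrased in $V$-coordinates. I would handle it using the ball inclusions~\eqref{eq:perp:ball_inclusions_under_diffeomorphism} for the linear map together with height decay, so that everything is compared at comparable radii $Cr$.

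Dividing the tested identity by $r^{\vdim}$ and letting $r\to0^+$ then yields $v\bullet g(a)=0$. Hence $\project{T}h(b)=0$.
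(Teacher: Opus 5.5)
Your proposal is correct in outline and shares the paper's architecture. Both push forward by $\psi=\varphi_T^{-1}$, use Lemma~\ref{lem:perp:pull_back_by_auto} to get $P_G(T)=\project{T}$ and $G$ quadratic at $T$, use the Lebesgue points of Lemma~\ref{lem:pull_back:Lebesgue_points}, test with a cutoff times a vector of $T$, and treat the main term by the coarea formula over $X$. What differs is the test function, and hence the decomposition of $\delta_G W$.

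The paper uses the radial cutoff $\rchi_{\lambda,r}(x)=\zeta_\lambda(|x|/r)$, whose gradient has a normal component. It therefore has to handle $(P_G(S)-P_G(T))u\bullet\grad\rchi_{\lambda,r}$ through the identity $(P_G(S)-P_G(T))u=-Q_G(S)\circ P_G(T)u$ and H\"older against the height excess (term $a_1$). It also has to compare $\grad\rchi_{\lambda,r}(x)$ with $\grad\rchi_{\lambda,r}(\project{T}x)$ via the Pythagorean identity (term $a_3$). In exchange, after $b=0$ the sets $\varphi_T[\cball 0r]$ in~\eqref{eq:perp:bad_set_decay} are exactly the preimages under $\psi$ of its balls $\cball 0r$. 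So $X(0,T,r)$ is used verbatim, and the main term vanishes by oddness of $\grad\rchi_{\lambda,r}$.

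Your cylindrical cutoff keeps the relevant gradient in $T$. Expanding $B_G(S)\bullet(v\otimes w)$ by~\ref{rem:BF_as_derivative} then makes every error quadratic in the tilt, and height decay enters only through a Chebyshev bound on the lateral part of the cylinder. This is more transparent, but it costs the ball-to-cylinder bookkeeping you flagged, and that bookkeeping does close:
\begin{itemize}
\item Take $C_r\subseteq\cball 0s$ with $s=\sqrt2\,r$. For $y\in X(0,T,s)$ with $|y|<r$, the fibre sum over $C_r$ can differ from $\theta$ only if the ball fibre contains a point of $\psi[\Sigma]$ with $|\perpproject{T}x|\ge r$.
\item The set $Z$ of such points has $\|W\|(Z)=o(r^{k+1})$ by height decay. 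Since densities are at least $1$, this gives $\HM^k(\project{T}[Z])=o(r^{k+1})$.
\item The $\|W\|$-mass of $C_r$ above $\project{T}[Z]\cap X(0,T,s)$ is then $o(r^{k+1})$, after splitting off $1-|\tbwedge_k\project{T}\circ\project{S}|$ with tilt decay.
\end{itemize}

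A few small repairs are needed.
\begin{itemize}
\item Write the Jacobian correction as $1-|\tbwedge_k\project{T}\circ\project{S}|\le 2^{2k+3}\|\project{S}-\project{T}\|^2$, as in the paper's $a_4$. Do not go through the reciprocal, which is unbounded where $\project{T}|S$ degenerates.
\item Keep the factor $\chi$ in the main term of Step~3; without it the integral runs over an unbounded slab.
\item The limit of the left-hand side needs $r^{-k}\int\zeta_r\ud\|W\|\to\theta\int_T\zeta\ud\HM^k$. This is not among the conclusions of Lemma~\ref{lem:pull_back:Lebesgue_points}, though it follows from your Step~3 machinery.
\item Your bound $\uD G(S)\circ\sigma^*=O(\|\project{S}-\project{T}\|)$ on $\Tan(\grass{m}{k},S)$ needs $\uD G$ Lipschitz, which a merely $\cnt{1}$ integrand does not give. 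The paper's proof tacitly uses the same regularity through its constants $\Delta_1$ and $\Delta_2$, and it holds in the application where $\phi$ is of class $\cnt{2}$.
\end{itemize}
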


\begin{proof}
    Let $ h $ be a mean $ F $-curvature of $ V $ and let $B$ be the set given by
    Lemma~\ref{lem:pull_back:Lebesgue_points}. Fix $(b,T) \in E \cap (B \times \grass{m}{\vdim})$
    and set $ \mu = \density^{\vdim}(\|V\|,b) \in \natp $. Without loss of generality, assume
    $b = 0$ and set $ \varphi = \varphi_T $ and $ \psi = \varphi^{-1} $. Recalling
    Lemma~\ref{lem:pull_back:Lebesgue_points} define
    \begin{displaymath}
        \Omega = \psi \lIm U \rIm \,,
        \quad
        W = \psi_{\#}V \,,
        \quad
        G = \varphi^{\#}F \,,
    \end{displaymath}
    and also
    \begin{multline}
        \tau(x) =  \project{\Tan^{\vdim}(\|W\|,x)}
        \quad \text{and} \quad
        g(x) = \bigl| \tbwedge_{\vdim} \varphi \circ \tau(x) \bigr|  \varphi^{*} h(x)
        \\
        \text{whenever $ \Tan^{\vdim}(\| W \|,x) \in \grass{\adim}{\vdim} $} \,.
    \end{multline}
    Notice that $0 = \psi(b) $, $G$ is a $\vdim$-dimensional autonomous integrand of class
    $ \cnt{1} $ in~$\Omega$, $W \in \IVar{\vdim}(\Omega)$ and $\| \delta_{G} W \|$ is a~Radon
    measure.  Moreover, Lemma~\ref{lem:pull_back:Lebesgue_points} yields
    \begin{equation}
        \label{eq:perp:Lebesgue_points}
        \begin{gathered}
            \text{$0$ is a~$\|W\|$~Lebesgue point of $g$ and $\tau$} \,, \\
            \density^{\vdim}(\|\delta_{G} W\|_{\mathrm{sing}},0) = 0 \,,
            \quad
           \density^{\vdim}(\| W\|,0)  = \mu \,.
        \end{gathered}
    \end{equation}
    Employing~\ref{lem:perp:pull_back_by_auto} with $ \varphi $ in place of $ A $ and
    recalling~\ref{rem:P_projection}, one sees that
    \begin{gather}
        G(T)  = F(T)\,,
        \quad
        \tau(0) = \project{\psi[T]} = \project{T}\,,
        \quad
 	B_G(T) = \varphi^\ast \circ B_F(T)\,,
        \\
        \label{eq:perp:projections}
        \project{T} \circ \psi = P_F(T)\,,
        \quad
        P_F(T) \circ \varphi = P_G(T) = \project{T}\,,
        \quad
        Q_G(T) = \perpproject{T} \,,
        \\
        \label{eq:perp:fibers of projections}
        \varphi\bigl(\project{T}^{-1}\{y\}\bigr) = P_F(T)^{-1}\{y\}
        \quad
        \text{for $ y \in \R^m $,}
    \end{gather}
    and that there exists $0 < \Delta_1 < \infty$ such that
    \begin{equation}
	\label{eq:perp:G_quadratic}
	|G(S) - G(T)| \le \Delta_1 \| \project{S} - \project{T} \|^2
	\quad \text{for $S \in \grass mk$}\,.
    \end{equation}

    Let $L = \Lip \bigl[ \pgrass{\adim}{\vdim} \ni R \mapsto \project{\psi \lIm \im R \rIm} \bigr]$
    and note that $L < \infty$
    by~\ref{mr:taking_image_of_an_endomorphism}. Recall
    $\psi \lIm T \rIm = T$, $ \Der \psi(y) = \psi $ for $ y \in \R^m $ and \ref{push-forward basic}, to get
    \begin{multline}
        \textint{\cball 0r \times \grass{\adim}{\vdim}}{} \| \project{S} - \project{T} \|^2 \ud W(x,S)
        = \textint{\cball 0r}{} \| \tau(x) - \project{T} \|^2 \ud \|W\|(x)
        \\
        = \textint{\cball 0r}{}
        \| \project{ \psi \lIm \Tan^{\vdim}(\|V\|,\varphi(x)) \rIm } - \project{\psi \lIm T \rIm} \|^2
        \ud \|W\|(x)
        \\
        \le L^2 \textint{\cball 0r}{}
        \| \project{ \Tan^{\vdim}(\|V\|,\varphi(x))  } - \project{T} \|^2
        \ud \| \psi_{\#}V\|(x)
        \\
        = L^2 \textint{\varphi \lIm \cball 0r \rIm \times \grass{m}{\vdim}}{}
        \| \project{S} - \project{T} \|^2 | \tbwedge_{\vdim} \psi \circ S | \ud V(y,S)
        \\
        \le L^2 \| \psi \|^{\vdim} \textint{\varphi \lIm \cball 0r \rIm \times \grass{m}{\vdim}}{}
        \| \project{S} - \project{T} \|^2 \ud V(y,S) \,.
    \end{multline}
    Since $(0,T) \in E$ conclude from~\eqref{eq:perp:tilt_decay} that
    \begin{equation}
        \label{eq:perp:tilt_decay_for_W}
        \lim_{r \to 0^{+}} r^{-\vdim-1}
        \textint{\cball 0r \times \grass{\adim}{\vdim}}{}
        \| \project{S} - \project{T} \|^2
        \ud W(x,S) = 0 \,.
    \end{equation}
    Analogously, noting that $\perpproject{T} \circ \psi = \perpproject{T}$ and
    using~\eqref{eq:perp:height_decay}, derive
    \begin{gather}
        \label{eq:perp:height_decay_for_W}
        \lim_{r \to 0^{+}} r^{-\vdim-3}
        \textint{\cball 0r}{} |\perpproject{T}x|^2 \ud \|W\|(x) = 0 
        \,.
    \end{gather}
   
    Fix $u \in T \cap \sphere{\adim-1}$ and $ 0 < \lambda < 1 $. Then choose a smooth
    function
    \begin{displaymath}
        \zeta_\lambda : \R \to \R \cap \{ t : 0 \le t \le 1 \}
    \end{displaymath}
    such that $ \zeta_\lambda (t) = 1 $ for $t \le \lambda $ and
    $ \spt \zeta_\lambda \subseteq \R \cap \{ t : t < 1 \} $. For $0 < r < r_0$ define
    \begin{gather}
        \rchi_{\lambda, r}(x) = \zeta_\lambda(|x|/r) \quad \text{for $x \in \Omega$} 
    \end{gather}
    and observe that
    \begin{gather}
        \label{eq:perp:null_int}
        \uD \rchi_{\lambda, r}(-x) = -\uD \rchi_{\lambda, r}(x) \quad \text{for $x \in \R^m$} \,,\quad
        \textint{T}{} \grad \rchi_{\lambda, r} \ud \HM^{\vdim}=0 \,,
        \\
        \label{eq:perp:gr_derivative}
        \text{and} \quad
        \delta_GW(\rchi_{\lambda, r}u)
        = \textint{}{} \bigl(P_G(S)u \bullet \nabla \rchi_{\lambda, r}(x)\bigr)\, G(S)
        \ud W(x,S) \,. 
    \end{gather}
    Given $0 < r < r_0$ set $ Y_r = X(0,T,r) $ and define
    \begin{align}
      a_0(r) &= \textint{\cball 0r \times \grass{\adim}{\vdim}}{} \,
               P_G(S) u \bullet \grad \rchi_{\lambda, r}(x) \bigl( G(S) - G(T) \bigr)
               \ud W(x,S)  \,,
      \\
      a_1(r) &= \textint{\cball 0r \times \grass{\adim}{\vdim}}{} \,
               \bigl( P_G(S) - P_G(T) \bigr) u \bullet \grad \rchi_{\lambda, r}(x)
               \ud W(x,S) \,,
      \\
      a_2(r) &= \textint{\cball 0r \without \project{T}^{-1} \lIm Y_r \rIm}{}
               \grad \rchi_{\lambda, r}(x)
               \ud \|W\|(x) \,,
      \\
      a_3(r) &= \textint{\cball 0r \cap \project{T}^{-1} \lIm Y_r \rIm}{}
               \bigl( \grad \rchi_{\lambda, r}(x) - \grad \rchi_{\lambda, r}(\project{T}x) \bigr)
               \ud \|W\|(x) \,,
      \\
      a_4(r) &= \textint{(\cball 0r \cap \project{T}^{-1} \lIm Y_r \rIm) \times \grass{\adim}{\vdim}}{}
               \grad \rchi_{\lambda, r}(\project{T}x)
               \bigl( 1 - | \tbwedge_{\vdim} \project{T} \circ \project{S} | \bigr)
               \ud W(x,S) \,,
      \\
      a_5(r) &= -\mu \textint{T \cap \cball 0r \without Y_r}{}
               \grad \rchi_{\lambda, r}(y)
               \ud \HM^{\vdim}(y) \,.
    \end{align}
    by~\eqref{eq:perp:gr_derivative} it follows that
    \begin{multline}
        \delta_{G} W(\rchi_{\lambda, r}\, u) 
        = a_0(r) + G(T) \bigl( a_1(r) + P_G(T) u \bullet
        (a_2(r) + a_3(r) + a_4(r)) \bigr) \\
        + G(T)\, P_G(T)u \bullet  \textint{\cball 0r \cap \project{T}^{-1} \lIm Y_r \rIm}{}
        \grad \rchi_{\lambda, r}(\project{T}x)
        | \tbwedge_{\vdim} \project{T} \circ \project{\Tan^{\vdim}(\| W \|, x)} |
        \ud \| W \|(x);
    \end{multline}
    next, deduce using~\ref{push-forward basic}, coarea formula~\cite[3.2.22]{Federer1969},
    \eqref{eq:perp:fibers of projections}, and \eqref{eq:perp:null_int} that
    \begin{multline}
	\textint{\cball 0r \cap \project{T}^{-1} \lIm Y_r \rIm}{}
	\grad \rchi_{\lambda, r}(\project{T}x)
        | \tbwedge_{\vdim} \project{T} \circ \project{\Tan^{\vdim}(\| W \|, x)} |  \ud \| W \|(x)
        \\
	= \textint{\cball 0r \cap \project{T}^{-1} \lIm Y_r \rIm \cap \psi \lIm \Sigma \rIm}{}
	\grad \rchi_{\lambda, r}(\project{T}x)
        | \tbwedge_{\vdim} \project{T} \circ \project{\Tan^{\vdim}(\| W \|, x)} | \,
        \density^{\vdim}(\| V \|, \varphi(x) )
        \ud\HM^{\vdim}(x)
        \\
	= \textint{Y_r \cap \project{T}\lIm \cball 0r \cap \psi \lIm \Sigma \rIm  \rIm}{}
	\grad \rchi_{\lambda, r}(y)
        \textint{P_F(T)^{-1}\{y\}\cap \varphi \lIm \cball 0r \rIm \cap \Sigma}{}
        \density^{\vdim}(\| V \|, u)
        \ud \HM^0(u) \ud \HM^{\vdim}(y)
        \\
        = \textint{Y_r \cap  \cball 0r}{}
	\grad \rchi_{\lambda, r}(y)
        \textint{P_F(T)^{-1}\{y\}\cap \varphi \lIm \cball 0r \rIm \cap \Sigma}{}
        \density^{\vdim}(\| V \|, u) \ud \HM^0(u) \ud \HM^{\vdim}(y)
        \\
	= \mu \textint{Y_r \cap  \cball 0r}{} 
	\grad \rchi_{\lambda, r}(y) \ud \HM^{\vdim}(y) = a_5(r)
    \end{multline}
    and conclude that
    \begin{equation}
	\label{eq:perp:first_variation_of_W}
	\delta_{G} W(\rchi_{\lambda, r}\, u) 
        = a_0(r) + G(T) \bigl( a_1(r) + P_G(T) u \bullet
        (a_2(r) + a_3(r) + a_4(r) + a_5(r)) \bigr)\,.
    \end{equation}

    Let us estimate each term separately. Noting
    \begin{gather}
        u = \project{T} u = P_G(T) u \,,
	\quad
	(P_G(S) - P_G(T)) u
	=  - Q_G(S) \circ P_G(T) u
        \\
        \text{and}\quad  Q_G(S) \circ Q_G(S) = Q_G(S) \quad \text{for $ S \in \grass{m}{\vdim} $} \,,
    \end{gather}
    compute
    \begin{multline}
        (P_G(S) - P_G(T)) u \bullet \grad \rchi_{\lambda, r}(x)
        = -  Q_G(S) \circ P_G(T) u \bullet Q_G(S)^* \grad \chi_{\lambda, r}(x)
        \\
        = -Q_G(S) \circ P_G(T) u \bullet \bigl(Q_G(S)^* - Q_G(T)^*\bigr) \grad \rchi_{\lambda, r}(x)
        \\
        - Q_G(S) \circ P_G(T) u \bullet Q_G(T)^* \grad \rchi_{\lambda, r}(x)
        \\
        = (P_G(S) - P_G(T)) u \bullet \bigl(Q_G(S)^* - Q_G(T)^*\bigr) \grad \rchi_{\lambda, r}(x)
        \\
        +  (P_G(S) - P_G(T)) u \bullet \perpproject{T} \grad \rchi_{\lambda, r}(x)
        \quad \text{for $ S \in \grass{m}{\vdim} $} \,.
    \end{multline}
    Hence, choosing $ \Delta_2 > 0 $ so that 
    \begin{displaymath}
        \| P_G(S_1)- P_G(S_2) \| \leq \Delta_2 \, \| \project{S_1} - \project{S_2} \|
        \quad \text{and} \quad
        \| Q_G(S_1)^*- Q_G(S_2)^* \| \leq \Delta_2 \, \|  \project{S_1} - \project{S_2} \| 
    \end{displaymath}
    for $ S_1, S_2 \in \grass{m}{\vdim} $, and noting that
    \begin{equation}
        \label{eq:perp:grad_chi_estimate}
        \begin{gathered}
            \grad \rchi_{\lambda, r}(x)
            = r^{-1} \zeta_\lambda'(|x|/r) \tfrac{x}{|x|}
            \quad \text{for $ x \in \R^m $}
            \\
            \text{and} \quad
            \grad \rchi_{\lambda, r}(x) = 0
            \quad \text{for $ x \notin \cball 0r \without \cball{0}{\lambda r} $}
        \end{gathered}
    \end{equation}
    use H\"older inequality to find that
    \begin{multline}
        |a_1(r)|
        \leq r^{-1}\,\Delta_2^2 \,\sup| \zeta_\lambda'|\,
        \textint{\cball 0r \times \grass{\adim}{\vdim}}{}
        \| \project{S} - \project{T} \|^2
        \ud W(x,S)
        \\
        +  r^{-1}\,\Delta_2\, \sup| \zeta_\lambda'|\,
        \textint{\cball 0r \without \cball{0}{\lambda r}}{}
        \| \project{S} - \project{T} \|\, \bigl| \perpproject{T}\bigl(\tfrac{x}{|x|}\bigr)\bigr|
        \ud W(x,S)
        \\
        \leq r^{-1}\,\Delta_2^2 \,\sup| \zeta_\lambda'|\,
        \textint{\cball 0r \times \grass{\adim}{\vdim}}{}
        \| \project{S} - \project{T} \|^2
        \ud W(x,S)
        \\
        + \Delta_2 \sup| \zeta_\lambda'|
        \Bigl( r^{-1} \textint{\cball 0r \times \grass{\adim}{\vdim}}{}
        \| \project{S} - \project{T} \|^2
        \ud W(x,S)\Bigr)^{1/2}
        \Bigl( \lambda^{-2}r^{-3}\textint{\cball 0r}{}
        | \perpproject{T}(x)|^2
        \ud \| W \|(x)\Bigr)^{1/2} \,. 
    \end{multline}
    Combining~\eqref{eq:perp:G_quadratic} and \eqref{eq:perp:grad_chi_estimate} yields also
    \begin{displaymath}
  	\bigl| P_G(S) u \bullet \grad \rchi_{\lambda, r}(x) (G(S) - G(T)) \bigr|
  	\le r^{-1} \Lip \zeta_\lambda \sup \bigl( \im \|P_G \| \bigr)
        \Delta_1 \| \project{S} - \project{T} \|^2 \,,
    \end{displaymath}
    so summoning \eqref{eq:perp:tilt_decay_for_W} and \eqref{eq:perp:height_decay_for_W} gives
    \begin{equation} \label{eq:perp:A0A1_estimate}
  	\lim_{r\to 0} r^{-{\vdim}} (a_0(r) + a_1(r)) = 0 \,.
    \end{equation}
    
    Since $ \project{T} \circ \psi = P_F(T) $ one gets
    \begin{displaymath}
        a_2(r) = \textint{\varphi[\cball 0r] \without P_F(T)^{-1} \lIm Y_r \rIm}{}
        \grad \rchi_{\lambda, r}(\psi(y))\, | \tbwedge_{\vdim}\psi \circ \project{S}|
        \ud V(y,S) \,;
    \end{displaymath}
    hence, it follows by~\eqref{eq:perp:bad_set_decay} and~\eqref{eq:perp:grad_chi_estimate} that
    \begin{equation}
        \label{eq:perp:A2A5_estimate}
        \lim_{r \to 0^+} r^{-\vdim} \bigl( |a_2(r)| + |a_5(r)| \bigr) = 0 \,.
    \end{equation}
    From~\cite[11.4]{Fang2018}) derive 
    $ 0 \le 1 - | \tbwedge_{\vdim} \project{T}\circ \project{S} | \le 2^{2k+3} \| \project{S} -
    \project{T} \|^2 $ for $S \in \grass{\adim}{\vdim}$; thus,
    \begin{gather}
        |a_4(r)| \le r^{-1} 2^{2k+3} \sup |\zeta_\lambda'|\,
        \textint{\cball 0r \times \grass{m}{\vdim}}{}
        \| \project{S} - \project{T} \|^2
        \ud W(x,S)
        \\
        \label{eq:perp:A4estimate}
        \text{and} \quad \lim_{r\to 0} r^{-{\vdim}} |a_4(r)| =0\,.
    \end{gather}
     
    To deal with $a_3$, choose $x \in \cball 0r$ such that $\project{T}x \neq 0$, set
    $ y = \project{T}x $, and notice that
    \begin{gather}
        u \bullet x = u \bullet y \,,
        \quad
        |\perpproject{T} x|^2
        = |x - y|^2
        = |x|^2 - |y|^2
        = \bigl( |x| - |y| \bigr) \bigl( |x| + |y| \bigr) \,;
    \end{gather}
    hence, recalling $\zeta_\lambda'(t) = 0$ for $t \le \lambda $, one estimates
    \begin{multline}
        \bigl|
        u \bullet \bigl( \grad \rchi_{\lambda, r}( x ) - \grad \rchi_{\lambda, r}(y) \bigr)
        \bigr|
        = r^{-1} \bigl| \zeta_\lambda'(|x|/r) \tfrac{u \bullet  x}{|x|}
        - \zeta_\lambda'(|y|/r) \tfrac{u \bullet  y}{|y|}
        \bigr| 
        \\
        \leq   r^{-1}  |\zeta_\lambda'(|x|/r)|  
        \bigl| (u \bullet y) \bigl(\tfrac{1}{|y|} - \tfrac{1}{|x|}\bigr) \bigr| +
        r^{-1} \bigl| \zeta_\lambda'(|x|/r) - \zeta_\lambda'(|y|/r) \bigr|  
        \\
        \leq r^{-1} (\sup |\zeta_\lambda'|) \CF{\spt \zeta_\lambda'}(|x|/r)
        \tfrac{1}{|x|} \bigl| |y| - |x| \bigr| + r^{-2} \Lip \zeta_\lambda' \bigl| |x| - |y| \bigr|
        \\
        \leq \bigl( \lambda^{-1} \,\sup | \zeta_\lambda'| + \Lip | \zeta_\lambda |\bigr)\, r^{-2}\, | |x| - | y|| \\
        \le  r^{-2} \bigl(\Lip \zeta_\lambda' +  2\sup| \zeta_\lambda'|\bigr) \tfrac{|\perpproject{T} x|^2}{|x| + |y|}\,. 
    \end{multline}
    Since $|x| + |\perpproject{T}(x)| \le r/2$ implies
    $\grad \rchi_{\lambda, r}( x ) = \grad \rchi_{\lambda, r}( \perpproject{T}(x) ) = 0$ this leads
    to
    \begin{displaymath}
        | a_3(r) | \leq 2r^{-3}\, \bigl(\Lip \zeta_\lambda' +  2\sup| \zeta_\lambda'|\bigr)\,
        \textint{\cball 0r}{}|\perpproject{T} x|^2 \ud \| W \|(x)
    \end{displaymath}
    and, using~\eqref{eq:perp:height_decay_for_W}, to 
    \begin{equation}
        \label{eq:perp:A3_estimate}
        \lim_{r\to 0}r^{-{\vdim}}|a_3(r)| = 0 \,.
    \end{equation}

    In conclusion, 
    \begin{displaymath}
        \lim_{r \to 0^+} r^{-\vdim} \delta_{G} W(\rchi_{\lambda, r}\, u) = 0
        \quad \text{for every $ 0 < \lambda < 1 $ and $ u \in T \cap \sphere{m-1} $} \,.
    \end{displaymath}
    On the other hand, employing~\eqref{eq:perp:Lebesgue_points},
    \ref{mr:Lebesgue_points_and_taking_image}, \ref{rem:integral_representation_of_first_variation},
    and~\ref{def:wmcv}, one obtains
    \begin{multline}
        \lim_{r \to 0^+} r^{-\vdim}
        \bigl|
        \delta_{G} W(\rchi_{\lambda, r}\, u) - \mu \unitmeasure{\vdim} u \bullet g(0)
        \bigr|
        \le \lim_{r \to 0^+} r^{-\vdim}
        \textint{\cball{0}{r} \without \cball{0}{\lambda r}}{}
        |g| \ud \|W\|
        \\
        = \mu (1 - \lambda^{\vdim}) \unitmeasure{\vdim} |g(0)|
        \quad \text{for each $ 0 < \lambda < 1 $ and $ u \in T \cap \sphere{m-1} $} \,.
    \end{multline}
    It follows that $ g(0) \bullet u $ for every $ u \in T \cap \sphere{m} $. Finally, noting that
    $ \varphi(u) = u $, one concludes
    \begin{displaymath}
        \bigl| \tbwedge_{\vdim} \varphi \circ \project{T} \bigr| \,  h(0) \bullet u
        = \bigl| \tbwedge_{\vdim} \varphi \circ \project{T} \bigr|\,  \varphi^{*} h(0) \bullet u
        = g(0)  \bullet u  = 0
        \quad \text{for every $ u \in T \cap \sphere{m} $} \,.
        \qedhere
    \end{displaymath}
\end{proof}

\begin{remark}
    \label{rem:perp:comparison_to_Brakke}
    Brakke's proof in \cite[\S{5.8}]{Brakke1978} of the perpendicularity theorem in the Euclidean
    setting employs the Euclidean structure in an essential way. Indeed, the Pythagorean theorem
    plays a crucial role in the estimate for~$a_3$ and the estimate for~$a_0$ wouldn't go through if
    we did not ensure that~\ref{thm:mcv-orto}\eqref{eq:perp:G_quadratic} holds. Also the estimate
    for $a_1$ would not be possible if $P_G(T) \ne \project{T}$. Fortunately, perpendicularity of
    the mean curvature vector at a~point~$b$ depends only on the behaviour of the varifold in
    vicinity of~$b$ so straightening the metric at that~single point~$b$, by pushing both the
    varifold and the integrand with the linear diffeomorphism~$\psi$, solves all the problems coming
    from anisotropy.
\end{remark}

\begin{remark}
    \label{rem:perp:bad_set_estimates}
    In the isotropic case assumption~\ref{thm:mcv-orto}\eqref{eq:perp:bad_set_decay} is generic by
    means of multivalued Lipschitz approximation; cf.~\cite[\S{5.2}]{Brakke1978}
    or~\cite[Lemma~5.7]{Menne2011b}. In such approximation the part of the varifold not covered by
    the graph of a multivalued Lipschitz function (the \emph{bad set}) is controlled by the
    tilt-excess and the total variation raised to the power~$\vdim/(\vdim-1)$, which, together
    with~\ref{thm:mcv-orto}\eqref{eq:perp:height_decay} combined with the Caccioppoli-type
    inequality, yields decay faster than~$r^{\vdim+1}$. In the anisotropic case, the only available
    Lipschitz approximation theorem~\cite[\S{2.6}]{Allard1986} (which can be generalised to multiple
    values preserving the estimates) controls the bad set in a ball of radius~$r$ with the
    tilt-excess and $r$~times the total variation -- at a~generic point this is of order
    precisely~$r^{\vdim+1}$ with non-zero limit as $r \to 0^+$; hence, does not provide the decay
    required in~\ref{thm:mcv-orto}\eqref{eq:perp:bad_set_decay}.

    In the setting of~\ref{main:locality}, however, we first apply the regularity theorem
    of~\cite{KolSan2} to deduce that the unit density layer~$Q$ of~$V$ is almost everywhere locally
    associated to a~graph of a~$\cnt{1,\alpha}$ function and then
    \begin{displaymath}
         T \cap \cball{P_F(a,T)a}{r} \without X(b,T,r) = \varnothing
    \end{displaymath}
    for $V$ almost all $(b,T)$ with $b\in Q$ and $r > 0$ small enough; thus,
    condition~\ref{thm:mcv-orto}\eqref{eq:perp:bad_set_decay} is satisfied. Reliance
    on~\cite{KolSan2}, and indirectly on~\cite{Allard1986}, causes our theorem to be applicable only
    on~$Q$ rather than on the whole support of~$V$.
\end{remark}

\begin{remark}
    \label{rem:perpendicularity_in_the_smooth_case}
    If a constant density varifold $V$ inside a ball $\oball br$ is associated to the graph~$M$ of
    a~$\cnt{2}$ function, then for each $v \in \Tan(M,b)$ there exists a~tangent
    $\cnt{1}$~vectorfield $g \in \VF(M)$ supported in~$\oball br$ such that $g(b) = v$ and the flow
    of~$g$ does not change~$V$ so \mbox{$\delta_F V(g) = 0$}; hence, perpendicularity
    of~$\mcv{F}(V,b)$ follows. For~$V$ of lower regularity this approach is not applicable.
\end{remark}

\renewcommand{\adim}{{n+1}}
\renewcommand{\vdim}{n}
\renewcommand{\codim}{1}

\begin{theorem}
    \label{thm:locality}
    Suppose $\Omega \subseteq \R^{\adim}$ is open, $F : \grass{\adim}{\vdim} \to \R$ is associated
    to the norm~$\phi$ as in~\ref{mr:anisotropic_mean_curvature_vector},
    $ V \in \IVar{\vdim}(\Omega) $ such that $ \| \delta_F V \| $ is a Radon measure over $ \Omega $
    and $ M \subseteq \Omega $ is a $\vdim$-dimensional submanifold of class~$\cnt{2}$ such that
    $ \HM^{\vdim}(M) < \infty $.

    Let $A$ be the set of $a \in  M$ such that
    \begin{gather}
        \label{eq:locality:perpendicularity}
        \mcv{F}(V,a) \perp \Tan^{\vdim}(\|V\|,a) \in \grass{\adim}{\vdim} \,
        \\ \text{and} \quad
        \label{eq:locality:tilt-decay}
        \limsup_{r \to 0^{+}} r^{-\vdim-2}
        \textint{\cball ar \times \grass{\adim}{\vdim}}{}
        \|\project{S} - \project{\Tan^{\vdim}(\|V\|,a)}\|^2
        \ud V(x, S) < \infty
        \,.
    \end{gather}
    Then
    \begin{displaymath}
        \mcv{F}(V,a) = \mcv{F}(M,a)
        \quad \text{for $\|V\|$ almost all $a \in A$} \,.
    \end{displaymath}
\end{theorem}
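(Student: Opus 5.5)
The plan is to test the first variation of $V$ and of $\var{\vdim}(M)$ against the same localized normal-type vector field, and to show that the two first variations, divided by $r^{\vdim}$, have the same limit. Since $\mcv{F}(V,a)$ is normal at points of $A$ by \eqref{eq:locality:perpendicularity}, and $\mcv{F}(M,a)$ is normal by \cite[Proposition 2.1]{DePhilippis2019}, it suffices to compare a single component. This follows the strategy of \cite{Ambrosio2003}.

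\emph{Reduction to good points.} Let $\nu$ be a $\cnt{1}$ unit normal field of $M$ near the point under consideration. Define
\begin{displaymath}
    w(x) = Q_F(\Tan(M,x))\,\nu(x) \quad \text{for } x \in M,
\end{displaymath}
and extend it to a $\cnt{1}$ field on a neighbourhood; this is possible because $M$ is $\cnt{2}$ and $F$ is $\cnt{2}$. By \ref{rem:P_projection} we have $P_F(\Tan(M,x))\,w(x)=0$ on $M$. The inequality \ref{lem:PQ_estimates}\eqref{eq:sac} gives $w(a)\bullet\nu(a) > 0$, so $w(a)$ detects the normal component.

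Discarding a $\|V\|$-null set, I restrict to points $a \in A$ with the following properties:
\begin{itemize}
\item $\Tan^{\vdim}(\|V\|,a)=\Tan(M,a)=:T$, by locality of approximate tangents on rectifiable sets;
\item $\density^{\vdim}(\|V\|,a)=\theta \in \natp$, and $\density^{\vdim}(\|V\|\restrict(\Omega\without M),a)=0$;
\item $a$ is a Lebesgue point of $\mcv{F}(V,\cdot)$ and of $\density^{\vdim}(\|V\|,\cdot)|_M$;
\item $\mathbf{D}(\|\delta_FV\|_{\mathrm{sing}},\|V\|,a)=0$.
\end{itemize}

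\emph{Main computation.} With $\rchi_r(x)=\zeta(|x-a|/r)$ as in the proof of \ref{thm:mcv-orto}, I will show
\begin{displaymath}
    \lim_{r\to0^+} r^{-\vdim}\bigl(\delta_FV(\rchi_r w) - \theta\,\delta_F\var{\vdim}(M)(\rchi_r w)\bigr)=0 .
\end{displaymath}
The Lebesgue point properties turn the left-hand side into $-\theta\unitmeasure{\vdim}F(T)\bigl(\mcv{F}(V,a)-\mcv{F}(M,a)\bigr)\bullet w(a)$, up to the choice of cutoff. Writing $\delta_FV(g)=\int B_F(S)\bullet \uD g\ud V$, I split into two terms.

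\begin{itemize}
\item The zero-order term $\rchi_r\, B_F(S)\bullet\uD w$. This term is continuous in $(x,S)$. Using $\int\|\project S-\project T\|\ud V \le C r^{\vdim+1}$ from \eqref{eq:locality:tilt-decay} and Hölder, its integral against $V$ minus $\theta$ times its integral against $\var{\vdim}(M)$ is $o(r^{\vdim})$, by the density facts above.
\item The gradient term $F(S)\,P_F(S)w(x)\bullet\grad\rchi_r(x)$. This is the delicate part. On $M$ with $S=\Tan(M,x)$ it vanishes identically. For $V$ one expands
\begin{displaymath}
    P_F(S)w(x) = \bigl(P_F(S)-P_F(\Tan(M,\xi(x)))\bigr)w + O(|x-\xi(x)|),
\end{displaymath}
where $\xi$ is the nearest-point projection onto $M$.
\end{itemize}

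\emph{Main obstacle.} The difficulty is that $|\grad\rchi_r|\sim r^{-1}$ and the tilt bound only gives $O(r^{\vdim+1})$, so this term is a priori merely $O(r^{\vdim})$, not $o(r^{\vdim})$. I would handle it by a blow-up argument along a sequence $r_j\to0$. Rescale the varifolds $V_j=(\scale{1/r_j}\circ\trans{-a})_\#V$ and consider the first-order quantities $(\project S-\project T)/r_j$ together with the height $\perpproject T(x-a)/r_j^{2}$. By \eqref{eq:locality:tilt-decay}, \ref{cor:tilt-decay} and \ref{lem:tilt-height}, these are bounded in $L^2$.

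The aim is to show that any weak limit coincides with $\theta$ times the second-order data of $M$ at $a$. For this I would use perpendicularity \eqref{eq:locality:perpendicularity} together with the first variation identity tested against tangential fields; the idea is that the limiting height is a weak solution of a constant-coefficient elliptic equation, namely the linearized $F$-equation. This uses \ref{rem:P_projection} so that only the linear part $P_F(S)\circ\perpproject T$ matters. The contribution of $\Omega\without M$ in the gradient term must also be shown to vanish in the limit. There I would try the same linearization trick, applied after straightening by $\varphi_T$ from \ref{mr:def_varphi_T}, which makes $P_G(T)=\project T$ as in the proof of \ref{thm:mcv-orto}.

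I expect this identification of the linear tilt term in the limit to be the main difficulty. Once it is done, the displayed limit gives $(\mcv{F}(V,a)-\mcv{F}(M,a))\bullet w(a)=0$. Combining this with normality of both vectors and $w(a)\bullet\nu(a)>0$ yields the claim.
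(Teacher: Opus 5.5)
Your setup is sound: the field $w$, the list of good points, the split of $\delta_FV(\rchi_r w)$ into a zero-order term and a gradient term, and the treatment of the zero-order term. But the step you single out as the main obstacle is never proved, so the argument has a genuine gap at its central point. The blow-up programme you offer instead is not a proof. The $L^2$ bound on $\perpproject{T}(x-a)/r_j^{2}$ does not follow from the results you cite: Lemma~\ref{lem:tilt-height} and Corollary~\ref{cor:tilt-decay} bound tilt by height, not height by tilt, and the theorem assumes no height decay. The identification of the weak limits with the second-order data of $M$ through a linearized equation is only announced.

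No blow-up is needed, because you already have every ingredient to show the gradient term is $o(r^{n})$ by localization. Since $V$ is rectifiable, $S=\Tan^{n}(\|V\|,x)$ for $V$-almost all $(x,S)$. Moreover $\Tan^{n}(\|V\|,x)=\Tan(M,x)$ for $\|V\|$-almost all $x\in M$. Hence $P_F(S)w(x)=0$ for $V$-almost all $(x,S)\in M\times\grass{n+1}{n}$, so for $V$ as well only $(\cball{a}{r}\without M)\times\grass{n+1}{n}$ contributes. On that set, $|P_F(S)w(x)|\le C(\|\project{S}-\project{T}\|+|x-a|)$ by Lemma~\ref{lem:PQ_estimates} and $P_F(T)w(a)=0$, and $|\grad\rchi_r|\le C/r$. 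The gradient term is therefore bounded by
\[
C r^{-1}\bigl(\|V\|(\cball{a}{r}\without M)\bigr)^{1/2}\Bigl(\textint{\cball{a}{r}\times\grass{n+1}{n}}{}\|\project{S}-\project{T}\|^{2}\ud V(x,S)\Bigr)^{1/2}+C\,\|V\|(\cball{a}{r}\without M)=o(r^{n}).
\]
This holds because $\density^{n}(\|V\|\restrict(\Omega\without M),a)=0$, which is already on your list, and the integral is $O(r^{n+2})$ by \eqref{eq:locality:tilt-decay}. The exponent $n+2$ in the hypothesis is exactly what makes this product $o(r^{n})$.

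This is essentially the paper's proof, which is simpler still. It tests with the constant field $\zeta(|z-a|/r)\grad\phi(\nu)$. Since $\grad\phi(\nu)$ spans $\ker P_F(T)$, \ref{mr:anisotropic_mean_curvature_vector} gives $B_F(T)\bullet\uD g_r\equiv0$, so no zero-order term appears. It then uses that $V$ and $d\,\var{n}(M)$ coincide on $A_d\times\grass{n+1}{n}$, and estimates the remainder on $\cball{a}{r}\without A_d$ by Cauchy--Schwarz exactly as above.

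A minor point: \eqref{eq:sac} with $S=T$ gives only $P_F(T)^*\bullet Q_F(T)\ge 0$, so it does not yield $w(a)\bullet\nu(a)>0$. Compute instead $Q_F(T)\nu=\grad\phi(\nu)/\phi(\nu)$ from \ref{mr:anisotropic_mean_curvature_vector}, which gives $w(a)\bullet\nu(a)=1$.
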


\begin{proof}
    We set $ \tau(a) = \project{\Tan^n(\| V \|, a)} $ whenever
    $\Tan^{\vdim}(\|V\|,a) \in \grass{\adim}{\vdim} $.  In particular, by
    \ref{rem:integral_representation_of_first_variation}, \cite[3.5(1)]{Allard1972} and \cite[2.9.9,
    2.9.10]{Federer1969} we see that for $ \| V \| $ a.e.\ $ a \in \Omega $ there exists
    $ m \in \natp $ such that
    \begin{gather}
        \density^{\vdim}(\|V\|,a) = m, \quad \density^{\vdim}(\|\delta_F V\|_{\mathrm{sing}},a)=0 \,,
        \\
        \VarTan(V,a) = \bigl\{ m\, \var{n}(\Tan^n(\| V \|,a)) \bigr\} 
    \end{gather}
    and $ a $ is a $\|V\|$~Lebesgue point of $(F \circ \tau)\,\mcv{F}(V,\cdot) $. Moreover, by
    \cite[2.10.19(4), 3.2.16]{Federer1969} we have that
    \begin{displaymath}
        \density^{\vdim}(\|V\| \restrict \R^{\adim} \without M,a)  = 0
        \quad \textrm{and} \quad \Tan^n(\| V \|,a) = \Tan(M,a)
    \end{displaymath}
    for $ \| V \| $ a.e.\ $ a \in M $. For $d \in \natp$ we define
    \begin{displaymath}
        A_d = A \cap \left\{
            a :
            \begin{gathered}
                \mcv{F}(V,a) \perp \Tan(M,a) = \Tan^{\vdim}(\|V\|,a) ,\,
                \\
                \text{$a$ is a $(\|V\|,C)$~Lebesgue point of $(F \circ \tau)\,\mcv{F}(V,\cdot)$} ,\,
                \\
                \density^{\vdim}(\|V\|,a) = d = \density^{\vdim}(\|V\| \restrict M,a) \,,
                \density^{\vdim}(\|\delta_F V\|_{\mathrm{sing}},a) = 0,\\
                \VarTan(V,a) = \{d\, \var{n}(\Tan^n(\| V \|,a))\}
            \end{gathered}
        \right\} \,
    \end{displaymath}
    and, recalling hypothesis \eqref{eq:locality:perpendicularity}, we see that
    \begin{displaymath}
        \| V \| \bigl(A \without {\textstyle \bigcup_{d \in \natp}}A_d \bigr) =0 \,.
    \end{displaymath}

    We choose $ d \in \natp $ and $ a \in A_d $ so that
    \begin{displaymath}
        \density^n(\HM^{\vdim} \restrict (M \without A_d), a) = \density^n(\| V \| \restrict \R^{\adim}
        \without A_d, a) =0
    \end{displaymath}
    and we choose
    \begin{displaymath}
        T = \Tan(M,a)\,, \quad
        \nu \in T^{\perp} \cap \sphere{\vdim}  \,,
        \quad
        \eta = \grad \phi(\nu) 
    \end{displaymath}
    and $\zeta : \R \to \R$ a smooth such that $\zeta(t) = 0$ for $1 \le t < \infty$. Define
    \begin{displaymath}
        g_{r}(z) = \zeta(|z-a|/r) \eta
        \quad \text{for $0 < r < \infty$} \,.
    \end{displaymath}
   Then we observe that
    \begin{multline}
        \label{eq:locality_mcv_expressions}
        - d \phi(\nu) \bigl( \mcv{F}(V,a) \bullet \eta \bigr)
        \textint{T \cap \cball 01}{} \zeta(|z|) \ud \HM^{\vdim}(z)
        = \lim_{r \to 0^{+}} r^{-\vdim} \delta_F V(g_r)
        \\
        = \lim_{r \to 0^{+}} r^{-\vdim} \textint{}{} B_{F}(S) \bullet \uD g_r(z) \ud V(z,S)
        \,.
    \end{multline}
    Set $W = d\,\var{\vdim}(M)$ and observe that since $M$ is of class~$\cnt{2}$ properties
    \eqref{eq:locality:tilt-decay} and~\eqref{eq:locality_mcv_expressions} hold also for~$W$
    in~place of~$V$. Recalling that $\mcv{F}(V,a), \mcv{F}(W,a) \in \im T^{\perp}$,
    $\dim \im T^{\perp} = 1$, and $\eta \bullet \nu = \phi(\nu) \ne 0$, it~suffices to prove that
    \begin{displaymath}
        \lim_{r \to 0^{+}}
        r^{-\vdim} \textint{}{} B_{F}(S) \bullet \uD g_r(z) \ud V(z,S)
        - r^{-\vdim} \textint{}{} B_{F}(S) \bullet \uD g_r(z) \ud W(z,S) = 0 \,.
    \end{displaymath}
    From~\ref{mr:anisotropic_mean_curvature_vector} we see that $\im \uD g_r(z) \perp \im B_{F}(T)$ and
    $B_{F}(T) \bullet \uD g_r(z) = 0$ for $z \in \Omega$. We observe that
    \begin{multline}
        \textint{}{} B_{F}(S) \bullet \uD g_r(z) \ud V(z,S)
        - \textint{}{} B_{F}(S) \bullet \uD g_r(z) \ud W(z,S)
        \\
        = \textint{(\cball ar \without A_d) \times \grass{\adim}{\vdim}}{}
        (B_{F}(S) - B_{F}(T)) \bullet \uD g_r(z)
        \ud V(z,S)
        \\
        - \textint{(\cball ar \without A_d) \times \grass{\adim}{\vdim}}{}
        (B_{F}(S) - B_{F}(T)) \bullet \uD g_r(z)
        \ud W(z,S)
        \,.
    \end{multline}
    Define
    \begin{multline}
        R_{U}(r) = \textint{(\cball ar \without A_d) \times \grass{\adim}{\vdim}}{}
        (B_{F}(S) - B_{F}(z,T)) \bullet \uD g_r(z)
        \ud U(z,S)
        \\
        \text{for $0 < r < \infty$ and $U \in \{ V ,\, W \}$} \,.
    \end{multline}
    We shall show that $\lim_{r \to 0^{+}} r^{-\vdim} R_U(r) = 0$ for $U \in \{ V ,\, W \}$. Since
    $\phi$ is of class~$\cnt{2}$ we see that $B_{F}$ is Lipschitzian. Set
    $L = \Lip B_{F} \Lip \zeta$ and note that $|\eta| \le c_{1}(\phi)$. For brevity define
    $E(r) = (\cball ar \without A_d) \times \grass{\adim}{\vdim}$ for $0 < r < \infty$. We~estimate
    \begin{multline}
        (L c_1(\phi))^{-1} r^{-\vdim} R_{U}(r)
        \le r^{-\vdim-1} \textint{E(r)}{}
        \| \project{S} - \project{T} \| \ud U(z,S)
        \\
        \le \bigl( r^{-\vdim} \|U\| ( \cball ar \without A_d ) \bigr)^{1/2}
        \Bigl( r^{-\vdim-2} \textint{E(r)}{}
        \| \project{S} - \project{T} \|^2 \ud U(z,S)
        \Bigr)^{1/2} \,.
    \end{multline}
    The first term clearly converges to~$0$ and the second stays bounded as $r \to 0^{+}$; hence,
    the theorem is proven.
\end{proof}

\section{Proof of the main theorem}
\label{sec:regularity}

In this section we combine the main result from our previous paper~\cite{KolSan2}
with~\ref{thm:locality} to prove~\ref{main:locality}.

\begin{proof}
    Define $ A = \spt \| V \| $ and
    \begin{displaymath}
        R = A \cap \bigl\{
        x 
        : \exists \, r > 0 \ 
        \exists \, \nu \in \sphere{\vdim} \quad
        \oball{a+r\nu}{r} \cap A = \varnothing = \oball{a-r\nu}{r} \cap A
        \bigr\} \,.
    \end{displaymath}
    From~\cite[Theorem~1.1]{KolSan2} it follows that $\HM^{\vdim}(\spt \|V\| \without R) = 0$ and
    applying~\cite[Theorem~1.2]{KolSan2} one gets also that for $\HM^{\vdim}$ almost all $a \in Q$
    there exists $0 < r_a < \infty$ such that
    \begin{equation}
        \label{eq:regularity}
        \oball{a}{r_a} \cap \spt \|V\| = \oball{a}{r_a} \cap Q
        \quad \text{is a $\cnt{1,\alpha}$~hypersurface} \,.
    \end{equation}
    Clearly for each $a \in R$ there holds
    \begin{equation}
        \label{eq:mreg:height_decay}
        \lim_{r \to 0^{+}} r^{-\vdim-4} \textint{\cball ar}{} | \perpproject{T} x |^2 \ud \|V\|(x) < \infty
    \end{equation}
    so employing~\ref{cor:tilt-decay} one gets also
    \begin{equation}
        \label{eq:mreg:tilt_decay}
        \limsup_{r \to 0^{+}} r^{-\vdim-2} \textint{\cball ar \times \pgrass{\adim}{\vdim}}{}
        \| \project{S} - \project{T} \|^2 \ud V(z,S) < \infty \,.
    \end{equation}
    Using~\eqref{eq:regularity} one sees that for $\HM^{\vdim}$ almost all $a \in Q \cap R$ there is
    $0 < r < r_a$ so small that condition~\ref{thm:mcv-orto}\eqref{eq:perp:bad_set_decay} is
    satisfied (cf.~\ref{rem:perp:bad_set_estimates}); hence, since~\eqref{eq:mreg:height_decay}
    and~\eqref{eq:mreg:tilt_decay} ensure validity of the remaining hypotheses, \ref{thm:mcv-orto}
    yields that
    \begin{equation}
        \label{eq:mreg:perpendicularity}
        \mcv{F}(V,a) \perp \Tan^{\vdim}(\|V\|,a)
        \quad \text{for $\|V\|$~almost all $a \in Q \cap R$}\,.
    \end{equation}
    Let $M \subseteq \Omega$ be a properly embedded $\vdim$-dimensional submanifold of
    class~$\cnt{2}$. Combining~\eqref{eq:mreg:perpendicularity} with~\eqref{eq:mreg:tilt_decay} and
    applying~\ref{thm:locality} we see that for $\|V\|$~almost all $a \in Q \cap M \cap R$ there
    holds $\mcv{F}(V,a) = \mcv{F}(M,a)$.
\end{proof}

\subsection*{Acknowledgements}
The~research of Sławomir Kolasiński was financed by the \href{https://ncn.gov.pl/}{National Science
  Centre Poland} grant number 2022/46/E/ST1/00328.
\\
The research of Mario Santilli is partially supported by INDAM-GNSAGA and PRIN project 20225J97H5.

{\small
 \addcontentsline{toc}{section}{\numberline{}References}
 \bibliography{santilli1}{}
 \bibliographystyle{myalphaurl}
}

\bigskip

{\small \noindent
  Sławomir Kolasiński \\
  Uniwersytet Warszawski, Instytut Matematyki \\
  ul. Banacha 2, 02-097 Warszawa, Poland \\
  \texttt{s.kolasinski@mimuw.edu.pl}
}

\bigskip

{\small \noindent
  Mario Santilli \\
  Department of Information Engineering, Computer Science and Mathematics,\\
  Università degli Studi dell'Aquila\\
  via Vetoio 1, 67100 L’Aquila, Italy\\
  \texttt{mario.santilli@univaq.it}
}

\end{document}